\newtheorem*{rep@theorem}{\rep@title}
\newcommand{\newreptheorem}[2]{%
\newenvironment{rep#1}[1]{%
 \def\rep@title{#2 \ref*{##1}}%
 \begin{rep@theorem}}%
 {\end{rep@theorem}}}
\newtheorem{satz}{Theorem}[section]
\newtheorem{thm}{Theorem}[section]
\newaliascnt{lem}{satz}
\newtheorem{lem}[lem]{Lemma}
\newaliascnt{kor}{satz}
\newtheorem{kor}[kor]{Corollary}
\newaliascnt{prop}{satz}
\newtheorem{prop}[prop]{Proposition}
\newtheorem*{fakt}{Fact}
\theoremstyle{definition}
\newaliascnt{defn}{satz}
\newtheorem{defn}[defn]{Definition}
\newaliascnt{bsp}{satz}
\newtheorem{bsp}[bsp]{Example}
\newaliascnt{algo}{satz}
\newtheorem{algo}[algo]{Algorithm}
\theoremstyle{remark}
\newaliascnt{bem}{satz}
\newtheorem{bem}[bem]{Remark}
\numberwithin{equation}{section}
\newcommand{\swz}{Shou-Wu Zhang}
\DeclareMathOperator{\spec}{Spec}
\DeclareMathOperator{\Spec}{Spec}
\DeclareMathOperator{\quot}{Quot}
\DeclareMathOperator{\Proj}{Proj}
\DeclareMathOperator{\Bl}{Bl}
\DeclareMathOperator{\Div}{div}
\DeclareMathOperator{\CH}{CH}
\DeclareMathOperator{\RK}{\mathscr{R}}
\DeclareMathOperator{\Red}{Red}
\DeclareMathOperator{\Hom}{Hom}
\DeclareMathOperator{\supp}{supp}
\DeclareMathOperator{\mycolim}{colim}
\newcommand{\colim}{\mathop{\mycolim}}
\DeclareMathOperator{\ldeg}{ldeg}
\DeclareMathOperator{\im}{Im}
\newcommand{\mId}{\mathfrak{m}}
\newcommand{\Oo}{\mathcal{O}}
\newcommand{\IZ}{\mathbb{Z}}
\newcommand{\IQ}{\mathbb{Q}}
\newcommand{\IR}{\mathbb{R}}
\newcommand{\IN}{\mathbb{N}}
\newcommand{\IF}{\mathbb{F}}
\newcommand{\Llb}{\mathcal{L}}
\newcommand{\betr}[1]{\left| #1 \right|}
\newcommand{\norm}[1]{\left|\!\left| #1 \right|\!\right|}
\newcommand{\angles}[1]{\langle#1\rangle}
\newcommand{\multiangles}{\angles{\cdot,\ldots,\cdot}}
\newcommand{\floors}[1]{\lfloor#1\rfloor}
\newcommand{\pr}{\mathrm{pr}}    
\newcommand{\KC}{\mathcal{C}}
\newcommand{\cadiv}{\mathrm{CaDiv}}
\newcommand{\sset}{\mathrm{sSet}}
\newcommand{\sms}{\mathcal{C}^0}
\newcommand{\smq}{\mathcal{C}^\infty_\square}
\newcommand{\smd}{\mathcal{C}^\infty_\vartriangle}
\newcommand{\lid}{\mathcal{C}^\mathrm{lin}_\vartriangle}
\newcommand{\ft}{\mathcal{F}}
\newcommand{\Part}{\mathcal{P}}
\newcommand{\charfkt}{\mathbb{1}}
\newcommand{\diag}{\mathcal{D}}
\newcommand{\poset}{\mathrm{Poset}}
\newcommand{\inner}{^{\mathrm{i}}}
\newcommand{\sd}{\mathrm{sd}}
\newcommand{\unt}{\mathrm{sd}}
\newcommand{\Unt}{\mathrm{Sd}}
\newcommand{\einsvek}{(1, \ldots, 1)}
\newcommand{\smallo}[1]{\mathrm{o}(#1)}
\newcommand{\dund}{\Leftrightarrow}
\newcommand{\BIGOP}[1]{\mathop{\mathchoice%
{\raise-0.22em\hbox{\huge $#1$}}%
{\raise-0.05em\hbox{\Large $#1$}}{\hbox{\large $#1$}}{#1}}}
\newcommand{\BIGboxplus}{\mathop{\mathchoice%
{\raise-0.35em\hbox{\huge $\boxplus$}}%
{\raise-0.15em\hbox{\Large $\boxplus$}}{\hbox{\large $\boxplus$}}{\boxplus}}}
\newcommand{\WV}{\mathrm{WV}}
\begin{document}
    \newlength{\drop}

    \title{An Analytic Description of Local Intersection Numbers at Non-Archimedian Places for Products of Semi-Stable Curves}
    \date{\today}
    \author{Johannes Kolb}

    \maketitle
    
    \begin{abstract}
        We generalise a formula of \swz{} \cite[Thm 3.4.2]{zhang},
        which describes local arithmetic intersection numbers of three Cartier divisors
        with support in the special fibre on a a self-product of a semi-stable arithmetic
        surface using elementary analysis. 
        By an approximation argument, Zhang extends his formula to a formula
        for local arithmetic intersection numbers of three adelic metrized line
        bundles on the self-product of a curve with trivial underlying line
        bundle.
        Using the results on intersection theory from \cite{publ1} we generalize
        these results to $d$-fold self-products for arbitrary $d$. 
        For the approximations to converge, we have to assume that $d$ satisfies the vanishing
        condition \cite[4.7]{publ1}, which is true at least for $d\in \{2,3,4,5\}$.
    \end{abstract}

    \tableofcontents

    \section{Introduction} 

\begin{par}
    Let $R$ be a complete discrete valuation ring
    with algebraically closed residue field $k$.
    We denote the quotient field $\quot(R)$ by $K$
    and a uniformizing element with $\pi \in R$.
    Furthermore let $S$ denote the scheme $\spec{R}$ 
    with generic point $\eta$ and special point $s$. 
    Let $X$ be a regular strict semi-stable $S$-scheme. We denote by
    $\cadiv_{X_s}(X)$ the group of Cartier divisors on $X$ with support in the special fibre $X_s$.
    In \cite{publ1} we studied the pairing
    \begin{equation}
        \label{einl-paar}
        \begin{aligned}
            \big(\cadiv_{X_s}(X)\big)^{d+1} &\to \IZ, \\
            (C_0, \ldots, C_d) &\mapsto \ldeg(C_0 \cdot \cdots \cdot C_d).
        \end{aligned}
    \end{equation}
    given by the intersection product and the degree map.
\end{par}
\begin{par}
    Following an idea of Zhang \cite[\S 3]{zhang} we give an analytical description
    of this pairing:
    First note that the group of Cartier divisors $\cadiv_{X_s}(X)$ with support
    in the special fibre coincides with the free abelian group generated 
    by the vertices of $\Gamma(X)$, the reduction graph of $X$. 
    If we endow the graph $\Gamma(X)$ with a metric
    such that each edge has length $1$, we can describe Cartier divisors
    by continuous functions
    $f_C: |\Gamma(X)| \to \IR$, which are affine on each edge 
    and take only values from $\IZ$ on the vertices.
    We denote the set of functions of these type by $\lid(\Gamma(X))$.
\end{par}
\begin{par}
    Thus the intersection pairing induces a bilinear pairing
    \[
        \angles{\cdot,\cdot}: \lid(\Gamma(X)) \times \lid(\Gamma(X)) \to
        \IR,
    \]
    which is uniquely determined by
    \[
        \angles{f_{C_1}, f_{C_2}} = \ldeg(C_1 \cdot C_2).
    \]
    For this pairing we can give an elementary analytic description:
\end{par}
\begin{fakt}
    Let $X$ be a proper regular strict semi-stable curve and
    $f_1, f_2 \in \lid(\Gamma,\IQ)$. 
    Then
    \begin{equation}
        \label{el-paar-einfach}
        \angles{f_1,f_2} = -\int_{\Gamma(X)}(D^1 f_1) (D^1f_2)
    \end{equation}
    holds. 
    Here we endow the edges of $\Gamma(X)$ with an arbitrary orientation 
    and denote by $D^1 f$ the differential of the function $f$ in the direction
    of this orientation.
\end{fakt}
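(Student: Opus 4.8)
The plan is to reduce the identity to pure combinatorics on the reduction graph $\Gamma(X)$ and to recognise the intersection pairing as the negative of the discrete Dirichlet form. Since the pairing $\angles{\cdot,\cdot}$ is bilinear and the right-hand side of \eqref{el-paar-einfach} is $\IQ$-bilinear in $(f_1,f_2)$ as well (the operator $D^1$ is linear and the integral of a product is bilinear), it suffices to verify the formula on a $\IZ$-basis of $\lid(\Gamma(X))$. Such a basis is given by the functions $f_{V_v}$ attached to the irreducible components $V_v$ of the special fibre $X_s$ (the function which is $1$ at the vertex $v$ and $0$ at all other vertices); that is, it is enough to treat $f_i = f_{C_i}$ for honest Cartier divisors $C_1 = \sum_v a_v V_v$ and $C_2 = \sum_w b_w V_w$ with $a_v, b_w \in \IZ$. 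By definition of the pairing the left-hand side then equals $\ldeg(C_1 \cdot C_2) = \sum_{v,w} a_v b_w\, M_{vw}$, where $M_{vw} := \ldeg(V_v \cdot V_w)$ is the intersection matrix, so the entire statement is encoded in $M$.

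First I would compute $M$ geometrically. For $v \neq w$, strict semi-stability forces the components $V_v$ and $V_w$ to meet transversally in the nodes of $X_s$, each node contributing exactly $1$; hence $M_{vw}$ is the number of edges joining $v$ and $w$ in $\Gamma(X)$. For the diagonal entries I would exploit that the whole special fibre $X_s = \Div(\pi) = \sum_v V_v$ is a principal Cartier divisor, so that $\Oo_X(X_s) \cong \Oo_X$ and, using properness of the components over $k$, $\ldeg(X_s \cdot V_v) = 0$ for every $v$. This yields $M_{vv} = -\sum_{w \neq v} M_{vw} = -\deg(v)$, the negative valency of the vertex $v$. Comparing with the combinatorial Laplacian $L = D - A$ of $\Gamma(X)$, whose diagonal records the valencies and whose off-diagonal records the (multiplicities of) edges with a minus sign, gives the clean identity $M = -L$, and therefore $\angles{f_1,f_2} = -\sum_{v,w} a_v L_{vw}\, b_w$.

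It remains to match $\sum_{v,w} a_v L_{vw}\, b_w$ with the integral. Algebraically, the standard rewriting of the Laplacian form gives $\sum_{v,w} a_v L_{vw}\, b_w = \sum_{e = (v,w)} (a_v - a_w)(b_v - b_w)$, the sum running over the edges of $\Gamma(X)$ with either orientation, the summand being orientation-independent. Analytically, each $f_i$ is affine on every edge and the edges have length $1$; hence on the edge $e = (v,w)$ the differential $D^1 f_1$ equals the constant $a_w - a_v$ and $D^1 f_2$ the constant $b_w - b_v$, and integrating their product over the unit interval reproduces precisely the same edge summand. Summing over all edges gives $\int_{\Gamma(X)} (D^1 f_1)(D^1 f_2) = \sum_{v,w} a_v L_{vw}\, b_w$, and combining this with $M = -L$ proves \eqref{el-paar-einfach}.

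The Dirichlet rewriting and the edgewise integral are routine; the genuine content lies in the computation of $M$. I expect the \emph{main obstacle} to be the vanishing of the self-intersection contributions, that is, showing that the local degree map satisfies $\ldeg(X_s \cdot V_v) = 0$. This is exactly where the properness of $X$ over $S$ and the foundational results of \cite{publ1} on the local intersection product are needed, since the relation $M_{vv} = -\deg(v)$ rests entirely on it. By contrast, transversality at the nodes (which fixes the off-diagonal entries) and the orientation-independence of the edge summand are comparatively harmless once the intersection theory is available.
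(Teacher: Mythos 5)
Your proof is correct. The paper states this Fact in the introduction without giving a proof, so there is no argument to compare against; what you write is the standard one, and the two geometric inputs you isolate are exactly the relations the paper later axiomatises in its combinatorial Chow ring: transversality at the nodes gives the monomial relations \cref{schnitt-chw1}, and the vanishing $\ldeg\big(\Div(\pi)\cdot V_v\big)=0$ (triviality of $\Oo_X(X_s)$ plus properness of $V_v$ over $k$) is relation \cref{schnitt-chw2} of \cref{schnitt-chw}. Your identification of the intersection matrix with the negative graph Laplacian followed by the Dirichlet-form rewriting is also precisely how the general formula of \cref{limit-allg} specialises to $d=1$, where the only surviving term has $v_0=v_1=1$ and $\ldeg_{I}(F_1\cdot F_1)=-4$ cancels the prefactor $\tfrac{1}{4}$. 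One minor point of care: in the identity $\sum_{v,w}a_vL_{vw}b_w=\sum_{e=\{v,w\}}(a_v-a_w)(b_v-b_w)$ each geometric edge must be counted exactly once — as you note the summand is orientation-independent, so any choice of orientation works, but summing over both orientations would introduce a spurious factor of $2$.
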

\begin{par}
    Using approximation arguments we may continue \cref{el-paar-einfach} to a pairing
    of piecewise smooth functions given by the same equation (note that the differential $D^1$
    of piecewise smooth functions is defined almost everywhere, therefore we may still integrate).
    The aim of this paper is to give a generalisation of this description 
    for higher-dimensional schemes.
\end{par}
\begin{par}
    We restrict ourselves to $d$-fold self-products of a smooth curve,
    since we can describe a regular strict semi-stable model explicitly
    for these $K$-schemes:
    If $X_\eta$ is a smooth curve over $K$,
    then the semi-stable reduction theorem yields a semi-stable model of $X$ 
    (possibly after base change). 
    The desingularisation of Gross and Schoen
    yields a uniquely defined regular strict semi-stable scheme $W$ of $X^d$ (see \cite[Thm 3.3]{publ1}).
\end{par}
\begin{par}
    We use as generalisation of the reduction graph
    the geometric realization of the simplicial reduction set
    $|\RK(W)|=|\Gamma(X)|^d$; 
    this is a local affine space, which encodes 
    the incidence relations between the
    components of $W_s$.
    Each Cartier divisor $C \in \cadiv_{W_s}(W)$ with support in $W_s$ 
    is a model of the trivial line bundle, induces therefore a metric on the trivial line bundle,
    which corresponds to a piecewise affine function
    \[
        f_C: |\Gamma(X)|^d \to \IR.
    \]
    We denote the set of these piecewise affine functions by $\lid(\Gamma(X)^d)$.
    The intersection pairing \cref{einl-paar} thus
    induces a multi-linear pairing between piecewise affine functions
    \[
        \multiangles: \Big(\lid(\RK(W))\Big)^{d+1} \to \IR,
    \]
    defined by
    $\angles{f_{C_0}, \ldots, f_{C_d}}=\ldeg(C_0 \cdot \cdots \cdot C_d)$.
\end{par}
\begin{par}
    We see this pairing as the local contribution to the intersection product of metrized line bundles 
    with underlying trivial line bundeles. 
    We want to extend this pairing to a larger class of metrized line bundles.
    By approximation we may continue $\multiangles$ 
    on the set of piecewise smooth functions, $\smd(\Gamma(X)^d)$
    and give an analytical formula for this pairing,
    if a certain vanishing condition holds. 
    This vanishing condition only depends on the positive integer $d$ 
    and can be verified explicitly in the cases $d=2,3$. 
    For a piece-wise smooth function $f \in \smd(\Gamma(X)^d)$ let 
    $f^{(1)},f^{(2)}, \ldots$ denote a certain approximation by piece-wise affine functions
    (see \cref{limit-approx-def}). 
\end{par}
\begin{repsatz}{limit-allg}
    If $d \in \IN$ satisfies the vanishing condition of \cref{limit-konv-bed},
    then for all functions
    $f_0, \ldots, f_d \in \smd(\Gamma^d)$ the limit
    \[ \angles{f_0, \ldots, f_d} := 
        \lim_{n \to \infty} \angles{f_0^{(n)}, \ldots, f_n^{(n)}}_{W,n} \]
    exists.
    It can be calculated by
    \begin{equation}
        \label{einl-haupt-eq}
        \angles{f_0, \ldots, f_d} = 
        \sum_{\Part \textrm{ Partition}}
        \frac{1}{2^{d+|\Part|}}
        \sum_{\substack{
                v_0, \ldots, v_d \in \IF_2^d,\\ 
                \sum \alpha(v_i, \Part) = d+|\Part|
            }}
        \ldeg_{I^d}(\prod_{i=0}^d F_{v_i})
        \int_{\Delta_{\Part}} \prod_{i=0}^d D^{v_i}_{\alpha(v_i,\Part)}(f_i).
    \end{equation}
\end{repsatz}
\begin{par}
    In this equation the terms $D^{v_i}_{\alpha(v_i,\Part)}(f_i)$
    are elementary analytical expressions in the functions $f_i$.
    The coefficients of the integrals $\ldeg_{I^d}\Big(\prod_i F_{v_i}\Big)$ 
    are independent of $X$ and can 
    be calculated using the simplicial calculus of \cite[4.3]{publ1}.
\end{par}
\begin{par}
    The case $d=2$ was already proven by Zhang in \cite[Prop 3.3.1, Prop 3.4.1]{zhang} 
    for a variant of the Gross-Schoen-desingularization.
    Our proof follows essentially the proof of Zhang, but adds 
    new ideas to the proof:
    First of all we use the original method of Gross-Schoen \cite{gross} for desingularization. 
    This requires more technical effort for the description of the special fibre,
    but gives a model with a simpler structure.
\end{par}
\begin{par}
    In \cite[Def 4.22, Prop 4.23]{publ1} we developed a localization argument 
    which reduces the computation of intersection to a simple local situation. 
    It reduces in fact the computation of intersection numbers to 
    $\left(\spec{R[x_0,x_1]/(x_0 x_1 - \pi)}\right)^d$, a simple standard-scheme 
    which is independent of $X$.
\end{par}
\begin{par}
    For the derivation of \cref{limit-allg} we have to calculate 
    intersection numbers of divisors which correspond to certain vertices of $\Gamma(X)^d$.
    The vertices serve as nodes for the approximation of piecewise
    smooth functions $f_0, \ldots, f_d \in \smd(\Gamma(X)^d)$
    from \cref{einl-haupt-eq}.
    To get the limit in \cref{limit-allg}, Zhang uses an laborious investigation.
    The resulting formula does not contain the intersection numbers calculated in the first place,
    which makes the generalisation difficult.
    We are able to simplify the argument by using a Fourier transform. 
    This explains the terms $\ft(f_i)$, which appear in the calculation.
    An elementary argument allows us to show that these Fourier transforms
    converge to the generalised differentials $D^{x}_y(f)$.
\end{par}
\begin{par}
    Contrary to the method of Zhang we are able to explain the coefficients
    of the integrals as intersection numbers
    $\ldeg(F_{v_0} \cdot \cdots \cdot F_{v_d})$
    of certain divisors $F_v$ in the standard situation $I^d$.
\end{par}
\begin{par}
    Especially in the case $d=3$ the intersection numbers
    of the $F_v$ can be calculated completely,
    so it is possible to give an explicite description
    of the pairing in this case.  
    In order to simplify the exposition in the introduction,
    we restrict to the special case of functions which are smooth on each cube of $\Gamma^3$,
    a set denote by $\smq(\Gamma^3)$.
\end{par}
\begin{repsatz}{limit-spez-3}
    Let $f_0, \ldots, f_3 \in \smq(\Gamma^3)$ be functions smooth on cubes.
    Then the limit of the quadruple pairing $\angles{f_0,f_1,f_2,f_3}$ 
    exists and can be calculated as
    \begin{align*}
        \lim_{n \to \infty} \angles{f_0^{(n)}, \ldots, f_3^{(n)}} 
        & = \int_{\Gamma^3} 
        \sum_{\substack{
                v_0,v_1,v_2,v_3 \in \IF_2 \\
                \{v_0,v_1,v_2,v_3\} \in B
            }}
        \prod_{i=0}^dD^{v_i}_{|v_i|}(f_i),
    \end{align*}
    where the set $B \subset \mathcal{P}(\IF_2^3)$ is defined as follows
    \begin{align*}
        B:= \Big\{
            &\{(1,0,0), (0,1,0), (0,0,1), (1,1,1)\}, \\
            &\{(1,0,0), (0,1,0), (1,0,1), (0,1,1)\}, \\
            &\{(1,0,0), (0,0,1), (1,1,0), (0,1,1)\}, \\
            &\{(0,1,0), (0,0,1), (1,1,0), (1,0,1)\} \Big\}.
    \end{align*}
\end{repsatz}
\begin{par}
    This investigation can be seen as a first step to an analytical description
    of local arithmetic intersection numbers at non-archimedean places
    in Arakelov theory. 
    For this interpretation of \cref{limit-allg} let $(X_\eta)^{\textrm{an}}$ denote
    the Berkovich analytification of $X_\eta$. 
    By a canonical construction the model $X$ yields a skeleton of $(X_\eta)^{\textrm{an}}$
    which coincides with the geometric realization of $\RK(W)$.
    Thus \cref{limit-allg} gives an intersection number for functions on $(X_\eta)^{\textrm{an}}$,
    if these functions are induced by metrics on the trivial line bundle on $X_\eta$.
\end{par}



\section{Metrised Line Bundles and The Reduction Map}
\label{metr}

\begin{par}
    Let $R$ be a complete discrete valuation ring with uniformizer $\pi$, whose
    residue field $k:=R/(\pi)$ is algebraically closed.
    Let $S:=\Spec{R} = \{\eta, s\}$ be the corresponding spectrum with 
    generic point $\eta$ and special point $s$.
    Let $X$ be a regular strict semi-stable scheme, i.e. a regular scheme $X$, whose 
    generic fibre $X_\eta$ is smooth and whose special fibre is a reduced divisor 
    with strict normal crossings.
    After choosing a total ordering $\leq$ on $X_s^{(0)}$, the components of $X_s$ we can 
    define a directed reduction graph $\Gamma(X)$ and
    a well-defined Gross-Schoen desingularization of $X^d$ by the following algorithm:
\end{par}
\begin{algo}
    \label{desi-prodkomp}
    Let $d \in \IN$ and $X$ be a regular strict semi-stable $S$-curve with total ordering $\leq$
    on $X_s^{(0)}$ and $\Gamma(X)$ be a simplicial set without multiple simplices.
    We denote the product by $W_0 := X^d$. Since the components of $X_s$ are 
    geometrically integral, we can describe the irreducible components of $(W_0)_s$
    as product
    \[
        (W_0)_s^{(0)} = X_s^{(0)} \times \cdots \times X_s^{(0)}.
    \]
    We endow this product $(W_0)_s^{(0)}$ with the lexicographical order and denote the
    elements in ascending order $B_1, \ldots B_k$.
    Now denote by $B'_1$ the irreducible component $B_1$ endowed with the induced reduced
    structure and
    set $W_1:=\Bl_{B'_1}(W_0)$.
    Inductively let $B'_i \subseteq W_i$ be the strict transform of the
    irreducible component $B_i$ endowed with the induced reduced structure
    and set $W_{i+1}:=\Bl_{B'_i}(W_i)$.
    The last scheme in this chain $W_{k}$ is also denoted by
    $W(X,\leq,d):=W_k$.
    These blowups introduce no new components in the special fibre $(W_k)_s$,
    so the lexicographical ordering on $(W_0)_s^{(0)}$ induces also 
    a total ordering on $W(X,\leq,d)$.
\end{algo}
\begin{par}
    We proved in \cite[Thm 3.3]{publ1}:
\end{par}
\begin{thm}
    The scheme $W:=W(X,\leq,d)$ is a regular strict semi-stable $R$-scheme 
    and for the simplicial reduction set the equation
    \[
        \RK(W) = \Gamma(X)^d
    \]
    holds.
\end{thm}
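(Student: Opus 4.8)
The proof is \'etale-local and ultimately combinatorial: regularity, strict semi-stability and the simplicial reduction set $\RK$ are all \'etale-local, so it suffices to analyse \cref{desi-prodkomp} on a standard local model. A closed point of $(X^d)_s$ lies over a tuple of closed points of $X_s$, each of which is either a smooth point or a node; since $X$ is strict semi-stable, \'etale-locally $X$ is $\Spec R[x_0,x_1]/(x_0x_1-\pi)$ at a node and $\IA^1_R$ at a smooth point. Writing $e$ for the number of factors sitting at a node, the first step is to reduce to the model
\[
    Z_e := \Spec R[x_{i,0},x_{i,1} : 1\le i\le e]/(x_{i,0}x_{i,1}-\pi : 1\le i\le e),
\]
the remaining $d-e$ smooth factors contributing a harmless $\IA^{d-e}_R$ that affects neither regularity nor the incidence combinatorics.

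Next I would read off the component structure of $(Z_e)_s$ and run the blowup sequence. The irreducible components of $(Z_e)_s$ are indexed by $\varepsilon\in\IF_2^e$, the component $B_\varepsilon$ being cut out by $x_{i,\varepsilon_i}=0$ for all $i$; for $e\ge 2$ the scheme $Z_e$ is singular along its deepest stratum (already $Z_2$ carries a threefold ordinary double point at the origin, since there $x_{1,0}x_{1,1}=x_{2,0}x_{2,1}$). Blowing up the reduced strict transforms of the $B_\varepsilon$ in the lexicographic order of \cref{desi-prodkomp}, I would prove by induction on the number of blowups that: (a) the centre is regular at each stage; (b) each blowup is covered by explicit toric charts in which the total transform of the special fibre is a normal crossings divisor; and (c) no new irreducible component of the special fibre is created, the exceptional loci meeting the special fibre only along components already present. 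The lexicographic order is exactly what forces (a): once the components blown up earlier have been separated, the current centre meets the remaining strata transversally. After all $2^e$ blowups the local model is regular with strict normal crossings special fibre, and this globalises to the first assertion.

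For the identification $\RK(W)=\Gamma(X)^d$ I would use that, for a strict semi-stable scheme, $\RK(W)$ is the dual complex of $W_s$, whose nondegenerate $k$-simplices are the connected components of the $(k+1)$-fold intersections of pairwise distinct components. By (c) the vertices are still precisely the $B_\varepsilon$, that is, the $2^e$ corners of the cube $[0,1]^e=|\Delta^1|^e$; the blowups add no new vertices but force these corner components to meet in the simplicial, rather than cubical, pattern dictated by the resolved charts. The heart of the matter is that this pattern is exactly the shuffle triangulation of $[0,1]^e$ into $e!$ top-dimensional simplices, which is the geometric realisation of the product simplicial set $(\Delta^1)^e$. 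Globalising over all points of $X^d$ and using that $\Gamma(X)$ is the dual graph of $X_s$, these triangulated cubes glue to $|\Gamma(X)|^d$ carrying precisely the product simplicial structure, which gives $\RK(W)=\Gamma(X)^d$.

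The main obstacle is the inductive local analysis of the blowups, above all verifying (a)--(c) uniformly in $e$ and checking that no exceptional divisor is ever promoted to a new component of the special fibre. On the combinatorial side the crux is the same point seen through the dual complex: one must show that the successive blowups impose precisely the shuffle triangulation and neither a coarser nor a finer subdivision. This is exactly where the lexicographic order is essential; a careless choice of blow-up order would either fail to resolve the singularities or introduce spurious components, destroying the clean identification with $\Gamma(X)^d$.
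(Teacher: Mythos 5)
First, a point of comparison: the paper does not actually prove this theorem here --- it is quoted from the companion paper (\cite[Thm 3.3]{publ1}), so there is no in-text argument to measure your proposal against. What can be said is that your outline follows the standard Gross--Schoen strategy that \cite{publ1} is based on: the \'etale-local reduction to $Z_e\times\IA^{d-e}_R$, the indexing of the components of $(Z_e)_s$ by $\IF_2^e$, and the identification of the resulting dual complex with the shuffle triangulation of the cube (which is exactly the content of \cref{sk-simpl-perm}) are all the right ingredients.

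As written, however, this is a roadmap rather than a proof. All of the mathematical content sits inside the induction you announce with ``I would prove by induction \ldots (a)--(c)'', and none of it is carried out: you never write down the charts of a single blowup of $Z_e$, never verify that the strict transforms of the remaining $B_\varepsilon$ stay regular and meet the new strata transversally, and never show that each exceptional locus has codimension at least two --- which is the actual reason no new component of the special fibre appears, the centres being Weil divisors that fail to be Cartier only along the deeper strata, so that each blowup is a small modification there. Likewise the ``heart of the matter'', that the incidence pattern after all blowups is precisely the shuffle triangulation and hence realises $\Delta[1]^e$, is asserted rather than read off from the charts. Finally, your account of the role of the lexicographic order is off: blowing up the $2^e$ components in \emph{any} order resolves $Z_e$ (the centres are regular regardless, being strict transforms of products of smooth curves), but different orders yield different small resolutions, i.e.\ different triangulations of the cube. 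The order fixed in \cref{desi-prodkomp} is what makes $W$ well defined and forces the resulting triangulation to be the one realising the product simplicial set $\Gamma(X)^d$, rather than some other triangulation of $|\Gamma(X)|^d$; it is needed for the second assertion of the theorem, not for regularity or for avoiding spurious components.
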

\begin{par}
    This desingularization
    is denoted by $W$ and is a regular strict semi-stable scheme 
    according to the definition of de Jong \cite{deJong}, 
    which means $W_\eta$ is smooth, $W_s$ is regular and the components 
    of $W_s$ intersect proper and with multiplicity $1$
    (for details see \cite[Prop 4.8]{publ1}).
\end{par}
\begin{par}
    We are about to describe the intersection pairing as paring of functions on the
    reduction set.
    The relation between these functions and the Chow group $\CH^1_{W_s}(W)$ 
    is best described using metrics on line bundles.
\end{par}
\begin{par}
    Thus we repeat the definition of metrized line bundles on a complete discrete valued field 
    according to \cite{zsmpt}. In particular we have to deal with metrics 
    induced by models of the trivial line bundle. These metrics are in connection 
    with the irreducible components of the special fibre.
    Using metrics we find an alternative description of the reduction map
    $\Red:W(\bar K) \to |\RK(W)|$.
    Eventually this allows us to define a bijection between cycles on $W$ and functions on the special
    fibre.
\end{par}

\subsection{Metrics on Line Bundles} 

\begin{par}
    Let $R$ be a complete discrete valuation ring with algebraically closed residue field
    and $\betr{\cdot}$ a norm on its quotient field $\quot{R}$.
    In most cases we normalise $\betr{\cdot}$ by setting $\betr{\pi}=1/b$ 
    for a fixed basis $b \in \IR_{>0}$.
\end{par}
\begin{par}
    We denote by $\bar K$ an algebraic closure of $K$ and 
    continue the norm $\betr{\cdot}$ on $\bar K$.
    Since $R$ is complete the completion is unique. 
    We denote by $R_{\bar K}$ the ring of integers
    $\{a \in \bar K \mid \betr{a} \leq 1\}$ in $\bar K$.
\end{par}

\begin{defn}
    Let $W_K$ be a $K$-scheme and $L$ a line bundle on $W_K$.
    Let $x \in W_K(\bar K)$ be a $\bar K$-rational point.
    The global sections of $x^{*}(L)$ are called 
    \emph{geometric fiber} of $L$ in $x$ and denoted by
    $L(x):= \Gamma(\Spec{\bar K}, x^{*}(L))$.
    A family of morphisms in the geometric fibres
    $(\norm{\cdot}_x: L(x) \to \IR)_{x \in W_K(\bar K)}$ 
    is called \emph{metric on $L$}, if
    the map
    $\norm{\cdot}_x$ is a $(\bar K, \betr{\cdot})$-norm
    for each $x \in W_k(\bar K)$.
\end{defn}
    
\begin{par}
    Important metrics are given by models of the line bundle $L$ (see \cite[(1.1)]{zsmpt}): 
\end{par}
\begin{defn}
    \label{metr-def-modell}
    Let $W$ be a proper $S$-scheme and $L$ a line bundle on $W_\eta$.
    Let $\Llb$ be a line bundle on $W$, which is a model of $L$ 
    by the isomorphism $\varphi: \Llb_{\eta} \to L$.
    Then there is a metric $\norm{\cdot}_x$ on $L$ defined 
    for a geometric point $x \in W(\bar K)$ and an element
    of the geometric fibre $l \in L(x)$ as follows:
    Let $\tilde x: \bar S \to W$ be the unique continuation 
    of $x: \bar K \to W$ by the valuative criterion of properness
    and let $\tilde \varphi$ be the canonical isomorphism induced by $\varphi$:
    \[ 
        \tilde\varphi: \tilde x^*\Llb (\tilde S_\eta) \to x^*L(\Spec{\bar K})
    \]
    Localization gives a canonical injection of the $\bar R$-module
    $\Gamma(\bar S, \Llb)$ into the $\bar K$-module $\Gamma(\bar S_{\bar\eta}, \Llb)$,
    which allows us to identify $\Gamma(\bar S, \Llb)$ with a subset of 
    $\Gamma(\bar S_{\bar \eta}, \Llb)$. We set
    \[
        \norm{l}_x:= \inf_{a \in \bar K^{\times}}\left(|a| \big| a^{-1} \tilde \varphi^{-1}(l) 
            \in \Gamma(\bar S, \Llb)\right).
    \]
\end{defn}

\begin{bem}
    \label{metr-div-modell}
    Let $W$ be a proper $S$-scheme and $D$ a Cartier divisor with support
    $\supp{D} \subseteq W_s$ in the special fibre of $W$.
    Then the line bundle $\Oo_W(D)$ can be regarded as model of the
    trivial line bundle $\Oo_{W_\eta}$ by the isomorphism
    \[
        \varphi_D: \Oo_W(D)\mid_{W_\eta} \xrightarrow{\sim} \Oo_W\mid_{W_\eta},
    \]
    which maps the canonical section $s_D \in \Oo_W(D)(W_\eta)$ onto 
    $1 \in \Oo_W\mid_{W_\eta}$.
\end{bem}

\begin{par}
    In this special case the metrics are given by the values of the one-section,
    thus by a function. We will show, that this function plays the role of a coordinate function:
\end{par}
\begin{defn}
    \label{metr-koord-fkt}
    Let $\betr{\cdot}$ be a norm on $\bar K$,
    $W$ a proper $S$-scheme and $D$ a Cartier divisor on $W$ with 
    $\supp{D} \subseteq W_s$.
    The model $(\Oo_W(D), \varphi_D)$ of the trivial line bundle
    induced by $D$ yields by \cref{metr-def-modell} a metric
    $\norm{\cdot}_{\Oo_W(D)}$ on $\Oo_{W_{\eta}}$.
    We evaluate this metric on the one section $1 \in \Gamma(W_\eta, \Oo_{W_\eta})$ 
    to get a function
    \[ f^{\betr{\cdot}}_D: W_{\eta}(\bar K) \to \IR, x \mapsto - \log_b(\norm{1}_{\Oo_W(D),x}). \] 
    It is called the
    \emph{tropical coordinate function} of the divisor $D$.
    If the norm is evident by the context we will denote the coordinate function also
    by $f_D$.
\end{defn}

\begin{prop}
    \label{metr-koord-lokal}
    Let $W$ be a proper $S$-scheme and $D$ a Cartier divisor on $W$ with support in $W$.
    Let $x \in W(\bar K)$ be a $\bar K$-rational point and $U \subseteq W$ an open subset,
    in which $x$ specialises and the Cartier divisor $D$ is given by a rational function 
    $f \in \mathcal{K}_W(U)$.
    Then $f\mid_{U_\eta} \in \Oo_W(U_\eta)$ holds and we have
    \[
        f_{D}(x) = -\log_b\left(\betr{x^{\#}\left(f\big|_{U_\eta}\right)}\right),
    \]
    where $x^{\#}$ denotes the canonical map
    $x^{\#}: \Oo_W(U_\eta) \to \bar K$.
\end{prop}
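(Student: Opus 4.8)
The plan is to unwind the two-layered definition of $f_D$ and reduce everything to a one-dimensional computation over the valuation ring $R_{\bar K}$, using crucially that $x$ specialises in $U$. First I would settle the regularity claim: since $D$ is cut out by $f$ on $U$ and $\supp D \subseteq W_s$, the divisor $\Div(f|_{U_\eta})$ on the smooth — hence normal — scheme $U_\eta = U \cap W_\eta$ is trivial; as a rational function with empty divisor on a normal scheme, $f|_{U_\eta}$ is a nowhere-vanishing regular function, so $f|_{U_\eta} \in \Oo_W(U_\eta)^\times$. This already gives $f|_{U_\eta} \in \Oo_W(U_\eta)$ and shows that $c := x^\#(f|_{U_\eta}) \in \bar K^\times$, so that the logarithm below is well defined.

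Next I would record the local picture of $\Oo_W(D)$. On $U$ the invertible sheaf $\Oo_W(D)$ is the subsheaf of $\mathcal{K}_W$ generated by $f^{-1}$, and the canonical section $s_D$ is the constant rational function $1$; with respect to the generator $f^{-1}$ it therefore has coefficient $f$, i.e.\ $s_D|_U = f \cdot f^{-1}$. Since $x$ specialises in the open set $U$ and $\bar S$ has $\bar s$ as its unique closed point, the extension $\tilde x \colon \bar S \to W$ of \cref{metr-def-modell} factors through $U$; pulling back the generator $f^{-1}$ yields a generator $e := \tilde x^*(f^{-1})$ of the free rank-one $R_{\bar K}$-module $\Gamma(\bar S, \tilde x^*\Oo_W(D))$.

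The core of the argument is then to evaluate the model metric of \cref{metr-def-modell} on the section $1 = \varphi_D(s_D)$. Under the canonical isomorphism $\tilde\varphi$ we have $\tilde\varphi^{-1}(1) = \tilde x^*(s_D)|_{\bar\eta}$, and by the previous step this equals $c \cdot e|_{\bar\eta}$ with $c = x^\#(f|_{U_\eta})$. The defining infimum then reads $\norm{1}_x = \inf\{|a| : a^{-1} c \cdot e|_{\bar\eta} \in R_{\bar K}\cdot e\}$; the membership condition is simply $|a^{-1}c| \le 1$, i.e.\ $|a| \ge |c|$, so the infimum is attained and equals $|c|$. Hence $\norm{1}_x = |c| = |x^\#(f|_{U_\eta})|$, and applying $-\log_b$ gives the asserted formula by \cref{metr-koord-fkt}.

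I expect the main obstacle to be bookkeeping rather than any genuine difficulty. The one point that must be handled with care is that the generic-fibre restriction of the rational section $s_D = 1$ corresponds to the scalar $c = x^\#(f|_{U_\eta})$ under the trivialisation by $f^{-1}$ — this is exactly where the convention $\Oo_W(D)|_U = \Oo_U \cdot f^{-1}$ from \cref{metr-div-modell} enters, and a different convention would flip the sign. Secondly, one must justify that $\tilde x$ factors through $U$ so that $f^{-1}$ may be used as an \emph{integral} generator over all of $\bar S$, and not merely as a generator of the generic fibre; this is precisely the role of the hypothesis that $x$ specialises in $U$.
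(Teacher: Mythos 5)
Your proposal is correct and follows essentially the same route as the paper's proof: trivialise $\Oo_W(D)$ on $U$, observe that the model isomorphism sends the section represented by $f$ to the one-section, pull back along the extension $\tilde x:\bar S\to U$ (which exists because $x$ specialises into $U$), and evaluate the defining infimum to get $\norm{1}_x=\betr{x^{\#}(f|_{U_\eta})}$. One small remark: your appeal to smoothness and normality of $U_\eta$ to show $f|_{U_\eta}\in\Oo_W(U_\eta)^\times$ is not covered by the hypotheses (the proposition only assumes $W$ is a proper $S$-scheme), but it is also unnecessary — the condition $\supp(D)\cap U_\eta=\emptyset$ already means by definition that the local equation $f$ is a unit in every local ring $\Oo_{U_\eta,y}$, hence a unit on $U_\eta$.
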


\begin{proof}
    \begin{par}
        Since the support of $D$ is outside of $U_\eta$,
        we have $f\mid_{U_\eta} \in \Oo_W(U_\eta)$.
        Furthermore $D$ is a principal divisor on $U$ and
        therefore allows us to identify $\Oo_U$ with $\Oo_U(D)$.
        Note that this is nevertheless a non-trivial model 
        of $\Oo_{U_\eta}$, 
        since the model morphism 
        $\varphi: \Oo_U(D) \mid_{U_\eta} \to \Oo_{U_\eta}$
        maps the global section $f \in \Gamma(U_\eta, \Oo_U(D))$ 
        onto $1 \in \Gamma(U_\eta, \Oo_{U_\eta})$.
    \end{par}
    \begin{par}
        Denote by $\tilde x: \bar S \to U$ the 
        continuation of $x$ as before.
        Then the morphism 
        $\tilde x^*\Oo_U(D)(U) \to \tilde x^*\Oo_U(D)(U_\eta)$
        is just the injection 
        $\bar R \to \bar K$.
        The section $1 \in \Oo_{\bar S}(\bar S_\eta)$ is mapped 
        by the model morphism on $x^{\#}(f) \in \bar K$.
        We therefore get
        \[
            f_D(x) = -\log_b(\norm{1}_{\Oo_W(D),x})
            = -\log_b(\inf_{a \in K^\times}\{\betr{a} \mid a^{-1}x^{\#}(f) \in R\})
            = -\log_b(\betr{x^{\#}(f)}).
        \]
    \end{par}
\end{proof}

\begin{prop}
    \label{metr-koordfkt-additiv}
    The coordinate function of the trivial Cartier divisors is the zero function.
    Let $D_1, D_2$ be Cartier divisors on $W$ with $\supp{D_i} \subseteq W_s$.
    Then 
    $f_{D_1+D_2} = f_{D_1}+f_{D_2}$
    holds.
\end{prop}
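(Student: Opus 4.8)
The plan is to reduce both assertions to the local formula of \cref{metr-koord-lokal}, which already expresses the tropical coordinate function of a divisor through a local defining equation and the multiplicative norm on $\bar K$. For the first assertion, observe that a trivial Cartier divisor is represented on every open subset by the unit rational function $1 \in \mathcal{K}_W(U)$; its support is certainly contained in $W_s$. Applying \cref{metr-koord-lokal} with this equation, and using $x^{\#}(1)=1$ together with $\betr{1}=1$, gives $f_D(x) = -\log_b(1) = 0$ for every $x \in W_\eta(\bar K)$, so the coordinate function is identically zero.

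For the additivity statement I would fix an arbitrary point $x \in W_\eta(\bar K)$ and let $w \in W_s$ be the point in which $x$ specialises, that is, the image of the closed point under the extension $\tilde x\colon \bar S \to W$ provided by the valuative criterion of properness. Since Cartier divisors are locally principal, I can choose open neighbourhoods $U_1, U_2 \subseteq W$ of $w$ on which $D_1$ and $D_2$ are given by rational functions $f_1 \in \mathcal{K}_W(U_1)$ and $f_2 \in \mathcal{K}_W(U_2)$. Passing to $U := U_1 \cap U_2$, which still contains $w$, the divisor $D_1 + D_2$ is given on $U$ by the product $f_1 f_2$, because the group law on Cartier divisors corresponds to multiplying local equations. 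As $\supp{D_i} \subseteq W_s$ is disjoint from $U_\eta$, the hypotheses of \cref{metr-koord-lokal} hold for $D_1$, $D_2$ and $D_1+D_2$ with the same $U$ and the same $x$.

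It then remains to invoke \cref{metr-koord-lokal} three times and to exploit that $x^{\#}$ is a ring homomorphism, that $\betr{\cdot}$ is multiplicative, and that $\log_b$ converts products into sums:
\[
  f_{D_1+D_2}(x) = -\log_b\bigl(\betr{x^{\#}(f_1 f_2)}\bigr)
  = -\log_b\bigl(\betr{x^{\#}(f_1)}\bigr) - \log_b\bigl(\betr{x^{\#}(f_2)}\bigr)
  = f_{D_1}(x) + f_{D_2}(x).
\]
Since $x$ was arbitrary, this yields $f_{D_1+D_2} = f_{D_1} + f_{D_2}$ on all of $W_\eta(\bar K)$.

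The only genuinely delicate point — and the step I would verify most carefully — is the simultaneous choice of the open set $U$: one must ensure that a single neighbourhood of the specialisation point $w$ both contains the point in which $x$ specialises and carries local equations for $D_1$, $D_2$ and their sum at once, so that \cref{metr-koord-lokal} is applicable uniformly. This is immediate after intersecting $U_1$ and $U_2$, both of which contain $w$, and after restricting the chosen local equations to $U$; no loss of generality arises. Everything else is the multiplicativity of the norm and the functoriality of $x^{\#}$, which require no further work.
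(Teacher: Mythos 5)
Your argument is correct and is essentially the paper's own proof, which likewise reduces to a trivialising neighbourhood of the specialisation point and applies \cref{metr-koord-lokal}; you have merely written out the details (the unit local equation for the trivial divisor, the product of local equations for $D_1+D_2$, and the multiplicativity of the norm) that the paper leaves implicit. No changes needed.
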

\begin{proof}
    For each point $x \in W(R_{\bar K})$ it suffices to examine 
    a neighbourhood $U$ of $x$ which trivialises $D$. In $U$ the
    claim follows directly from \cref{metr-koord-lokal}.
\end{proof}

\begin{par}
    Coordinate functions are compatible with base change:
\end{par}

\begin{prop}
    \label{metr-koordfkt-funktor}
    Let $W,V$ be proper integral $S$-schemes,
    $g: W \to V$ a dominant morphism and $D$
    a Cartier divisor on $V$.
    Acoording to \cite[Prop 11.48]{goertz} exists a
    well-defined Cartier divisor $g^*D$.
    Then for each point $x \in W(\bar K)$
    the equation
    \[ f_D(g(x)) = f_{\varphi^*(D)}(x). \]
    holds.
\end{prop}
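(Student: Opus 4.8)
The plan is to reduce both coordinate functions to the local formula of \cref{metr-koord-lokal} and then to invoke the functoriality $(g\circ x)^{\#}=x^{\#}\circ g^{\#}$ of the structure-sheaf maps. Since the coordinate functions $f_D$ and $f_{g^{*}D}$ are defined, $D$ has support in the special fibre $V_s$; as $g$ is an $S$-morphism we have $g^{-1}(V_s)=W_s$, so $g^{*}D$ has support in $W_s$ and $f_{g^{*}D}$ makes sense.

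First I would fix $x\in W(\bar K)$ and analyse its reduction. Viewing $x$ as a morphism $\Spec\bar K\to W$, properness of $W$ gives a unique extension $\tilde x\colon\bar S\to W$. Then $g\circ\tilde x$ extends $g(x)=g\circ x$, and by the uniqueness in the valuative criterion it is \emph{the} extension of $g(x)$; hence the reduction of $g(x)$ equals the image under $g$ of the reduction of $x$. I would then pick an open $U\subseteq V$ containing the reduction of $g(x)$ on which $D$ is principal, $D=\Div(f)$ with $f\in\mathcal{K}_V(U)$. By the previous observation the open set $g^{-1}(U)\subseteq W$ contains the reduction of $x$.

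Next I would describe $g^{*}D$ locally. By \cite[Prop 11.48]{goertz} the dominant morphism $g$ pulls $D$ back to a well-defined Cartier divisor, which on $g^{-1}(U)$ is cut out by $g^{\#}(f)=f\circ g\in\mathcal{K}_W(g^{-1}U)$. Applying \cref{metr-koord-lokal} to $g(x)$ and to $x$ respectively gives
\[ f_D(g(x)) = -\log_b\bigl(\betr{(g(x))^{\#}(f|_{U_\eta})}\bigr),\qquad f_{g^{*}D}(x) = -\log_b\bigl(\betr{x^{\#}((g^{\#}f)|_{(g^{-1}U)_\eta})}\bigr). \]
Since $g(x)$ is by definition the composite $g\circ x$, functoriality yields $(g(x))^{\#}=x^{\#}\circ g^{\#}$, so the two arguments of the norm agree and the claimed equality follows.

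The main obstacle is the compatibility of reduction with $g$ in the first step, i.e. that the reduction of $x$ lands in $g^{-1}(U)$; this is exactly where properness and the uniqueness of the extension enter. Everything afterwards is the formal identity $(g\circ x)^{\#}=x^{\#}\circ g^{\#}$ combined with \cref{metr-koord-lokal}, so I expect no further difficulty.
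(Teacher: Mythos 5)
Your proposal is correct and follows essentially the same route as the paper: extend $x$ to $\tilde x\colon\bar S\to W$, note $g\circ\tilde x$ is the extension of $g(x)$, pass to a trivialising neighbourhood where $D=\Div(f)$, represent $g^{*}D$ by $g^{\#}(f)$, and conclude from \cref{metr-koord-lokal} together with $\tilde y^{\#}(f)=\tilde x^{\#}(g^{\#}(f))$. Your added care about why the reduction of $x$ lands in $g^{-1}(U)$ is a useful elaboration of the paper's terser ``it suffices to prove the claim in an open neighbourhood of $\tilde x$,'' but it is not a different argument.
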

\begin{proof}
    Let $x \in W(\bar K)$ a $\bar K$-rational point and
    $\tilde x: \bar S \to W$ its continuation on $\bar S$.
    We denote its image under $g$ by $\tilde y:= g \circ \tilde x$.
    As above it suffices to proof the claim in an open neighbourhood
    of $\tilde x$. 
    Thus we may assume that the Cartier divisor $D$ is given on $V$ by
    a rational function $f \in \mathcal{K}_V(V)$. 
    By definition $g^* D$ is represented by $g^* f$ and the claim 
    is a consequence of
    $\tilde y^{\#}(f) = \tilde x^{\#}(g^{\#}(f))$ 
    and \cref{metr-koord-lokal}.
\end{proof}

\begin{bsp}
    \begin{par}
        Let $W$ be a proper $S$-scheme $U \subseteq W$ an affine open subset
        with a dominant morphism
        $f: U \to L = \Spec{R[z_0,z_1]/(z_0z_1 - \pi)}$.
        Let $D$ be a Cartier divisor on $W$ with $\supp(D) \subseteq W_s$,
        which coincides on $U$ with $f^*(\Div(z_0))$.
    \end{par}
    \begin{par}
        We can describe a geometric point $x: \Spec{\bar K} \to U$ of $U$ 
        by its coordinates
        $(f \circ x)^\#(z_0), (f \circ x)^\#(z_1)$.
        If $x \in U(\bar K)$ specialises into $U$, we have
        $(f \circ x)^\#(z_0), (f \circ x)^\#(z_1) \in R_{\bar K}$.
        Since the divisor $D$ is given on $U$ by $f^\#(z_0)$,
        \cref{metr-koord-lokal} implies
        \[ 
            f_D(x) = -\log_b(\betr{(f \circ x)^\#(z_0))}).
        \]
        Thus we get the coordinate function $f_D$
        by the valuation of the $z_0$-component of the point $x$.
    \end{par}
\end{bsp}


\subsection{The Reduction Map} 
\begin{par}
    Let us now study the reduction map. First
    we recall the definition of the reduction map
    \cite[2.4.2]{rumely} for curves and define
    a natural generalisation on products of curves.
    Then we describe this reduction map
    using tropical coordinates.
\end{par}
\begin{defn}[{\cite[2.4.2]{rumely}}]
    \label{metr-red-def1}
    Let $X$ be a regular strict semi-stable $S$-curve 
    having a reduction set without multiple simplices.
    Let $\kappa$ denote the inductive system 
    \[ \kappa = \{\tilde K_n \mid n \in \IN, K \subseteq \tilde K_n \subseteq \bar K,
            [K_n : K] = n \}.
    \]
    Then there are canonical isomorphisms
    \[
        \varinjlim_{\tilde K_n \in \kappa} |\RK(X_{\Oo_{\tilde K_n}})|
        \simeq
        \varinjlim_{\tilde K_n \in \kappa} |\sd_n\RK(X)|
        \simeq
        |\RK(X)|.
    \]
    Denote by $\Red_{\tilde K_n}: X(\tilde K_n) \to \RK(X_{\Oo_{\tilde K_n}})$ the map
    which maps a point $x \in X(\tilde K_n)$ to the component $C \in \RK(X_{\Oo{\tilde K_n}})$,
    in which $x$ specialises.
    The limit of these maps induces a map
    \[
        \Red: X(\bar K) = \varinjlim X(\tilde K_n) \to \varinjlim |\RK(X_{\Oo_{\tilde K_n}})|
        \simeq |\RK(X)|,
    \]
    which is called \emph{reduction map}.
\end{defn}
\begin{par}
    We generalise the reduction map to products of curves:
\end{par}
\begin{defn}
    \label{metr-red-def}
    Let $X$ be a regular strict semi-stable $S$-curve
    having a reduction set without multiple simplices, $d \in \IN$
    and $W=W(X,<,d)$ the product model constructed in \cref{desi-prodkomp}.
    Then the map $\Red: W(\bar K) \to |\RK(W)|$,
    which makes the diagram
    \[
        \begin{CD}
            W(\bar K) @>\Red >> |\RK(W)| \\
            @V\pr_i VV    @V\pr_i VV    \\
            X(\bar K) @>\Red >> |\RK(X)| 
        \end{CD}
    \]
    for each $i=1, \ldots d$ commutative,
    is called \emph{reduction map}.
\end{defn}
\begin{par}
    We can give an alternative description of the reduction map
    using coordinate functions.
    Let $X$ be a regular strict semi-stable curve having a reduction set
    without multiple simplices, $d \in \IN$
    and $W=W(X,<,d)$ the product model of $X_\eta$.
\end{par}
\begin{par}
    We identify the vertices $C \in \RK(W)_0$ of the reduction set
    with irreducible components of $W_s$. 
    Since $W$ is regular strict semi-stable, each $C \in \RK(W)_0$
    represents a Cartier divisor and by \cref{metr-koord-fkt}
    we get an associated coordinate function,
    which we denote by $f_C: X_\eta(\bar K) \to \IR$.
\end{par}

\begin{satz}
    \label{metr-red-koord}
    Let $X$ be a regular strict semi-stable curve, $d \in \IN$
    and $W=W(X,<,d)$ the product model of $X_\eta^n$.
    Let $x \in W(\bar K)$ be a geometric point. 
    Then the values of the coordinate functions
    $(f_C(x))_{C \in \RK(W)_0}$ yield
    a probability distribution on $\RK(W)_0$
    with support in a simplex of $\RK(W)$.
    They determine a point $p \in |\RK(W)|$,
    which coincides with $\Red(x) \in |\RK(W)|$.
\end{satz}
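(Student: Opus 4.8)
For orientation write $\nu := -\log_b|\cdot|$ for the valuation on $\bar K$ normalised by $\nu(\pi)=1$, let $\tilde x\colon \bar S \to W$ be the continuation of $x$ and $\tilde x(s)\in W_s$ its image in the special fibre; recall that the vertices of $\RK(W)=\Gamma(X)^d$ are $d$-tuples $C=(C_1,\dots,C_d)$ of vertices of $\Gamma(X)$. The proof splits into showing that $(f_C(x))_{C\in\RK(W)_0}$ is a probability distribution supported on a single simplex, and then identifying the point it induces with $\Red(x)$. Since $W$ is strict semi-stable with reduced special fibre of multiplicity one, we have $\sum_{C} C = \Div(\pi)$ as Cartier divisors, so \cref{metr-koordfkt-additiv} and \cref{metr-koord-lokal} give $\sum_C f_C(x) = f_{\Div(\pi)}(x) = -\log_b|x^{\#}(\pi)| = -\log_b|\pi| = 1$. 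Each $C$ is effective, hence locally cut out by a regular function $g\in\Oo_W(U)$ with $x^{\#}(g|_{U_\eta})\in R_{\bar K}$ whenever $x$ specialises into $U$, so \cref{metr-koord-lokal} yields $f_C(x) = -\log_b|x^{\#}(g)| \geq 0$; moreover $f_C(x)>0$ exactly when $g$ vanishes at $\tilde x(s)$, i.e. when $C$ passes through $\tilde x(s)$, and $f_C(x)=0$ otherwise, since then $g$ is a unit near $\tilde x(s)$. The components through $\tilde x(s)$ span a simplex of $\RK(W)$ by the combinatorial description of the reduction set in \cite{publ1}, so the distribution is supported on one simplex and determines a point $p\in|\RK(W)|$ through its barycentric coordinates.

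To prove $p=\Red(x)$ I use that both sides are determined by their images under the projections $\pr_i\colon|\RK(W)|=|\Gamma(X)|^d \to |\Gamma(X)|$: for $\Red(x)$ this is the defining property $\pr_i(\Red(x))=\Red(\pr_i(x))$ of \cref{metr-red-def}. For $p$ I first record the divisorial identity $\pr_i^{*}C' = \sum_{C\colon C_i = C'} C$ on $W$, which follows from $(X^d)_s=(X_s)^d$ together with the fact that \cref{desi-prodkomp} introduces no new components in the special fibre; combined with \cref{metr-koordfkt-additiv} and the functoriality \cref{metr-koordfkt-funktor} this gives $\sum_{C\colon C_i=C'} f_C(x) = f_{\pr_i^{*}C'}(x) = f_{C'}(\pr_i(x))$. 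Hence the marginal of $(f_C(x))_C$ along the $i$-th factor is the curve distribution $(f_{C'}(\pr_i(x)))_{C'}$, and under the product structure of $|\Gamma(X)|^d$ supplied by \cite{publ1} the projection $\pr_i(p)$ is precisely the point of $|\Gamma(X)|$ with these barycentric coordinates. This reduces the claim to the case $d=1$.

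For a curve, if $\tilde x(s)$ is a smooth point lying on the single component $C_0$, then $f_{C_0}(x)=1$ and all other values vanish, so $p$ is the vertex $C_0=\Red(x)$. If $\tilde x(s)$ is a node of $C_0$ and $C_1$, then \'etale-locally $X\cong\Spec R[z_0,z_1]/(z_0z_1-\pi)$ with $C_i=\Div(z_i)$, and the computation in the Example above gives $f_{C_0}(x)=\nu(z_0(x))$, $f_{C_1}(x)=\nu(z_1(x))$ with $\nu(z_0(x))+\nu(z_1(x))=\nu(\pi)=1$; thus $p$ is the point at parameter $\nu(z_0(x))$ on the edge $[C_0,C_1]$. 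On the other hand, for $x\in X(\tilde K_n)$ the base change $X_{\Oo_{\tilde K_n}}$ subdivides this edge into $n$ segments and $\Red_{\tilde K_n}(x)$ is the component into which $x$ specialises, which under $|\sd_n\RK(X)|\simeq|\RK(X)|$ sits at parameter $\nu(z_0(x))$ as well; passing to the limit over $\kappa$ gives $\Red(x)=p$ for $d=1$. Combining with the reduction of the previous paragraph and the defining diagram yields $\pr_i(p)=\pr_i(\Red(x))$ for every $i$, whence $p=\Red(x)$.

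I expect the curve computation, together with the passage through the inductive system $\kappa$, to be the main obstacle: one must verify that the segment of the resolved base-changed model into which $x$ specialises corresponds, under $|\sd_n\RK(X)|\simeq|\RK(X)|$, to the parameter $\nu(z_0(x))$, and that this remains compatible with the coordinate functions computed on the unsubdivided model $W$. The remaining bookkeeping—that the coordinate projection of the realization $|\Gamma(X)|^d$ genuinely computes the marginal distribution on product vertices—is routine once the product structure of $\RK(W)$ from \cite{publ1} is in hand.
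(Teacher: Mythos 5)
Your proposal is correct and follows essentially the same route as the paper: the paper likewise splits the argument into (i) the probability-distribution statement via $\sum_C C=\Div(\pi)$ and local equations (its \cref{metr-koordlike}), (ii) the curve case reduced to the standard scheme $\Spec R[z_0,z_1]/(z_0z_1-\pi)$ and the $n$-fold subdivided model (its \cref{metr-red-koord1}), and (iii) the reduction of the product case to $d=1$ via $\pr_i^*(C')=\sum_{\pr_i(C)=C'}C$ together with \cref{metr-koordfkt-additiv} and \cref{metr-koordfkt-funktor}. The one step you flag as an obstacle — matching the component of the subdivided model into which $x$ specialises with the parameter $\nu(z_0(x))$ — is exactly what the paper settles by writing out the structure morphism $x_0\mapsto y_0^iy_1^{i+1}$, $x_1\mapsto y_0^{n-i}y_1^{n-i-1}$ of the blown-up base-changed model.
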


\begin{par}
    We split the proof in three parts. First let us show
    that $(f_C(x))_{C \in \RK(W)}$ gives a probability distribution:
\end{par}
\begin{prop}
    \label{metr-koordlike}
    Let $W$ be a proper regular strict semi-stable $S$-scheme 
    having a reduction set without multiple simplices.
    Let $x \in W(\bar K)$ be a geometric point.
    Then 
    $\sum_{C \in \RK(W)_0} f_C(x) = 1$
    and
    $f_C(x) \geq 0$ for all $C \in \RK(W)_0$.
    For each $C \in \RK(W)_0$ the relation $f_C(x)>0$ holds, iff
    $x$ specialises into the component $C$.
\end{prop}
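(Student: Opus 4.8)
The plan is to derive all three assertions from the two local statements already proved, \cref{metr-koord-lokal} and \cref{metr-koordfkt-additiv}, together with the strict normal crossings structure of $W_s$. Throughout, let $\tilde x\colon \bar S \to W$ denote the continuation of $x$ furnished by the valuative criterion of properness, and write $w := \tilde x(s) \in W_s$ for the reduction point; recall that $x$ specialises into $C$ means precisely $w \in C$.

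I would treat the normalisation $\sum_C f_C(x) = 1$ first, by a global argument. Since $W$ is regular strict semi-stable, its special fibre is reduced, so as Cartier divisors on $W$ one has $\sum_{C \in \RK(W)_0} C = \Div(\pi)$, the divisor of the global function $\pi \in R \subseteq \Oo_W(W)$. By \cref{metr-koordfkt-additiv} (additivity of coordinate functions) this yields $\sum_C f_C(x) = f_{\Div(\pi)}(x)$, and applying \cref{metr-koord-lokal} with $U = W$ and the rational function $\pi$ gives
\[
    \sum_{C} f_C(x) = -\log_b\bigl(\betr{x^{\#}(\pi)}\bigr) = -\log_b(\betr{\pi}) = -\log_b(1/b) = 1,
\]
using that $x^{\#}(\pi) = \pi$ (as $\pi$ comes from the base) and the normalisation $\betr{\pi} = 1/b$.

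For non-negativity and the characterisation of positivity I would localise at $w$ and distinguish two cases, using that $W$ is Noetherian so that $W_s$ has only finitely many components. If $w \notin C$, choose an open $U \ni w$ with $U \cap C = \emptyset$; then $C$ is cut out on $U$ by the unit $1$, and since $\tilde x$ factors through $U$, \cref{metr-koord-lokal} gives $f_C(x) = -\log_b(\betr{x^{\#}(1)}) = 0$. If instead $w \in C$, then regularity of $W$ makes $C$ locally principal, so there is an open $U \ni w$ and $x_C \in \Oo_W(U)$ cutting out $C$ on $U$; as $\supp C \subseteq W_s$, \cref{metr-koord-lokal} applies and gives $f_C(x) = -\log_b(\betr{x^{\#}(x_C)})$. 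Here $\tilde x^{\#}(x_C) \in R_{\bar K}$, and since $w$ lies on $C = V(x_C)$ its image in the residue field vanishes, so $\tilde x^{\#}(x_C)$ lies in the maximal ideal of $R_{\bar K}$; hence $\betr{x^{\#}(x_C)} < 1$ and $f_C(x) > 0$. Together the two cases give $f_C(x) \geq 0$ for every $C$, and $f_C(x) > 0$ exactly when $w \in C$, that is, exactly when $x$ specialises into $C$.

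The step I expect to be the main obstacle is the analysis in the case $w \in C$: one must turn the set-theoretic incidence $w \in C$ into the valuation-theoretic inequality $\betr{x^{\#}(x_C)} < 1$. This rests on two inputs that must be handled carefully, namely the regularity of $W$, which guarantees that each component is locally principal and hence admits a local equation to feed into \cref{metr-koord-lokal}, and the control of the reduction of $\tilde x^{\#}(x_C)$ in the residue field. By contrast, the identity $\sum_C f_C(x) = 1$ is immediate once one recognises $\sum_C C = \Div(\pi)$, and the vanishing $f_C(x) = 0$ for components avoiding $w$ is a direct consequence of the locality of the coordinate function.
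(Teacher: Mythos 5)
Your proposal is correct and follows essentially the same route as the paper: the normalisation $\sum_C f_C(x)=1$ via $\sum_C C = \Div(\pi)$ together with \cref{metr-koordfkt-additiv} and \cref{metr-koord-lokal}, and the sign/positivity claims by passing to a trivialising neighbourhood of the reduction point and relating membership of $w$ in $C$ to the local equation landing in the maximal ideal of $R_{\bar K}$. The only cosmetic difference is that you organise the second half as a case distinction on $w\in C$ versus $w\notin C$, whereas the paper proves $f_C(x)\geq 0$ once from effectivity and then the equivalence directly; the substance is identical.
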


\begin{proof}
    \begin{par}
        The special fibre $W_s$ is given by the principal divisor
        $D_\pi = \Div{\pi}$.
        Since $X_s$ is reduced, 
        $\sum_{C \in \RK(X)_0} C = D_\pi$ holds and by \cref{metr-koordfkt-additiv}
        we get
        $\sum_{C \in \RK(X)_0} f_C(x) = f_{D_\pi}(x)$.
        By \cref{metr-koord-lokal} $f_{D_\pi}(p)=1$ for each $p$,
        which implies the first claim.
    \end{par}
    \begin{par}
        For the second claim let 
        $\tilde x \in X(R_{\bar K})$
        be the continuation of $x \in X(\bar K)$. 
        Let $U \subset X$ be an open neighbourhood
        of $\tilde x$, in which the effective Cartier divisor $C$ 
        is trivialised by a section $h \in \Oo_U(U)$. 
        Then \cref{metr-koord-lokal} implies
        $f_{D_C}(x) = -\log_b(\betr{x^\# h}) \geq 0$.
    \end{par}
    \begin{par}
        By definition the point $x$ specialises into the component $C$,
        iff $\tilde x(\bar s)$ is in $C$, this means
        $h_{\tilde x(\bar s)}$ is in the maximum ideal $\mId_{\tilde x(\bar s)}$
        of the local ring $U_{\tilde x(\bar s)}$. Therefore 
        $\tilde x^\#(h)$ lies in the maximum ideal  $\{x \in R_{\bar K} \mid \betr{x}<1\}$
        of $R_{\bar K}$.
        According to \cref{metr-koord-lokal} this is equivalent to $f_C(x) > 0$.
    \end{par}
\end{proof}

\begin{par}
    Using \cref{metr-koordlike} we are able to prove \cref{metr-red-koord}
    for $d=1$:
\end{par}
\begin{prop}
    \label{metr-red-koord1}
    Let $W=X$ be a proper regular strict semi-stable $S$-curve. 
    Let $n \in \IN$ be a natural number,
    $K_n/K$ a finite field extension of degree $n$
    and $R_n:=\Oo_{K_n}$ the ring of integers in $K$.
    Let $x \in X(K_n)$ be a $K_n$-rational point. 
    Then $\Red(x)$ coincides with the point given by
    the coordinate functions $(f_C(x))_{C \in \RK(X)_0}$.
\end{prop}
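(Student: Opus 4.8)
The plan is to reduce the statement to a local computation around the closed point into which $x$ specialises, distinguishing whether this point is a smooth point of $X_s$ or a node. In both cases I will invoke \cref{metr-koordlike}, which guarantees that $(f_C(x))_{C \in \RK(X)_0}$ is a probability distribution whose support is contained in the vertex set of a single simplex of $\RK(X)$; it therefore determines a well-defined point of $|\RK(X)|$, and the task is to identify this point with $\Red(x)$.

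If $x$ specialises into a smooth point of a component $C$ of $X_s$, then by \cref{metr-koordlike} we have $f_C(x) > 0$ while $f_{C'}(x) = 0$ for every other vertex $C'$; since the values sum to $1$, it follows that $f_C(x) = 1$. The associated point of $|\RK(X)|$ is hence the vertex $C$ itself, which is by \cref{metr-red-def1} exactly $\Red(x)$. The remaining case, where $x$ specialises into a node lying on two components $C_0, C_1$, is the substantial one. Around such a node there is an open $U$ and a morphism onto the standard scheme $\Spec R[z_0,z_1]/(z_0z_1 - \pi)$ with $C_i \cap U = f^{*}\Div(z_i)$, so \cref{metr-koord-lokal} (applied as in the Example following \cref{metr-koordfkt-funktor}) gives $f_{C_0}(x) = -\log_b\betr{a_0}$ and $f_{C_1}(x) = -\log_b\betr{a_1}$, where $a_i := (f\circ x)^{\#}(z_i)$ satisfy $a_0 a_1 = \pi$. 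These two numbers are nonnegative and sum to $1$, and they are precisely the barycentric coordinates of the point determined by $(f_C(x))_C$ on the edge $\{C_0,C_1\}$; I must match this edge-point with $\Red(x)$.

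To compute $\Red(x)$ I pass to the model $X_{\Oo_{K_n}}$. Since $k$ is algebraically closed, $K_n/K$ is totally ramified, so $\pi = u\varpi^n$ for a uniformiser $\varpi$ of $K_n$ and a unit $u$, and consequently $-\log_b\betr{a_0} = v/n$ for some integer $0 \le v \le n$. Base change turns the node into the singularity $z_0 z_1 = u\varpi^n$, whose minimal semi-stable resolution inserts a chain of projective lines $E_0 = C_1, E_1, \ldots, E_n = C_0$ between the two branches; this chain realises the $n$-fold subdivision of the edge into segments of length $1/n$, in accordance with the isomorphism $|\RK(X_{\Oo_{K_n}})| \simeq |\sd_n\RK(X)| \simeq |\RK(X)|$ of \cref{metr-red-def1}. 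The decisive observation is that a $K_n$-rational point, whose coordinates have valuation in $\tfrac1n\IZ$, cannot specialise into a node of this chain and instead reduces to the smooth locus of the single component $E_v$ singled out by $-\log_b\betr{a_0} = v/n$. Hence $\Red_{K_n}(x) = E_v$, and under $|\sd_n\RK(X)| \simeq |\RK(X)|$ this vertex sits at distance $v/n$ from $C_1$, i.e. it has barycentric coordinates $(v/n, (n-v)/n) = (f_{C_0}(x), f_{C_1}(x))$ relative to $(C_0, C_1)$. This is exactly the point produced by the coordinate functions, which proves the claim.

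The step I expect to be the main obstacle is this last combinatorial matching: verifying that the desingularisation of $z_0z_1 = u\varpi^n$ yields a chain whose vertices correspond, under the identifications of \cref{metr-red-def1}, precisely to the values $v/n$ taken by the coordinate function, and that the $K_n$-point $x$ reduces into the smooth locus of $E_v$ rather than onto one of the nodes of the chain. Once the resolution of this $A_{n-1}$-type singularity and its dual graph are described explicitly, everything else reduces to the additivity and locality already recorded in \cref{metr-koordfkt-funktor}, \cref{metr-koord-lokal} and \cref{metr-koordlike}.
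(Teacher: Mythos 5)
Your argument is correct and follows essentially the same route as the paper: reduce via \cref{metr-koordlike} to the local standard scheme $\spec{R[z_0,z_1]/(z_0z_1-\pi)}$, pass to the subdivided model over $R_n$ whose special fibre is a chain realising $\sd_n$ of the edge, observe that a $K_n$-rational point must reduce into the smooth locus of exactly one chain component, and match that component with the valuations $-\log_b\betr{a_i}$. The only cosmetic difference is that the paper invokes the ready-made model $X_n$ of \cref{desi-vbw} (with an explicit local structure morphism $x_0\mapsto y_0^iy_1^{i+1}$) where you resolve the $A_{n-1}$ singularity by hand, so no gap remains.
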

\begin{proof}
    Denote by $X_n$ the model of \cref{desi-vbw}. 
    By \cref{metr-koordlike} it suffices to consider the components of $X_s$ resp. $(X_n)_s$,
    in which the point $x$ specialises.
    Thus we may assume that $X$ has the form
    $X=L:=\spec{R[x_0,x_1]/(x_0x_1 - \pi)}$.
    Since $\RK(L)$ has only one edge, 
    the reduction set $\RK(X_n) = \sd_n(\RK(L))$ looks like
    \[
        C'_0 - C'_1 - \cdots - C'_n\ .
    \]
    Choose $i \in \{0, \ldots, n\}$ such that $x$ specialises into $C'_i$.
    Then there is a neighbourhood of $x$ of the form
    $U:=\spec{R_n[y_0,y_1]}$ with structure morphism
    \begin{align*}
        \spec{R[x_0,x_1]/(x_0x_1 - \pi)} &\to \spec{R_n[y_0,y_1]/(y_0y_1 - \tilde\pi)},\\
        x_0 \mapsto y_0^iy_1^{i+1}, x_1 &\mapsto y_0^{n-i}y_1^{n-i-1}.
    \end{align*}
    We may assume that $C'_i$ is given on $U$ by $\Div(y_0)$.
    By \cref{metr-koordlike} we get $f_{C'_i}(x) = 1$.
    Using \cref{metr-koordlike} again 
    we deduce $f_{C_0}(x) = \frac{i}{n}$, $f_{C_1}(x) = \frac{n-i}{n}$ 
    and the claim.
\end{proof}
\begin{par}
    The last part is to consider the product situation $W=W(X,<,d)$:
\end{par}
\begin{prop}
    Let $X$ be a proper regular strict semi-stable curve, further $W=W(X,<,d)$
    and $x \in W(\bar K)$.
    Then the point $\Red(x) \in |\RK(W)| = |\RK(X)^d|$ is given by 
    the probability distribution
    $(f_C(x))_{C \in \RK(W)_0}$.
\end{prop}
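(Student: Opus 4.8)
The plan is to reduce the claim to its one-dimensional version \cref{metr-red-koord1} by exploiting that $\Red$ on the product is defined coordinatewise. Geometric realisation turns products of simplicial sets into products of spaces, so $|\RK(W)| = |\RK(X)^d| = |\RK(X)|^d$, and a point of this space is determined by its $d$ images under the projections $\pr_i\colon |\RK(W)| \to |\RK(X)|$. By the diagram defining $\Red$ in \cref{metr-red-def} we have $\pr_i(\Red(x)) = \Red(\pr_i(x))$, and since $\bar K = \varinjlim_n K_n$ the point $\pr_i(x)$ lies in some $X(K_n)$; hence \cref{metr-red-koord1} identifies $\Red(\pr_i(x))$ with the point of $|\RK(X)|$ determined by the distribution $(f_C(\pr_i(x)))_{C \in \RK(X)_0}$.

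Let $q \in |\RK(W)|$ denote the point given by $(f_{(C_1,\ldots,C_d)}(x))_{(C_1,\ldots,C_d)}$; this is well defined because by \cref{metr-koordlike} the values $f_C(x)$ form a probability distribution whose support — the set of components of $W_s$ into which $x$ specialises — is a simplex of $\RK(W)$. It suffices to show $\pr_i(q) = \pr_i(\Red(x))$ for every $i$. Since the reduction sets carry no multiple simplices, a point of $|\RK(X)|$ is a convex combination of the vertices of a single simplex, and realisation sends such a point to the push-forward of its vertex distribution along the vertex map of a simplicial morphism. The vertex map underlying $\pr_i\colon \RK(X)^d \to \RK(X)$ is $(C_1,\ldots,C_d)\mapsto C_i$, so $\pr_i(q)$ is the point of $|\RK(X)|$ determined by the marginal distribution $\big(\sum_{C_i = C} f_{(C_1,\ldots,C_d)}(x)\big)_{C \in \RK(X)_0}$. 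Comparing with the first paragraph, the proposition reduces to the marginalisation identity
\[
    f_C(\pr_i(x)) = \sum_{\substack{(C_1,\ldots,C_d)\in \RK(X)_0^d\\ C_i = C}} f_{(C_1,\ldots,C_d)}(x)
    \qquad\text{for all } C \in \RK(X)_0.
\]

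To prove this I would pass from coordinate functions back to divisors. Writing $\pr_i\colon W \to X$ also for the projection morphism (the blow-down $W \to X^d$ followed by the $i$-th factor projection), functoriality \cref{metr-koordfkt-funktor} gives $f_C(\pr_i(x)) = f_{\pr_i^{*}C}(x)$, and additivity \cref{metr-koordfkt-additiv} reduces the identity to the divisor equality $\pr_i^{*}C = \sum_{C_i = C} (C_1,\ldots,C_d)$ in $\cadiv_{W_s}(W)$. This I would establish without analysing the individual Gross--Schoen blow-ups: summing over all components $C'$ of $X_s$ gives $\sum_{C'}\pr_i^{*}C' = \pr_i^{*}(\Div(\pi)) = \Div(\pi) = W_s = \sum_{(C_1,\ldots,C_d)} (C_1,\ldots,C_d)$, where every component occurs with multiplicity one since both $X$ and $W$ are strict semi-stable with reduced special fibre. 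The strict transform $(C_1,\ldots,C_d)$ maps under $W \to X^d$ to the generic point of $C_1\times_k\cdots\times_k C_d$ and then under $\pr_i$ to the generic point of $C_i$, so it lies in the support of $\pr_i^{*}C'$ only for $C'=C_i$. The supports of the effective divisors $\pr_i^{*}C'$ are therefore pairwise disjoint, and the reduced total forces each $\pr_i^{*}C'$ to be the reduced sum of exactly the components with $i$-th entry $C'$, giving the identity.

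The main obstacle is the divisor equality $\pr_i^{*}C = \sum_{C_i=C}(C_1,\ldots,C_d)$: a priori the pullback through the non-flat, singular scheme $X^d$ could acquire exceptional contributions or higher multiplicities from the blow-ups. The $\Div(\pi)$-argument circumvents a direct computation, but one must verify carefully the generic-point bookkeeping — that no component of $W_s$ is hidden in more than one $\pr_i^{*}C'$ — and that realisation is compatible with push-forward of vertex distributions along $\pr_i$, which rests on the Eilenberg--Zilber description of the simplices of $\RK(X)^d$.
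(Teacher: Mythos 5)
Your proof follows essentially the same route as the paper: reduce via the defining diagram of $\Red$ and \cref{metr-red-koord1} to the marginalisation identity, and deduce that identity from the divisor equality $\pr_i^*C=\sum_{\pr_i(C')=C}C'$ together with \cref{metr-koordfkt-additiv} and \cref{metr-koordfkt-funktor}. The only difference is that you supply a justification (via $\pr_i^*\Div(\pi)=W_s$, disjoint supports, and reducedness) for the divisor equality, which the paper simply asserts; that argument is sound since the Gross--Schoen blow-ups introduce no new components in the special fibre.
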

\begin{proof}
    By definition \cref{metr-red-def} and \cref{metr-red-koord1} it suffices to show 
    that for each $i \in \{1, \ldots d\}$ the diagram
    \[ 
        \begin{CD}
            W @>>> |\RK(W)|\simeq |\RK(X)^d| \\
            @V \pr_i VV @V \pr_i VV \\
            X @>>> |\RK(X)|
        \end{CD} 
    \]
    commutes. Let $i \in \{1, \ldots d\}$. 
    Since $\pr_i$ is a
    dominant morphism between reduced local noetherian schemes, there exists
    a pull-back of Cartier divisors $\pr_i^*$.
    As $W$ and $X$ are regular strict semi-stable we may identify
    each element $C \in \RK(X)_0$ with a Cartier divisor
    and have the equation
    \[
        \pr_i^*(C) = \sum_{C' \in \RK(W)_0, \pr_i(C')=C} C'.
    \]
    Then \cref{metr-koordfkt-additiv} and \cref{metr-koordfkt-funktor} imply
    \[
        f_C(x) = \sum_{C' \in \RK(W)_0, \pr_i(C')} f_{C'}(x)
    \]
    which is exactly the description of $\pr_i: \RK(X)^d \to \RK(X)$ 
    in coordinate functions.
\end{proof}

\begin{par}
    The reduction map allows us to describe vertical Cartier divisors on $X$
    by analytic objects, precisely by functions on $|\RK(X)|$, the geometric realisation
    of the reduction set.
    We formulate this in the following proposition:
\end{par}
\begin{prop}
    \label{metr-div-fkt}
    Let $X$ be a regular strict semi-stable $S$-curve with total ordering $<$ on
    $X_s^{(0)}$. Let $d \in \IN$ and $W=W(X,\leq,d)$ be the product model
    constructed in \cref{desi-prodkomp} and let $D \in \cadiv_{W_s}{W}$ be a Cartier divisor
    with support in $W_s$.
    Then the function $f_D$ factorises through the reduction map.
    We denote the induced map by $\tilde f_D: |\RK(W)| \to \IR$.
    The function $\tilde f_D$ is affine in each simplex of $\RK(W)$
    and therefore uniquely defined by the values on the vertices
    $(f(C))\mid_{C \in \RK(W)_0}$.
    The divisor can be retrieved by
    \[
        D= \sum_{C \in \RK(W)_0} f_D(C)[C].
    \]
\end{prop}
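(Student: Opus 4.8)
The plan is to reduce everything to the explicit description of the reduction map furnished by \cref{metr-red-koord} together with the additivity of coordinate functions. Since $W$ is regular strict semi-stable, every Cartier divisor with support in $W_s$ is a $\IZ$-linear combination of the irreducible components of $W_s$, which are exactly the vertices $C \in \RK(W)_0$. I would therefore begin by writing $D = \sum_{C \in \RK(W)_0} n_C\,[C]$ with $n_C \in \IZ$ and apply \cref{metr-koordfkt-additiv} to obtain the pointwise identity $f_D = \sum_{C \in \RK(W)_0} n_C f_C$ of functions on $W_\eta(\bar K)$.

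Next I would define the candidate $\tilde f_D \colon |\RK(W)| \to \IR$ directly by the affine formula $\tilde f_D(p) := \sum_{C \in \RK(W)_0} n_C\, p_C$, where $(p_C)_{C \in \RK(W)_0}$ denote the barycentric coordinates of $p$. By \cref{metr-koordlike} the support of these coordinates lies in a single simplex, so the sum is finite, and on each simplex $\tilde f_D$ is linear in the barycentric coordinates, hence affine; this already yields the asserted affineness on simplices and, since an affine function is determined by its values on the vertices, the uniqueness statement. The factorisation through $\Red$ is then immediate: \cref{metr-red-koord} identifies the barycentric coordinates of $\Red(x)$ with $(f_C(x))_{C \in \RK(W)_0}$, so for every $x \in W(\bar K)$ we get $f_D(x) = \sum_C n_C f_C(x) = \tilde f_D(\Red(x))$, i.e. $f_D = \tilde f_D \circ \Red$.

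Finally, to recover the divisor I would evaluate $\tilde f_D$ at a vertex $C_0 \in \RK(W)_0 \subset |\RK(W)|$. Its barycentric coordinates are $p_C = \delta_{C, C_0}$, so $\tilde f_D(C_0) = n_{C_0}$; interpreting $f_D(C_0)$ as this value (equivalently, evaluating $f_D$ at any geometric point specialising into $C_0$ alone, which by \cref{metr-koordlike} reduces to $C_0$) gives $f_D(C) = n_C$ for every $C$, whence $D = \sum_C n_C\,[C] = \sum_C f_D(C)\,[C]$.

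The genuine content of the argument has already been absorbed into \cref{metr-red-koord}, so no serious obstacle remains; the one point I would handle with care is purely notational, namely the well-definedness of evaluating $f_D$ — a priori a function on geometric points of $W_\eta$ — at the vertices of $\RK(W)$, which is precisely what the factorisation through $\Red$ legitimises.
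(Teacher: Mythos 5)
Your proposal is correct and follows essentially the same route as the paper: decompose $D=\sum_C n_C[C]$, use \cref{metr-koordfkt-additiv} to reduce to the coordinate functions $f_C$ (which are affine on simplices and, by \cref{metr-red-koord}, give precisely the barycentric coordinates of $\Red(x)$), and recover $n_{C'}$ by evaluating at a vertex, equivalently at a point specialising only into $C'$ via \cref{metr-koordlike}. Your write-up is somewhat more explicit than the paper's (which dismisses the factorisation as ``trivial''), but there is no substantive difference.
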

\begin{proof}
    \begin{par}
        The factorisation is trivial:
        If $D$ is the Cartier divisor of one component $C' \in \RK(W)_0$,
        then $f_{C'}$ is itself a coordinate function and therefore affine on each simplex.
        Since each Cartier divisor is a linear combination of divisors of this form,
        the claim is implied by \cref{metr-koordfkt-additiv}.
    \end{par}
    \begin{par}
        Let $D=\sum_{C \in \RK(W)_0} n_{C}[C]$. 
        For each $C' \in \RK(W)_0$ we choose a point $x \in W(\bar K)$, 
        which specialises only into the component $C'$. By 
        \cref{metr-koordfkt-additiv} and \cref{metr-koordlike}
        we get
        \[
            f_D(x) = \sum_{C \in \RK(W)_0} n_{C}f_C(x) = n_{C'} 
        \]
        and the claim is proven.
    \end{par}
\end{proof}


\subsection{Morphisms Between Models} 

\begin{par}
    The coordinate functions $f_D$ are useful to construct morphisms between 
    product models of the type described by \cref{desi-prodkomp} 
    and models arising from ramified base-change in the following way:
\end{par}
\begin{satz}[{\cite[Theorem 3.1]{publ1}}]
   \label{desi-vbw}  
    Let $S:=\spec{R}$ be the spectrum of a complete discrete valuation ring.
    Let $K_n/K$ be a field extension of degree $n \in \IN$ and $R_n$ the 
    ring of integers in $K_n$.
    We denote $S_n:=\spec{R_n}$.
    Let $X$ be a regular strict semi-stable $S$-curve 
    with a total ordering on $X^{(0)}$, whose simplicial reduction set $\Gamma(X)$ has no
    multiple simplices.
    Let $X_n$ be the scheme obtained by
    blowing up $X \times_S S_n$ successively in all singular points, 
    blowing up the resulting scheme successively in all singular points, and so on $n/2$ times.
    Then $X_n$ is a regular strict semi-stable $S_n$ curve
    with
    $(X_n)_{\eta_n} = (X_\eta) \times_{\spec K} \spec K_n$.
    Furthermore there exists a total ordering of $(X_n)^{(0)}$ such
    that there is a canonical isomorphism 
    \[
        \Gamma(X_n) \simeq \unt_n(\Gamma(X)),
    \]
    which maps the simplicial reduction set of $X_n$ to the canonical
    $n$-fold subdivision $\unt_n(\Gamma(X))$ of the simplicial set $\Gamma(X)$
    (see \cref{sk-unt-def}).
\end{satz}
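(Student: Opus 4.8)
The plan is to reduce the whole statement to an \'etale-local analysis at the singular points of the base-changed scheme, where only a standard $A$-type surface singularity occurs, and then to read off the simplicial consequences. Since $k$ is algebraically closed, the extension $K_n/K$ is totally ramified, so a uniformizer $\pi_n$ of $R_n$ satisfies $\pi = u\,\pi_n^n$ for some unit $u\in R_n^\times$. First I would locate the singular locus of $X\times_S S_n$. Away from the nodes of $X_s$ the curve $X$ is smooth over $S$ with local model $\Spec R[x]$, and base change preserves regularity there; hence no blow-up centre meets these points, and since every centre lies in the special fibre the generic fibre is unchanged, giving $(X_n)_{\eta_n}=X_\eta\times_{\Spec K}\Spec K_n$ at once. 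At a node $X$ has the local model $\Spec R[x,y]/(xy-\pi)$, so $X\times_S S_n$ acquires the local model $\Spec R_n[x,y]/(xy-u\,\pi_n^n)$, an $A_{n-1}$-singularity concentrated at the closed point.

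The technical heart is the explicit resolution of this $A_{n-1}$-singularity by the prescribed procedure. Blowing up the reduced singular point and inspecting the standard charts shows that the two branches are separated by the exceptional divisor, whose intersection with the strict transform is the projectivised tangent cone: for an $A_m$-point with $m\ge 2$ this is a pair of lines meeting in a single point, producing two new $\IP^1$-components and a residual $A_{m-2}$-point (which is regular when $m=2$), while for $m=1$ it is a smooth conic, producing a single $\IP^1$ and a regular scheme. Iterating the operation ``blow up at all singular points'' therefore lowers the index by $2$ each round and terminates after $\lceil(n-1)/2\rceil\le n/2$ rounds in a regular scheme, inserting altogether exactly $n-1$ new components that form a chain between the two original branches $\{x=0\}$ and $\{y=0\}$. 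This matches the number of blow-ups in the statement.

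Next I would check strict semi-stability of the resulting $S_n$-model. Computing $\ord(\pi_n)$ at the generic point of each branch and on each exceptional chart shows that every component, original or exceptional, occurs in $\Div(\pi_n)$ with multiplicity $1$ and that consecutive components meet transversally in a single reduced point; hence $(X_n)_{s_n}$ is a reduced strict normal crossings divisor. Regularity is read off from the charts produced during the resolution, and strictness, i.e.\ geometric integrality of the components, is inherited from $X$ together with $k=\bar k$. This establishes that $X_n$ is a regular strict semi-stable $S_n$-curve.

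Finally I would translate the geometry into the simplicial claim. Because $\Gamma(X)$ has no multiple simplices, the nodes of $X_s$ correspond bijectively to the edges of $\Gamma(X)$ and may be treated independently; the resolution replaces each such edge by a chain of $n$ edges through the $n-1$ inserted vertices, which is precisely the $n$-fold subdivision, so on the level of simplicial sets $\Gamma(X_n)\simeq\unt_n(\Gamma(X))$. The total order on $X^{(0)}$ orients each edge, and inserting the new components in this orientation extends it to a total order on $(X_n)^{(0)}$ compatible with the subdivision. I expect the main obstacle to lie here, in the bookkeeping that turns this into a \emph{canonical} isomorphism: one must verify that the independently performed local resolutions glue to a single globally ordered model, that blow-ups at distinct nodes do not interfere, and that the identification $\Gamma(X_n)\simeq\unt_n\Gamma(X)$ is compatible with the face maps of the reduction set rather than merely matching vertices and edges.
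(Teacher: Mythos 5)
The paper does not actually prove this statement: it is imported verbatim from \cite[Theorem 3.1]{publ1}, and no argument for it appears anywhere in the present text, so there is no internal proof to compare yours against. Judged on its own, your outline is the standard resolution argument and is correct in substance: total ramification (forced by $k=\bar k$) gives $\pi=u\pi_n^n$; étale-locally at a node the base change is $xy=u\pi_n^n$; blowing up the closed point of $xy=\pi_n^m$ produces the two regular charts $\Spec R_n[x,w]/(xw-\pi_n)$ and $\Spec R_n[y,w']/(yw'-\pi_n)$ together with the chart $uv=\pi_n^{m-2}$, so each round inserts the components you describe, drops the index by two, and terminates after $\lfloor n/2\rfloor$ rounds with a reduced chain of $n-1$ exceptional $\IP^1$'s between the two branches; the multiplicity-one and transversality claims are read off from these same charts, and the simplicial statement is the combinatorial shadow of the chain.

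Two small points of care. First, your sentence about the exceptional divisor conflates blowing up the point on the surface itself (where the exceptional divisor \emph{is} $\mathrm{Proj}$ of the tangent cone) with blowing up an ambient smooth space and intersecting with the strict transform; either reading gives the pair of lines resp.\ the conic, but the chart computation above is what actually certifies regularity, the reducedness of $\Div(\pi_n)$, and the position of the residual singular point at the intersection of the two new components, so it should be written out rather than inferred from the tangent cone alone. Second, the items you defer to ``bookkeeping'' are genuinely the remaining content: blow-ups at the finitely many disjoint nodes commute, so the rounds are well defined; a total order on $(X_n)^{(0)}$ with the required property exists because the new vertices on distinct edges are disjoint and can be ordered monotonically along each chain and then interleaved arbitrarily; and the identification with $\unt_n(\Gamma(X))$ as defined in \cref{sk-unt-def} must be checked on face maps, not just on vertices and edges. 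None of these is an obstacle, but as written they are asserted rather than proved.
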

\begin{par}
    The coordinate functions $f_D$ are useful 
    to construct morphisms between models of the type
    described by \cref{desi-prodkomp} and 
\end{par}
\begin{satz}
    \label{metr-mod-morph}
    Let $K_n$ be an algebraic field extension of degree $n$,
    $R_n$ the ring of integers of a finite field extension $K_n/K$
    and $S_n:=\spec{R_n}$.
    Let $X$ be a regular strict semi-stable $S$-curve and $X_n$ 
    the model of $X_\eta \times_K K_n$ constructed in \cref{desi-vbw}.
    Let $W=W(X,<,d)$ denote the product model of $X_\eta^d$
    and $W_n=W(X_n,<,d)$ the analogous model of $(X_n)_{\eta_n}^d$.
    Then there exists a morphism $\varphi: W_n \to W$.
\end{satz}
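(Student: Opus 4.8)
The plan is to realise $\varphi$ as a lift of the base-change morphism on the $d$-fold products, using the universal property of the blow-ups that define $W$.

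First I would produce a morphism on the factors. By \cref{desi-vbw} the model $X_n$ is obtained from $X \times_S S_n$ by a sequence of blow-ups in singular points; let $b \colon X_n \to X \times_S S_n$ be the resulting morphism and $\pr \colon X \times_S S_n \to X$ the projection, and set $g := \pr \circ b \colon X_n \to X$. This is a dominant $S$-morphism whose restriction to generic fibres is the base change $X_\eta \times_K K_n \to X_\eta$. Taking the $d$-fold self-product gives $g^d \colon X_n^d \to X^d = W_0$, and composing with the blow-up morphism $q \colon W_n \to X_n^d$ from \cref{desi-prodkomp} yields
\[
    \psi := g^d \circ q \colon W_n \longrightarrow W_0 = X^d .
\]
On generic fibres $\psi$ restricts to the base-change morphism $W_\eta \times_K K_n \to W_\eta$, so any extension of $\psi$ into $W$ is automatically a model of this base change.

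Next I would invoke the universal property of blowing up. By \cref{desi-prodkomp} the scheme $W = W_k$ arises from $W_0$ by a finite chain $W_0 \leftarrow W_1 \leftarrow \cdots \leftarrow W_k$ of blow-ups in the strict transforms of the product centers $B_i$; the composite $W \to W_0$ is therefore the blow-up of $W_0$ along a single ideal $\IId$ whose zero locus lies in $(W_0)_s$. Consequently $\psi$ lifts uniquely to a morphism $\varphi \colon W_n \to W$ as soon as the pulled-back ideal $\psi^{-1}\IId \cdot \Oo_{W_n}$ is invertible; equivalently, applying the universal property inductively along the chain, it suffices to check at each stage that the pullback to $W_n$ of the ideal of the current center $B_i'$ is invertible.

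The crux is this invertibility, which I would verify locally. Each center $B_i = C_{i_1} \times \cdots \times C_{i_d}$ is a product of components of $X_s$, so near a point its ideal is generated by the $d$ pulled-back local equations of the factors $C_{i_j}$. Pulling back under $g^d$ replaces each such equation by a monomial in the coordinates of the corresponding factor of $X_n^d$, because $g^*(C_{i_j})$ is an effective Cartier divisor supported in $(X_n)_s$ (its pullback exists by \cite[Prop 11.48]{goertz} and is computed by the explicit monomial substitution recorded in the proof of \cref{metr-red-koord1}). Using the localisation reduction of \cite[Prop 4.23]{publ1} I would then pass to the standard product $L^d$ with $L = \Spec R[x_0,x_1]/(x_0x_1 - \pi)$, where both the Gross--Schoen blow-ups producing $W_n$ and the monomial ideals $\psi^{-1}\IId$ are toric, and carry out an explicit toric computation showing that every such corner ideal becomes principal.

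The main obstacle I anticipate is exactly this last toric bookkeeping: one must track how the strict transforms $B_i'$ of the product centers interact with the finer $n$-fold subdivision blow-ups that produce $W_n$, and confirm that each pulled-back center becomes a genuine Cartier divisor rather than merely a corner ideal of higher codimension. Once local invertibility is established, the resulting local lifts are unique, hence independent of the chosen charts, and therefore glue to the desired global morphism $\varphi \colon W_n \to W$.
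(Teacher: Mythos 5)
Your overall strategy coincides with the paper's: factor $W_n \to X^d = W^{[0]}$ through the base change on the factors, then lift step by step through the chain of blow-ups $W^{[0]} \leftarrow W^{[1]} \leftarrow \cdots \leftarrow W^{[N]}=W$ using the universal property, which reduces everything to showing that the pullback to $W_n$ of the ideal of each blow-up center is invertible. Up to that point your reduction is correct. The problem is that this invertibility \emph{is} the theorem, and you leave it unproven: you describe it as "the main obstacle I anticipate" and defer it to an "explicit toric computation" and "toric bookkeeping" that you do not carry out. In particular, for the later centers one must handle the strict transforms $B_i'$ inside the partially blown-up schemes $W^{[i]}$ rather than the plain products $C_{i_1}\times\cdots\times C_{i_d}$ in $W_0$, and your local description of the ideal by the $d$ factor equations only applies to the first center. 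So as written the proposal is a correct plan with its decisive step missing.

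The paper closes exactly this gap without any local coordinate computation. By \cite[Lemma 3.6 (iii)]{publ1} each center is a scheme-theoretic intersection $C = D_1 \cap \cdots \cap D_l$ of effective vertical Cartier divisors, and \cref{metr-mor-caschni} characterises when such an intersection is Cartier: precisely when $\min(f_{D_1},\ldots,f_{D_l})$ is affine on each simplex of the reduction set. Since $C$ survives as a Cartier divisor on the final model $W=W^{[N]}$, this minimum is piecewise affine on $|\RK(W)|$; by \cref{metr-mod-vertr} composing with the canonical subdivision map $|\RK(W_n)| \to |\RK(W)|$ preserves piecewise affineness, so the pulled-back minimum is piecewise affine on $|\RK(W_n)|$, and \cref{metr-mor-caschni} applied in the reverse direction shows $(\varphi^{[i]})^{*}D_1 \cap \cdots \cap (\varphi^{[i]})^{*}D_l$ is again a Cartier divisor on $W_n$. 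This tropical criterion is the idea your proposal is missing; if you want to complete your route instead, you must actually perform the toric verification for every strict-transform center, which is substantially more work than the two lemmas above.
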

\begin{par}
    The proof uses the universal property of blow-up.
    Its basic idea is the following:
\end{par}
\begin{lem}
    \label{metr-mor-caschni}
    Let $W$ be a proper regular strict semi-stable scheme and $D_1,D_2$ two effective
    Cartier divisors with support in $W_s$.
    The scheme theoretic intersection $D_1 \cap D_2$ is a Cartier divisor
    iff the function 
    $\min(f_{D_1}, f_{D_2})$
    is affine on each simplex of $\RK(W)$.
    In this case the Cartier divisor $D:=D_1 \cap D_2$ provides
    \[
        f_D = \min(f_{D_1}, f_{D_2}).
    \]
\end{lem}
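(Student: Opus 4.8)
The plan is to reduce the whole statement to a computation in the local rings of $W$ and to exploit the strict normal crossing structure of $W_s$. Being a Cartier divisor is a local property, so $D := D_1 \cap D_2$ is Cartier precisely when its ideal sheaf $\mathcal{I}_{D_1} + \mathcal{I}_{D_2}$ has principal stalk at every point $p \in W$. Away from the special fibre both $D_i$ are trivial, so only points $p \in W_s$ matter. I fix such a $p$ and let $C_1, \ldots, C_r$ be the components of $W_s$ through $p$; these span a simplex $\sigma_p$ of $\RK(W)$, and every simplex of $\RK(W)$ arises as $\sigma_p$ for the generic point $p$ of the corresponding stratum. Since $W$ is regular strict semi-stable, the $C_i$ are cut out near $p$ by part of a regular system of parameters $x_1, \ldots, x_r$ of the regular local ring $\Oo := \Oo_{W,p}$, and since every other component avoids $p$, the effective divisors are given there by the monomials $f_1 = \prod_{i=1}^r x_i^{a_i}$ and $f_2 = \prod_{i=1}^r x_i^{b_i}$, where $a_i, b_i \geq 0$ are the multiplicities of $D_1, D_2$ along $C_i$.

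The heart of the matter, and the step I expect to be the main obstacle, is the following purely local claim: the ideal $(f_1, f_2) \subseteq \Oo$ is principal if and only if the exponent vectors are comparable, i.e. $a_i \le b_i$ for all $i$ or $b_i \le a_i$ for all $i$, and in that case $(f_1, f_2) = (\prod_i x_i^{\min(a_i, b_i)})$. The ``if'' direction is immediate, since then one monomial divides the other. For the converse I would use that $\Oo$ is a unique factorization domain (Auslander--Buchsbaum) in which $x_1, \ldots, x_r$ are pairwise non-associate primes. Writing $(f_1, f_2) = (g)$, the generator $g$ divides both $f_1$ and $f_2$, so it is of the form $g = u\prod_i x_i^{c_i}$ with $c_i \le \min(a_i, b_i)$. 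Setting $h_1 := f_1/g$ and $h_2 := f_2/g$, which up to units equal $\prod_i x_i^{a_i - c_i}$ and $\prod_i x_i^{b_i - c_i}$, the relation $(g) = (f_1, f_2) = g\cdot(h_1, h_2)$ forces $(h_1, h_2) = \Oo$; since $\Oo$ is local this means one of $h_1, h_2$ is a unit, hence a monomial with all exponents zero. Thus $a_i = c_i$ for all $i$ or $b_i = c_i$ for all $i$, which is precisely comparability.

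It remains to translate this into the language of coordinate functions and to globalise. By \cref{metr-koordfkt-additiv} together with \cref{metr-div-fkt}, the restrictions of $f_{D_1}$ and $f_{D_2}$ to $\sigma_p$ are the affine functions $\sum_i a_i t_i$ and $\sum_i b_i t_i$ in the barycentric coordinates $t_i = f_{C_i}$. Their difference is affine and, being determined by its vertex values $a_i - b_i$, keeps a constant sign on $\sigma_p$ exactly when the $a_i - b_i$ all have the same sign; hence $\min(f_{D_1}, f_{D_2})$ is affine on $\sigma_p$ if and only if the exponent vectors are comparable. Combined with the local claim, this shows that the stalk of $\mathcal{I}_{D_1}+\mathcal{I}_{D_2}$ is principal at $p$ iff $\min(f_{D_1}, f_{D_2})$ is affine on $\sigma_p$. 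Running this over all $p \in W_s$ and using that every simplex is some $\sigma_p$ yields the asserted equivalence: $D$ is Cartier iff $\min(f_{D_1}, f_{D_2})$ is affine on every simplex of $\RK(W)$. Finally, in the Cartier case the local computation gives $(f_1, f_2) = (\prod_i x_i^{\min(a_i,b_i)})$, so $D$ is an effective Cartier divisor with support in $W_s$, and on each simplex one reads off $f_D = \min(f_{D_1}, f_{D_2})$ via \cref{metr-div-fkt}; since the simplices cover $|\RK(W)|$ and both sides are continuous, the identity $f_D = \min(f_{D_1}, f_{D_2})$ holds everywhere.
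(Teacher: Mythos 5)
Your proof is correct, but it takes a genuinely different route from the paper's. You reduce the whole statement to a principality criterion in the regular local rings $\Oo_{W,p}$: using that $\Oo_{W,p}$ is a UFD in which the branches $x_1,\ldots,x_r$ of $W_s$ through $p$ are pairwise non-associate primes, you show that $(\prod_i x_i^{a_i},\prod_i x_i^{b_i})$ is principal iff the exponent vectors are comparable, in which case it is generated by the componentwise minimum, and then translate comparability into affineness of $\min(f_{D_1},f_{D_2})$ on the simplex $\sigma_p$. The paper never performs this local-ring computation. For the direction ``Cartier $\Rightarrow$ minimum'', it pulls the ideal identity $(r)=(r_1,r_2)$ back along an $R_{\bar K}$-valued point and uses that in $R_{\bar K}$ the ideal generated by two elements is generated by the one of larger norm, so $f_D=\min(f_{D_1},f_{D_2})$ follows from \cref{metr-koord-lokal}, affineness of $f_D$ being automatic for the coordinate function of a Cartier divisor. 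For the converse it argues softly: affineness of the minimum on a simplex forces one of the two functions to dominate the other there, hence (say) $D_2-D_1$ is effective on the corresponding open set and $D_1\cap D_2=D_1$ is trivially Cartier. Your version buys a self-contained, purely algebraic equivalence that handles both directions at once and makes explicit where regularity and strict normal crossings enter (it also silently requires, as you note, that each simplex is realised by a point lying on exactly the corresponding components, which holds by properness of the intersections); the paper's version is shorter, stays entirely within the formalism of coordinate functions and $\bar K$-points already set up in \cref{metr-koord-lokal}, and sidesteps the UFD argument. One small point to make explicit in your write-up: the identity $f_D=\min(f_{D_1},f_{D_2})$ on a simplex uses that $\sum_i\min(a_i,b_i)t_i$ equals $\min(\sum_i a_it_i,\sum_i b_it_i)$ only because one exponent vector dominates the other there; as stated this is implicit in your last step.
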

\begin{proof}
    \begin{par}
        Assume that the scheme theoretic intersection $D=D_1 \cap D_2$ is a Cartier
        divisor. 
        Then $f_D$ is affine on each simplex and it suffices to show that 
        $f_D = \min(f_{D_1}, f_{D_2})$.
        Let $\tilde x \in W(R_{\bar K})$ be a $R_{\bar K}$-valued point of $X$
        and $U \subseteq W$ a trivialising neighbourhood of the divisors $D,D_1,D_2$.
        Since the divisors are effective, they are given by sections 
        $r,r_1,r_2 \in \Gamma(\Oo_W, U)$.
        The prerequisite $D=D_1 \cap D_2$ implies $(r) = (r_1,r_2)$
        and thus
        \[
            (x^*(r) ) = (x^*(r_1), x^*(r_2)).
        \]
        Since $R_{\bar K}$ is a principal ideal domain, we have
        $\betr{x^*(r)} = \min(\betr{x^*(r_1)}, \betr{x^*(r_2)})$ 
        and the claim results from \cref{metr-koord-lokal}.
    \end{par}
    \begin{par}
        For the converse we may restrict ourself to one simplex. 
        There the proposition implies
        that one function dominates the other; without loss of generality 
        $f_{D_1} \leq f_{D_2}$.
        Using \cref{metr-koord-lokal} the divisor $D_2 - D_1$ is effective 
        and thus $D= D_1 \cap D_2 = D_1$ is again a Cartier divisor.
    \end{par}
\end{proof}
\begin{par}
    Before we can apply \cref{metr-mor-caschni} in the setting of \cref{metr-mod-morph}
    we show the following compatibility of reduction sets:
\end{par}
\begin{lem}
    \label{metr-mod-vertr}
    Let $X, X_n, W, W_n$ be as in \cref{metr-mod-morph}.
    Then the diagram
    \[
        \begin{CD}
            W_n(\bar K) @>\simeq>> W(\bar K) \\
            @V\Red VV @V\Red VV \\
            |\RK(W_n)| @>\tau>> |\RK(W)|
        \end{CD}
    \]
    commutes, where $\tau: |\RK(W_n)| \simeq |\sd_n\RK(W)| \to |\RK(W)|$
    denotes the canonical morphism of the geometric realisation of the $n$-th subdivision
    (see \cref{sk-unt-kanon}).
    Let $f: |\RK(W)| \to \IR$ be a function which is affine on each simplex.
    Then $f \circ \tau$ is affine on each simplex of $\RK(W_n)$.
\end{lem}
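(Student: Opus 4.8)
The plan is to prove the two assertions of the lemma separately. First note that the top arrow is the canonical identification
\[ W_n(\bar K) = (W_n)_{\eta_n}(\bar K) = \big(X_\eta\times_K K_n\big)^d(\bar K) = X_\eta^d(\bar K) = W(\bar K), \]
which holds because a $\bar K$-valued point factors through the generic fibre and $\bar K\supseteq K_n$; it is compatible with the projections, inducing the analogous identification $\iota_i\colon X_n(\bar K)\xrightarrow{\sim}X(\bar K)$ on each factor. Since this lemma is what will let us construct the morphism of \cref{metr-mod-morph}, I must argue without that morphism, relying only on \cref{metr-red-def1} and \cref{metr-red-def}.

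For the commutativity I would reduce to the case $d=1$. By \cref{metr-red-def} the reduction map $\Red\colon W(\bar K)\to|\RK(W)|=|\RK(X)|^d$ is the unique map whose composition with each $\pr_i$ equals $\Red\circ\pr_i$, and the same holds for $W_n$. On geometric realizations $\tau$ is the $d$-fold product of the curve-level subdivision map $\tau_0\colon|\RK(X_n)|=|\sd_n\RK(X)|\to|\RK(X)|$, so that $\pr_i\circ\tau=\tau_0\circ\pr_i$ by the product compatibility of \cref{sk-unt-kanon}. Hence, writing $g:=\tau\circ\Red$ (transported to $W(\bar K)$ via the top identification), and using that a point of the product $|\RK(X)|^d$ is determined by its projections, it suffices to verify $\pr_i\circ g=\Red\circ\pr_i$ for each $i$; uniqueness then gives $g=\Red$, which is the desired commutativity. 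Unwinding the definitions, $\pr_i\circ g=\tau_0\circ\pr_i\circ\Red=\tau_0\circ\Red\circ\pr_i$ (using \cref{metr-red-def} for $W_n$ together with compatibility of the $\iota_i$ with $\pr_i$), so everything reduces to the single identity $\tau_0\circ\Red=\Red$ on $X_n(\bar K)=X(\bar K)$.

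This one-dimensional identity I would read off from \cref{metr-red-def1}. The curve $X_n$ of \cref{desi-vbw} is precisely the semi-stable model over $\Oo_{K_n}$ occurring in the inductive system $\kappa$, with $\RK(X_n)=\sd_n\RK(X)$ and with $\tau_0$ the canonical identification $|\RK(X_n)|=|\sd_n\RK(X)|\xrightarrow{\sim}|\RK(X)|$ built into the colimit defining $\Red$. Since $\Red\colon X(\bar K)\to|\RK(X)|$ is by definition the colimit of the maps $X(\tilde K_m)\to\RK(X_{\Oo_{\tilde K_m}})\xrightarrow{\sim}|\RK(X)|$, the value $\Red(x)$ is computed by reducing $x$ in a sufficiently ramified model and pushing the result to $|\RK(X)|$ through the corresponding subdivision identification; applied to the model $X_n$ this yields $\tau_0(\Red(x))=\Red(x)$, as required. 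For points defined over $K_n$ one may instead compare the barycentric coordinates $(f_C(x))_C$ supplied by \cref{metr-red-koord1} on the two sides.

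For the second assertion, recall from \cref{sk-unt-kanon} that $\tau$ carries each simplex $\sigma$ of $\RK(W_n)=\sd_n\RK(W)$ affinely into a single simplex $\sigma'$ of $\RK(W)$; if $f$ is affine on each simplex of $\RK(W)$ then $(f\circ\tau)|_\sigma=(f|_{\sigma'})\circ(\tau|_\sigma)$ is a composition of affine maps, hence affine, so $f\circ\tau$ is affine on every simplex. The step I expect to be genuinely delicate is rather the one-dimensional identity $\tau_0\circ\Red=\Red$: one must match $X_n$ with the model $X_{\Oo_{K_n}}$ of \cref{metr-red-def1}, check that $\tau_0$ coincides with the subdivision identification built into the colimit defining $\Red$, and confirm cofinality of the inductive systems over $K$ and over $K_n$, so that reducing in $X_n$ and reducing directly in $X$ push to the same point of $|\RK(X)|$; granting this, the product reduction via \cref{metr-red-def} and the affineness statement are purely formal.
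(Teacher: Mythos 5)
Your proposal follows essentially the same route as the paper: reduce the commutativity to the curve case via the projection-compatibility built into \cref{metr-red-def}, identify $\tau$ with the product of the curve-level subdivision maps, settle the $d=1$ case from the colimit description in \cref{metr-red-def1}, and deduce the affineness claim from the fact that each simplex of the subdivision lands affinely in a single simplex (\cref{sk-unt-kanon-einb}). You supply more detail than the paper (which compresses the key step into ``it can be shown that the concatenation of the canonical isomorphisms equals $\tau$''), and you correctly flag that matching $X_n$ with the model in the inductive system is where the real work lies; the argument is sound.
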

\begin{proof}
    \begin{par}
        According to \cref{metr-red-def1} and \cref{metr-red-def}
        the diagram
        \[
            \begin{CD}
                W_n(\bar K) @>>> \prod_i X_n(\bar K) @>>> \prod_i X(\bar K) @>>> W(\bar K) \\
                @VVV @VVV @VVV @VVV \\
                |\RK(W_n)| @>>> \prod_i |\RK(X_n)| @>>> \prod_i |\RK(X)| @>>> |\RK(W)| 
            \end{CD}
        \]
        commutes and it can be shown that the concatenation of the canonical isomorphisms in the second line
        equals $\tau$.
    \end{par}
    \begin{par}
        The second claim is an implication of the definition of the canonical morphism
        $\tau$ (see \cref{sk-unt-kanon-einb}).
    \end{par}
\end{proof}

\begin{proof}[Proof of \cref{metr-mod-morph}]
    \begin{par}
        By definition $W$ is constructed as gradual blow-up of $W_d$:
        \[ W=W^{[N]} \to \cdots \to W^{[1]} \to W^{[0]}.  \]
        Obviously there is a morphism $\varphi^{[0]}: W_n \to W^{[0]}$. 
        We verify the univseral property of the blow-up to 
        get morphisms $\varphi^{[i]}: W_n \to W^{[i]}$.
    \end{par}
    \begin{par}
        Let $\varphi^{[i]}: W_n \to W^{[i]}$ be already constructed. 
        According to \cite[Lemma 3.6 (iii)]{publ1}
        the centre $C$ of the next blow-up $W^{[i+1]} \to W^{[i]}$ is
        given by an intersection of Cartier divisors
        $C=D_1 \cap \cdots \cap D_l$. 
        The component $C$ is a Cartier divisor on $W^{[i+1]}$ and on $W=W^{[N]}$,
        thus the functions $\min(f_{D_1}, \ldots f_{D_l})$ 
        are affine on each simplex of $\RK(W)$ (\cref{metr-mor-caschni}).
    \end{par}
    \begin{par}
        By \cref{metr-mod-vertr} the pull-backs
        under $\varphi^{[i]}$ are also affine on each simplex of $\RK(W)$
        and by \cref{metr-mor-caschni} the intersection
        $(\varphi^{[i]})^{-1}(C) = (\varphi^{[i]})^* D_1 \cap \cdots \cap (\varphi^{[i]})^* D_l$
        is a Cartier divisor.
        The universal property of the blow-up gives the postulated morphism
        $\varphi^{[i+1]}: W_n \to W^{[i+1]}$.
    \end{par}
\end{proof}



\section{Limits of Intersection Numbers} 
\label{limit}

\begin{par}
    In this last section we combine the theory of metrics
    with the results of \cite{publ1}.
    This allows us to describe the localised intersection numbers of vertical divisors
    by methods of analysis on the simplicial reduction set and use this description
    to approximate hermitean metrics on the trivial bundle.
\end{par}
\begin{par}
    Let $R$ be as usual a complete discrete valuation ring
    with algebraically closed residue field
    and $X$ a proper regular strict semi-stable curve on $S=\Spec{R}$
    with a total ordering $<$ on $X_s^{(0)}$. 
    We want to assume that the reduction set has no multiple simplices.
    Let $W:=W(X,<,d)$ be the regular strict semi-stable model of the product $(X_\eta)^d$
    as defined in \cref{desi-prodkomp}.
    By \cref{metr-div-fkt} there is a bijection between Cartier divisors on $W$ with support in the
    special fibre $W_s$ and piecewise affine functions on $|\RK(W)|$.
    Thus we can view the intersection product as $(d+1)$-fold pairing 
    between piecewise affine functions on $|\RK(W)|$.
\end{par}
\begin{par}
    We use a limit argument in the spririt of Zhang (\cite[Sec. 3]{zhang})
    to continue this pairing on piecewise smooth functions on $|\RK(W)|$
    and give an analytic description of this continuation.
\end{par}

\subsection{Analysis on Simplicial Reduction Sets} 
\begin{par}
    We start with the definition of analytical objects on products of graphs.
    Let $\Gamma$ be a finite graph. By functoriality (\cite[sk-kolim-simpl]{publ1}) each edge 
    $\gamma \in \Gamma_1$ induces an embedding $i_{\gamma}:I \to \Gamma$
    of the standard graph $I=\Delta[1]$ into $\Gamma$.
    In the product we get for each $d$~tuple 
    $\gamma=(\gamma_1, \ldots \gamma_d) \in \Gamma_1^d$
    of edges an embedding
    \[
        i_\gamma := i_{\gamma_1} \times \cdots \times i_{\gamma_d}:
        I^d \to \Gamma^d
    \]
    (compare \cref{schnitt-graph-zerleg}).
    By functoriality this induces a morphism
    $(i_\gamma)_*: |I^d| \to |\Gamma^d|$ 
    of the $d$\nobreakdash-dimensional ``standard cube'' $|I|^d$ into $|\Gamma^d|$.
\end{par}
\begin{par}
    We see these embeddings $(i_\gamma)_*$ as charts.
    To ease the definition of analytical terms
    we identify $|I|^d$ with the standard cube $[0,1]^d$ 
    in $\IR^d$.
    Let $(C_0, \ldots C_k) \in (I^d)_k$ be a $k$\nobreakdash-simplex given by its edges 
    $C_0, \ldots C_k \in (I^d)_0$. Then we denote by $[C_0, \ldots C_k]$ the closed
    convex hull of the points $C_0, \ldots C_k$ --- seen as points in $[0,1]^d$.
    A continuous function $f: [C_0, \ldots C_k] \to \IR$ is called smooth,
    if it can be continued to a smooth function on a open neighbourhood 
    $U \supseteq [C_0, \ldots C_k]$.
\end{par}
\begin{defn}
    \mbox{}
    \begin{enumerate}[(i)]
        \item
            A continuous function $f:|I^d| \simeq [0,1]^d \to \IR$ 
            is called
            \begin{enumerate}[(a)]
                \item
                    \emph{smooth in the cubes}, if $f$ is smooth on $[0,1]^d$,
                \item
                    \emph{smooth in the simplices}, if for each $k \in \IN$ 
                    and each $k$\nobreakdash-simplex $(C_0, \ldots C_k) \in (I^d)_k$
                    the restriction $f \mid_{[C_0, \ldots, C_k]}$ 
                    is smooth,
                \item
                    \emph{affine}, if for each $k \in \IN$ and each $k$\nobreakdash-simplex
                    $(C_0, \ldots C_k) \in (I^d)_k$ the restriction
                    $f \mid_{[C_0, \ldots C_k]}$
                    is affine.
            \end{enumerate}
            \begin{par}
                The set of continuous functions on $|I^d|$ is denoted by
                $\sms(I^d)$, the set of functions smooth in the cubes by $\smq(I^d)$,
                the set of functions smooth in the simplices by $\smd(I^d)$
                and the set of affine functions by $\lid(I^d)$.
            \end{par}
        \item
            \begin{par}
                Let $\Gamma$ be a graph. The set of continuous functions 
                $f: |\Gamma^d| \to \IR$ is denoted by $\sms(\Gamma^d)$.
                A function $f \in \sms(\Gamma^d)$ is called smooth in the cubes
                (smooth in the simplices, affine),
                if for each $\gamma \in (\Gamma_1)^d$
                the function 
                $(i_\gamma)^* f = f \circ i_\gamma: [0,1]^d \to \IR$
                is smooth in the cubes (smooth in the simplices, affine).
            \end{par}
            \begin{par}
                The set of these functions is denoted by $\smq(\Gamma^d)$ 
                ($\smd(\Gamma^d)$, $\lid(\gamma^d)$).
            \end{par}
    \end{enumerate}
\end{defn}

\begin{par}
    To define partial derivatives we have to discuss the points on which the functions
    from $\smd(\Gamma^d)$ and $\lid(\Gamma^d)$ have singularities.
    In the standard cube $|I^d|$ these are exactly the points 
    $x=(x_1, \ldots x_n) \in [0,1]^d$
    where two or more coordinates coincide.
    We call sets of this type \emph{generalised diagonals}:
\end{par}
\begin{defn}
    The points in $|I^d| \setminus \partial |I^d|$ are called \emph{inner points} of $|I^d|$.
    Let $\Gamma$ be an arbitrary graph. Then $x \in |\Gamma^d|$ is called 
    \emph{inner point}, 
    if there is a tuple $\gamma=(\gamma_1, \ldots \gamma_d) \in \Gamma_1^d$
    such that $x = (i_\gamma)_*(x')$ holds with $x' \in |I^d|$ an inner point.
    In this case $\gamma$ and $x'$ is unique.
    The set of inner points is denoted by $|\Gamma^d|\inner$.
\end{defn}
\begin{defn}
    \mbox{}
    \begin{enumerate}[(i)]
        \item
            Let $x \in |I^d|\inner$ be an inner point in $|I^d|\simeq [0,1]^d$, 
            given by its coordinates $(x_1, \ldots, x_d) \in [0,1]^d$. 
            We define a partition 
            $\{1, \ldots, d\} = A_1 \amalg \cdots \amalg A_l$
            such that
            \[ x_i = x_j \dund \exists h: i,j \in A_h \]
            holds.
            This partition is unique and is denoted by 
            \[
                d(x):=\{A_1, \ldots A_l\}.
            \]
        \item
            Let $x \in |\Gamma^d|\inner$ be an inner point in $|\Gamma^d|$.
            Then there exists a unique tuple $\gamma \in \Gamma_1^d$ and a unique point $x' \in |I^d|$
            such that $(i_\gamma)_*(x') = x$.
            We set
            \[
                d(x) := d(x').
            \]
    \end{enumerate}
\end{defn}
\begin{par}
    The partition $d(x)$ of $x \in |I^d|$ holds the information 
    which coordinates of $x$ coincide. For example the points $x \in |I^d|$
    with $d(x) = \{\{1, \ldots, d\}\}$ are exactly the 
    points of the usual diagonal 
    $\{(t, \ldots, t) \mid t \in (0,1)\}$ in $[0,1]^d$.
\end{par}
\begin{defn}
    Let $\Gamma$ be a graph and $\Part=\{A_1, \ldots, A_k\}$ a partition
    of $\{1, \ldots, d\}$.
    We call the subset
    \[ \diag_\Part(\Gamma^d) := \Big\{x \in |\Gamma^d| \Big| d(x) = \Part\Big\} 
        \subseteq |\Gamma^d|\]
    \emph{generalised diagonal} to the partition $\Part$.
\end{defn}
\begin{bem}
    \label{limit-diag-karte}
    Let $\Part = \{A_1, \ldots, A_l\}$ be a partition of $\{1, \ldots, d\}$
    and $I=\{i_1, \ldots, i_l\}$ with 
    $i_1 \in A_1, \ldots, i_l \in A_l$.
    Then 
    \[
        \diag_\Part(I^d) = \Big\{ x=(x_1, \ldots, x_d) \in |I^d| \Big| 
            x_{j_1} = x_{j_2} \iff \exists m \in \{1, \ldots, l\}: j_1,j_2 \in A_m \Big\}
    \] 
    and by the projection $\pr_I$ on the coordinates $i_1, \ldots, i_l$ we get
    a bijection
    \[ 
        \textrm{c}_\Part: \diag_\Part(I^d) \to [0,1]^{l},
        (x_1, \ldots, x_d) \mapsto (x_{i_1}, \ldots, x_{i_l}).
    \]
    The chart $\textrm{c}_\Part$ does not depend on the choice of $i_1, \ldots, i_l$.
\end{bem}
\begin{par}
    Since continuous functions are integrable, 
    we are able to define an integral on $\Gamma^d$ and on generalised diagonals
    $\diag(\Part)$.
\end{par}
\begin{defn}
    \mbox{}
    \begin{enumerate}[(i)]
        \item
            Let $\Gamma=I$ and $f \in \sms(I^d)$ be a continuous function on the standard
            cube $I^d$. Then $|I^d|$ is canonically homeomorphic to $[0,1]^d$
            and we may define the \emph{integral} of $f$ by
            \[
                \int_{i^d} f:= \int_{[0,1]^d} f(x) d\mu,
            \]
            where $\mu$ denotes the Lebesgue-measure on $[0,1]^d$.
            Let $\Part$ be a partition of the set $\{1, \ldots, d\}$.
            Then the integral along the diagonal $\diag_\Part$ 
            is defined by
            \[
                \int_{\diag_\Part(I^d)} f := \int_{[0,1]^{|\Part|}} f \circ
                \textrm{c}_\Part^{-1} d\mu,
            \]
            where $\textrm{c}_\Part$ denotes the chart from \cref{limit-diag-karte}.
        \item
            Let $\Gamma$ be an arbitrary graph, $f \in \sms(\Gamma^d)$ a continuous
            function on $\Gamma^d$ and $\Part$ a partition of $\{1, \ldots, d\}$.
            Then the \emph{integral} of $f$ along $\Gamma^d$ respectively along
            $\diag_\Part$ is defined by
            \begin{align*}
                \int_{\Gamma^d} f &:= \sum_{\gamma \in \Gamma_1^d} 
                \int_{I^d}(f \circ (i_\gamma)_*), \\
                \int_{\diag_\Part(\Gamma^d)} f & := \sum_{\gamma \in \Gamma_1^d}
                \int_{\diag_\Part(I^d)}(f \circ (i_\gamma)_*) .
            \end{align*}
    \end{enumerate}
\end{defn}
\begin{par}
    For the definition of generalised differential operators 
    on functions smooth in simplices we use a kind of discrete Fourier transform:
\end{par}
\begin{par}
    Let $v=(v_1, \ldots, v_d) \in \IF_2^d$ be a vector and denote by $|v|$ 
    the number of non-trivial components, i.e. $|v| = \#\{i \mid v_i \neq 0 \}$.
    We use these vectors to index the vertices of $I^d$.
    Let furthermore be $h \in \IR$. Then denote by $h^v \in \IR^d$ the point
    \[ h^v:= h\cdot ((-1)^{v_1}, \ldots, (-1)^{v_d}) \]
    in $\IR^d$.
    For each continuous function $f \in \sms(I^d)$ and each $x \in |I^d|\inner$ 
    we study the values of $f$ in a cube surrounding $x$:
    \[
        f^{h}_x: \IF_2^d \to \IR, v \mapsto f(x+h^v).
    \]
\end{par}
\begin{defn}
    \label{limit-diskret-fourier}
    \mbox{}
    \begin{enumerate}[(i)]
        \item
            Let $f \in \sms(I^d)$ be a continuous function on $I^d$, 
            $x \in |I^d|\inner$, $v \in \IF_2^d$. Let $h>0$ small enough,
            such that $x+h^w$ is an inner point for all $w \in \IF_2^d$.
            Then we define a function $\Delta_h^vf$ by the discrete Fourier transform
            \begin{align*}
                \Delta_h^v f: |I^d|\inner &\to \IR\\
                x &\mapsto \frac{1}{2^d} \sum_{w \in \IF_2^d}
                (-1)^{\angles{v,w}}f_x^h(w).
            \end{align*}
        \item
            Let $\Gamma$ be a graph without multiple edges and
            $f \in \sms(\Gamma^d)$.
            Let $x \in |\gamma^d|\inner$ an inner point which is given as image
            of a point $x' \in |I^d|\inner$ under $(i_\gamma)_*:|I^d \to |\Gamma^d|$
            for $\gamma \in \Gamma_1^d$.
            Let $v \in \IF_2^d$ and $h>0$ small enough, that $x'+h^w \in |I^d|\inner$
            holds for all $w \in \IF_2^d$.
            Then define
            \[ \Delta_h^vf(x) := \Delta_h^v f \circ (i_\gamma)_*. \]
            Since $x$ is an inner point, $\gamma$ is unique
            and thus this notation is well-defined.
    \end{enumerate}
\end{defn}

\begin{prop}
    \label{limit-smq-konvergenz}
    Let $f \in \sms(\Gamma^d)$ a continuous function, which is smooth in a neighbourhood
    of a inner point $x \in |\Gamma^d|\inner$. Then $\frac{1}{h^{|v|}}\Delta_h^vf(x)$
    converges to the following differential:
    \[
        \lim_{h \to 0} \frac{1}{h^{|v|}}\Delta_h^vf(x) = 
        D^vf(x) :=
        \left(\frac{\partial}{\partial x_1}\right)^{v_1}\cdots
        \left(\frac{\partial}{\partial x_d}\right)^{v_d}
        f(x).
    \]
\end{prop}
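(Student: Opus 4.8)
The plan is to reduce to the standard cube and then expand $f$ by Taylor's theorem, exploiting that the discrete Fourier transform over $\IF_2^d$ annihilates every Taylor monomial except the one matching the parity of $v$.

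First I would reduce to the case $\Gamma = I$. By the second part of \cref{limit-diskret-fourier}, at an inner point $x = (i_\gamma)_*(x')$ the quantity $\Delta_h^v f(x)$ is defined by pulling $f$ back along the unique chart $(i_\gamma)_*$, and $D^v f(x)$ is read off in the same chart. Since $x'$ is interior to the open cube and $f$ is smooth near $x$, the pullback $f \circ (i_\gamma)_*$ is smooth on an open neighbourhood of $x'$ in $[0,1]^d$. Hence I may assume $\Gamma = I$, identify $|I^d|$ with $[0,1]^d$, and treat $f$ as an honest smooth function near an interior point $x$.

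Next, for $h$ small enough that every $x + h^w$ with $w \in \IF_2^d$ lies in the neighbourhood of smoothness, I would insert the Taylor expansion of order $|v|$ with Peano remainder,
\[
    f(x + h^w) = \sum_{|\alpha| \leq |v|} \frac{(\partial^\alpha f)(x)}{\alpha!}\, h^{|\alpha|} \prod_{i=1}^d (-1)^{w_i \alpha_i} \; + \; o(h^{|v|}),
\]
into the definition of $\Delta_h^v f(x)$ and interchange the two finite sums. The coefficient of $\tfrac{1}{\alpha!}(\partial^\alpha f)(x)\,h^{|\alpha|}$ is then the character sum
\[
    \frac{1}{2^d}\sum_{w \in \IF_2^d} (-1)^{\angles{v,w}} \prod_{i=1}^d (-1)^{w_i \alpha_i}
    = \prod_{i=1}^d \tfrac{1}{2}\bigl(1 + (-1)^{v_i + \alpha_i}\bigr),
\]
which equals $1$ when $\alpha_i \equiv v_i \pmod 2$ for all $i$ and $0$ otherwise. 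Since $v_i \in \{0,1\}$, this parity condition forces $\alpha_i \geq v_i$, hence $|\alpha| \geq |v|$ with equality exactly for $\alpha = v$. Thus among all $\alpha$ with $|\alpha| \leq |v|$ only $\alpha = v$ contributes, and with $v! = 1$ and $\partial^v f = D^v f$ I obtain
\[
    \Delta_h^v f(x) = h^{|v|} D^v f(x) + o(h^{|v|}).
\]
Dividing by $h^{|v|}$ and letting $h \to 0$ yields the claim.

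The only substantive step is the character-sum identity: it is the orthogonality of the additive characters of $\IF_2^d$ that kills every monomial except the one of exact parity $v$, which is precisely what turns the rescaled discrete Fourier transform into the mixed partial derivative. The surrounding points --- the reduction to the cube and the control of the finitely many Peano remainders --- are routine, and in fact the same computation gives the sharper error $O(h^{|v|+2})$, since the next surviving monomials have total degree $|\alpha| = |v| + 2$.
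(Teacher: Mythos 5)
Your proposal is correct and follows essentially the same route as the paper's proof: reduce to the standard cube, insert the Taylor expansion of order $|v|$, and use the orthogonality of the characters of $\IF_2^d$ together with the parity argument $\alpha \equiv v \pmod 2$, $|\alpha|\leq |v| \Rightarrow \alpha = v$ to isolate the single surviving term $h^{|v|}D^vf(x)$. The concluding remark about the sharper error $O(h^{|v|+2})$ is a correct (if unneeded) bonus.
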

\begin{proof}
    \begin{par}
        It suffices to prove the proposition for the standard graph $\Gamma=I$.
        We use the multidimensional Taylor series of $f$ in $x$:
        Since $f$ is smooth in a neighbourhood of $x$, we have
        for each vector $w \in \IF_2^d$, each $h \in \IR$ and $l \in \IN$:
        \begin{align*}
            f(x+ h^w) & = \sum_{\substack{\lambda \in \IN_0^d\\ 0 \leq |\lambda| \leq l}}
            \frac{(h^w)^\lambda}{\lambda!} D^\lambda f(x) + \smallo{\norm{(h^w)}^{l} } \\
            & = \sum_{\substack{\lambda \in \IN_0^d \\ 0\leq|\lambda|\leq l}}
            (-1)^{\angles{w, \lambda \bmod 2}}\frac{h^{|\lambda|}}{\lambda!} D^\lambda
            f(x) + \smallo{h^{l}}.
        \end{align*}
        The terms $\lambda!$, $(h^w)^\lambda$, $|\lambda|$ are understood
        in the usual multi-index notation.
        For the Fourier transform $\Delta_h^vf$ this implies:
        \begin{align*}
            \Delta_h^vf(x) & = \frac{1}{2^d}\sum_{w \in \IF_2^d} (-1)^{\angles{v,w}}f(x+h^w) \\
            & = \frac{1}{2^d}\sum_{w \in \IF_2^d} (-1)^{\angles{v,w}}
            \sum_{0 \leq |\lambda|\leq |v|} (-1)^{\angles{w, \lambda \bmod 2}}
            \frac{h^{|\lambda|}}{\lambda!} D^\lambda f(x) + \smallo{h^{|v|}} \\
            & = \frac{1}{2^d}\sum_{0 \leq |\lambda| \leq |v|}
            \left(\sum_{w \in \IF_2^d} (-1)^{\angles{w, v - (\lambda \bmod 2)}} \right)
            \frac{h^{|\lambda|}}{\lambda!} D^\lambda f(x) + \smallo{h^{|v|}} \\
            & = h^{|v|}D^vf(x) + \smallo{h^{|v|}}.
        \end{align*}
        We conclude
        \[ 
            \lim_{h \to 0} \frac{1}{h^{|v|}} \Delta_h^vf(x) = D^v f(x) = 
            \left(\frac{\partial}{\partial x_1}\right)^{v_1}\cdots
            \left(\frac{\partial}{\partial x_d}\right)^{v_d}
            f(x).
        \] 
    \end{par}
\end{proof}

\begin{par}
    A similar proposition can be stated for functions, which are smooth on simplices. 
    For theses functions, however, we get a weaker convergence result on the generalised
    diagonals.
\end{par}

\begin{prop}
    \label{limit-sms-konvergenz}
    Let $f \in \smd(\Gamma^d)$ a function in $\Gamma^d$, which is smooth on simplices.
    Let $v \in \IF_2^d$ and $\Part=\{A_1, \ldots, A_k\}$ be a partition 
    of $\{1, \ldots, d\}$.
    We set
    \[ \alpha=\alpha(\Part, v) := 
        \#\{i\in \{1, \ldots, k\} \mid \exists a \in A_i: v_a = 1\}. \]
    Then for each point $x \in \diag_\Part(\Gamma^d)$ the limit
    \begin{equation}
        \label{limit-sms-konvergenz-funk}
        \lim_{h \to 0} \frac{1}{h^\alpha}\Delta^v_hf(x) 
    \end{equation}
    exists and is continuous in $x$ on $\diag_\Part(\Gamma^d)$.
\end{prop}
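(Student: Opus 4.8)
\emph{Proof strategy.} The plan is to reduce to the standard cube $\Gamma=I$ and then induct on the number $k$ of blocks of the partition $\Part$, peeling off one block at a time. By \cref{limit-diskret-fourier} the operator $\Delta_h^v$ at an inner point $x\in|\Gamma^d|\inner$ is evaluated in the unique chart $(i_\gamma)_*$ carrying $x$, so it suffices to treat $\Gamma=I$ and $x\in\diag_\Part(I^d)\subseteq[0,1]^d$. The geometric input I would use is that the maximal simplices of $I^d$ are the ordering regions $\{x_{\sigma(1)}\le\cdots\le x_{\sigma(d)}\}$, $\sigma\in\mathfrak{S}_d$, so that $f\in\smd(I^d)$ restricts to a smooth function on the closure of each such region and its only singularities lie on the diagonals $\{x_a=x_b\}$. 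A point $x\in\diag_\Part$ satisfies $x_a=x_b$ exactly when $a,b$ lie in a common block, the block-values being pairwise distinct; hence for all small $h>0$ and every $w\in\IF_2^d$ the point $x+h^w$ lies in a region determined solely by the within-block patterns of $w$, and the whole ray $t\mapsto x+t\,\eps_w$ with $\eps_w=((-1)^{w_1},\dots,(-1)^{w_d})$ stays (for small $t\ge 0$) in the closure of one such region.

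For the base case $\Part=\{\{1,\dots,d\}\}$ the point $x=(t,\dots,t)$ lies on the main diagonal and $\alpha=1$ if $v\ne 0$, $\alpha=0$ if $v=0$. The key observation I would isolate as a lemma is that, although the region containing $x+h^w$ is ambiguous whenever $w$ has ties (then $x+h^w$ sits on a diagonal), the one–sided directional derivatives $\partial^m_{\eps_w}f(x)$ along the ray direction $\eps_w$ are unambiguous: all smooth extensions meeting along the wall that carries this ray agree there, hence their derivatives in directions tangent to that wall coincide. Writing the one–dimensional Taylor expansion along each ray, $f(x+h^w)=\sum_{m=0}^{\alpha}\tfrac{h^m}{m!}\partial^m_{\eps_w}f(x)+\smallo{h^\alpha}$ uniformly in the finitely many $w$, and substituting into $\Delta_h^v f(x)=2^{-d}\sum_w(-1)^{\langle v,w\rangle}f(x+h^w)$, the constant term carries the character sum $\sum_w(-1)^{\langle v,w\rangle}$, which vanishes for $v\ne 0$ just as in \cref{limit-smq-konvergenz}. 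Thus $h^{-1}\Delta_h^v f(x)\to 2^{-d}\sum_w(-1)^{\langle v,w\rangle}\partial_{\eps_w}f(x)$, a finite combination of one–sided derivatives continuous in $x$ along the diagonal; the case $v=0$ is the average converging to $f(x)$.

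For the inductive step I would split $\{1,\dots,d\}=A'\sqcup A_k$ with $A'=A_1\cup\cdots\cup A_{k-1}$ and $v=(v',v^k)$, using the factorization $\Delta_h^v f(x)=2^{-\card A_k}\sum_{w^k}(-1)^{\langle v^k,w^k\rangle}\,\Delta_h^{v'}\big[f(\,\cdot\,,x^k+h^{w^k})\big](x')$. For each fixed $w^k$ the function $y'\mapsto f(y',x^k+h^{w^k})$ is smooth on the ordering regions of $A'$, so the inductive hypothesis yields $\Delta_h^{v'}[f(\cdot,x^k+h^{w^k})](x')=h^{\alpha'}\phi_h(x^k+h^{w^k})$ with $\alpha'=\alpha(\Part',v')$ and $\phi_h(y^k):=h^{-\alpha'}\Delta_h^{v'}[f(\cdot,y^k)](x')$. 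As $h\to 0$ this $\phi_h$ converges, locally uniformly and in $C^1$ on each ordering region of $A_k$, to a function $\phi$ smooth on those regions (each $A'$–directional derivative at $x'$ depends smoothly on the frozen parameter $y^k$ as long as $y^k$ stays in one $A_k$–region). Consequently $\Delta_h^v f(x)=h^{\alpha'}\,\Delta_h^{v^k}\phi_h(x^k)$, a single–block operator in the $A_k$–variables at the full–diagonal point $x^k$. If $v^k=0$ then $\alpha=\alpha'$ and this is an average converging to $\phi(x^k)$; if $v^k\ne 0$ then $\alpha=\alpha'+1$ and the $C^1$–uniform version of the base case applied to the family $\phi_h$ gives convergence of $h^{-1}\Delta_h^{v^k}\phi_h(x^k)$ to the single–block limit of $\phi$. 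Either way $h^{-\alpha}\Delta_h^v f(x)$ converges, with limit continuous in $x$.

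The hard part will be the uniformity that must be built into the inductive hypothesis: I need the base case in the stronger form that for a family $\phi_h\to\phi$ converging locally uniformly with first derivatives on each ordering region, $h^{-1}\Delta_h^{v^k}\phi_h(x^k)$ still converges to the single–block functional of $\phi$. This is believable because that functional is a continuous linear expression in the first region–wise derivatives and the $h$–error is controlled by their modulus of continuity, but checking that the intermediate object $\phi_h$ genuinely inherits region–wise $C^1$–smoothness in the frozen block and that the error estimates survive the freezing is where the real work lies; this, together with the careful bookkeeping of the tie–induced wall ambiguities in the base case, is the heart of the argument. Continuity of the final limit on $\diag_\Part(\Gamma^d)$ then follows from continuity of the region–wise derivatives up to the walls.
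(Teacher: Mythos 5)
Your strategy is genuinely different from the paper's. The paper proves the statement in one step: \cref{limit-sms-konv-hilf} shows that for a transversal $J$ of the blocks of $d(x)$ (with complement $H$) the partial Fourier sums $\sum_{v_H\in V_H}(-1)^{\langle v_H,v'\rangle}\Delta_h^{v+v_H}(f)$ sample $f$ only at the $2^{|J|}$ points $x'+h^{w_J+v'}$, which all lie in one closed simplex, so $f$ may be replaced there by a single smooth extension; inverting the Fourier transform (\cref{limit-sms-konv-hilf2}) yields a decomposition $\Delta_h^v(f)=\sum_{v_H\in V_H}\Delta_h^{v+v_H}(F_{v_H})$ with every $F_{v_H}$ smooth, and \cref{limit-smq-konvergenz} is then applied once to each summand: the unique $v_H$ minimizing $|v+v_H|$ (that minimum being $\alpha$) survives the division by $h^\alpha$ and the remaining terms vanish. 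You instead induct on the number of blocks, freezing one block and treating the inner operator by the inductive hypothesis, which forces you to pass the outer limit through a family $\phi_h$ that itself depends on $h$. What your route buys is an explicit description of the one-block limit as a signed sum of one-sided ray derivatives $2^{-d}\sum_w(-1)^{\langle v,w\rangle}\partial_{\eps_w}f(x)$, together with a clean isolation of the wall-ambiguity issue; what it costs is an interchange of limits that the paper's argument never needs.

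That interchange is where your write-up has a genuine gap. You yourself note that the base case must be upgraded to a statement about families $\phi_h\to\phi$ converging region-wise in $C^1$, and that $\phi_h(y^k)=h^{-\alpha'}\Delta_h^{v'}[f(\cdot,y^k)](x')$ must be shown to inherit region-wise $C^1$ regularity with uniform error control --- but you do not carry this out, and this is precisely the load-bearing step rather than a routine verification. The inductive hypothesis as stated (existence and continuity of the limit for each fixed frozen parameter) is too weak to run your induction: you would have to strengthen it to locally uniform convergence together with first derivatives in the frozen variables, prove that the limit is smooth (not merely continuous) on the ordering regions of the frozen block, and redo the tie/wall bookkeeping for the outer single-block operator, whose evaluation points $x^k+h^{w^k}$ again land on walls when $w^k$ has ties. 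I believe all of this can be filled in, since every estimate ultimately comes from the smooth extensions of $f$ on closed simplices, but as written the proof is incomplete exactly where it is hardest. The paper's single Fourier-inversion decomposition into smooth pieces avoids the parametrized limit entirely; if you want a complete argument with minimal analysis, that is the route to take.
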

\begin{par}
    The proof is very technical. Of course it is enough to show
    the proposition for the standard graph $\Gamma=I$. 
    In the next lemma we split the sum $\Delta^v_n(f)(x)$ from
    \cref{limit-diskret-fourier} into parts such that
    each part contains only contributions of one simplex.
    Then $f$ can be replaced by a smooth function and the proposition 
    is a consequence of \cref{limit-smq-konvergenz}.
\end{par}
\begin{par}
    Let $J \subseteq \{1, \ldots, d\}$ be a subset and 
    $H := \{1, \ldots, d\} \setminus J$ its complement.
    For the proof we denote by $V_J$ (resp. $V_H$) 
    the subspace of $V:=\IF_2^d$ spanned by
    the base vectors $e_i$ with $i \in J$ ($i \in H$). 
    The projections in the direct sum $V=V_J \oplus V_H$
    are denoted by $\pr_J$ and $\pr_H$.
\end{par}
\begin{lem}
    \label{limit-sms-konv-hilf}
    Let $f \in \smd(I^d)$ be smooth on simplices and $x \in |I^d|$
    with $d(x)=\{P_1, \ldots, P_k\}$.
    Let $J=\{j_1, \ldots, j_k\}$ with $j_1 \in P_1, \ldots, j_k \in P_k$.
    Denote by $H := \{1, \ldots d\} \setminus J$ its complement.
    Then for each $v \in V$ and $v' \in V_H$
    exists an $\epsilon >0$, an open neighbourhood $U \subseteq |I^d|$
    of $x$ and a function $F$ smooth on $U$ such that
    \[ 
        \sum_{v_H \in V_H} (-1)^{\angles{v_H,v'}}\Delta_h^{v+v_H}(f)(x')
        =\sum_{v_H \in V_H}(-1)^{\angles{v_H,v'}}\Delta_h^{v+v_H}(F)(x')
    \]
    holds for all $x' \in U$ with $d(x') = d(x)$ and all $h < \epsilon$.
\end{lem}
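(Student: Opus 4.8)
The plan is to reduce the alternating sum over $V_H$ to one in which the $H$-components of the perturbation are frozen to $v'$, and then to exhibit a single top-dimensional simplex whose closure absorbs all the surviving sample points; on that simplex $f$ coincides with a smooth extension $F$, which is what the lemma demands.

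First I would carry out the Fourier-type collapse. Inserting the definition of $\Delta_h^{u}$ from \cref{limit-diskret-fourier} with $u=v+v_H$ and interchanging the two summations gives
\[
    \sum_{v_H \in V_H} (-1)^{\angles{v_H,v'}}\Delta_h^{v+v_H}(f)(x')
    = \frac{1}{2^d}\sum_{w\in V}(-1)^{\angles{v,w}}f(x'+h^w)
      \sum_{v_H\in V_H}(-1)^{\angles{v_H,\, v'+\pr_H w}}.
\]
The inner sum is the orthogonality relation on $V_H$: it equals $2^{|H|}$ when $\pr_H w=v'$ and vanishes otherwise. Hence the whole expression collapses to
\[
    \frac{1}{2^{k}}\sum_{w_J\in V_J}(-1)^{\angles{v,\, w_J+v'}}\,
    f\big(x'+h^{\,w_J+v'}\big),
\]
a sum over the $2^k$ sign patterns of the representatives $j_1,\ldots,j_k$, with all remaining signs fixed by $v'$. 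This algebraic identity holds verbatim with $f$ replaced by any function, so it suffices to produce an $F$ smooth near $x$ that agrees with $f$ at every point $x'+h^{\,w_J+v'}$.

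The geometric core — and the step I expect to be the main obstacle — is to show that all these points lie in the closure of one top-dimensional simplex of the triangulation of $[0,1]^d$, uniformly in $x'$ and $h$. I would argue blockwise. For $x'$ with $d(x')=d(x)$ the coordinates in a block $P_m$ share a common value $t'_m$, and distinct blocks have distinct values; for $\eps$ small the shifted values $t'_m\pm h$ keep the blocks separated inside $(0,1)$. Within $P_m$ the vector $v'$ splits the $H$-coordinates into a \emph{large} group (moved to $t'_m+h$) and a \emph{small} group (moved to $t'_m-h$), while the single representative $j_m$ slides between $t'_m+h$ and $t'_m-h$ as $w_{j_m}$ varies. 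Ordering the coordinates first by block value and, inside each block, as $[\text{small }H\text{-coordinates}]\le j_m\le[\text{large }H\text{-coordinates}]$ (with any fixed internal order) defines a permutation $\tau$; its simplex $\overline{\sigma}_\tau=\{\,y_{\tau(1)}\le\cdots\le y_{\tau(d)}\,\}$ contains $x$, and, by this ``representative in the middle'' placement, contains $x'+h^{\,w_J+v'}$ for \emph{every} $w_J\in V_J$: putting $j_m$ at $t'_m+h$ only ties it to the large group, putting it at $t'_m-h$ only ties it to the small group, so both positions satisfy the defining inequalities of $\overline{\sigma}_\tau$.

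Finally I would invoke smoothness on simplices. Since $f$ is smooth on $\overline{\sigma}_\tau$, its restriction $f|_{\overline{\sigma}_\tau}$ extends to a function $F$ smooth on an open neighbourhood $U$ of $\overline{\sigma}_\tau$; after shrinking $U$ to a neighbourhood of $x$ and choosing $\eps$ so small that every cube $\{x'+h^w\mid w\in V\}$ stays in the domain of $F$ and all blocks remain separated, continuity gives $f=F$ on $\overline{\sigma}_\tau$, hence $f\big(x'+h^{\,w_J+v'}\big)=F\big(x'+h^{\,w_J+v'}\big)$ for all $w_J$. Substituting into the collapsed expression and applying the identical collapse to $F$ yields the asserted equality. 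The only substantial point is the combinatorial claim of the previous paragraph; the collapse and the smooth extension are routine.
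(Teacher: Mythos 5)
Your proposal is correct and follows essentially the same route as the paper's proof: the orthogonality collapse of the alternating sum over $V_H$ to the $2^k$ points $x'+h^{w_J+v'}$, followed by the observation that the ``representative in the middle'' ordering within each block (small $H$-coordinates $\le j_m \le$ large $H$-coordinates, blocks separated for small $\epsilon$) places all these points in the closure of a single simplex $S_\sigma$, where smoothness on simplices supplies the extension $F$. The only difference is cosmetic: you carry the $\tfrac{1}{2^d}$ normalisation from \cref{limit-diskret-fourier} through the computation (arriving at $\tfrac{1}{2^k}$), which is in fact more faithful to the definition than the paper's displayed constant $2^{|H|}$.
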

\begin{proof}
    We choose $\epsilon >0$ small enough 
    such that $\epsilon < \frac{1}{4} |x_i - x_j|$
    for all $i,j$ with $x_i \neq x_j$ and set 
    $U' = \{x' \in |I^d| \mid |x' - x| \leq \epsilon\}$.
    For each $x' \in U', d(x') = d(x)$ we have
    \begin{align*}
        \sum_{v_H \in V_H} (-1)^{\angles{v_H,v'}}\Delta_h^{v+v_H}(f)(x') 
        & = \sum_{v_H \in V_H} (-1)^{\angles{v_H,v'}}\sum_{w \in \IF_2^d}
        (-1)^{\angles{v+v_H,w}} f^h_{x'}(w) \\
        & = \sum_{\substack{v_H, w_H \in V_H\\w_J \in V_J}}
        (-1)^{\angles{v_H,v'}+\angles{v+v_H, w_H}+\angles{v,w_J}} f^h_{x'}(w_J+w_H) \\
        & = \sum_{\substack{w_H \in V_H \\ w_J \in V_J }}
        2^{|H|}\delta_{v', w_H}(-1)^{\angles{v, w_H}+\angles{v, w_J}}f^h_{x'}(w_J+w_H) \\
        & = 2^{|H|}\sum_{w_J \in V_J}
        (-1)^{\angles{v, w_J+v'}}f(x'+h^{w_J+v'}).
    \end{align*}
    The points occurring in the last term, 
    \[ Q:= \{x' + h^{w_J + v'} \mid w_J \in V_J, h < \epsilon, 
            x' \in U', d(x')=d(x) \}, \]
    all lie in the same simplex:
    For all points $q \in Q$ the coordinates $q=(q_1, \ldots, q_d) \in \IR^d$ 
    satisfy the inequalities
    \begin{align*}
        q_{k_1} &< q_{k_2} \quad \textrm{ if } x_{k_1} < x_{k_2}, \\
        q_j &\leq q_k \quad \textrm{ if } x_j = x_k, j \in H, v_j = 1, \\
        q_j &\geq q_k \quad \textrm{ if } x_j = x_k, j \in H, v_j = 0. \\
    \end{align*}
    The coordinates of the points in $P$ can therefore be simultaneously sorted by one
    permutation $\sigma \in S_d$ and thus all points of $Q$ are in the 
    corresponding simplex $S_\sigma \subseteq |I^d|$ (see \cref{sk-simpl-perm}).
    Since $f$ is smooth on simplices, there is an open neighbourhood
    $U'' \subseteq S_\sigma$ and a smooth continuation $F$ of $f$.
    By choosing $U:=U' \cap U''$ we get the proposition.
\end{proof}
\begin{kor}
    \label{limit-sms-konv-hilf2}
    Let $f \in \sms(I^d)$, $v \in \IF_2^d$ and $x \in |I^d|$.
    Let $J,H \subseteq \{1, \ldots, d\}$ as in \cref{limit-sms-konv-hilf}.
    Then there exists an open neighbourhood $U \subseteq |I^d|$ of $x$
    and smooth functions $(F_{v_H})_{v_H \in V_H}$ on $U$ such that
    \[ \Delta_h^v(f)(x') = \sum_{v_H \in V_H} \Delta_h^{v+v_H}(F_{v_H})(x') \]
    holds for all $x' \in U$ with $d(x')=d(x)$.
\end{kor}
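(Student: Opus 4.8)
The plan is to read the left-hand side of \cref{limit-sms-konv-hilf} as a discrete Fourier transform on the group $V_H \simeq \IF_2^{|H|}$ and to recover $\Delta_h^v(f)(x')$ by Fourier inversion. Concretely, for fixed $x'$ and $h$ regard the assignment $v_H \mapsto \Delta_h^{v+v_H}(f)(x')$ as a function $g$ on $V_H$; then the expression $\sum_{v_H \in V_H}(-1)^{\angles{v_H,v'}}\Delta_h^{v+v_H}(f)(x')$ from \cref{limit-sms-konv-hilf} is exactly the Fourier coefficient $\hat g(v')$. Since $\Delta_h^v(f)(x')$ is the value $g(0)$, the inversion formula on $\IF_2^{|H|}$ recovers it as the average $\frac{1}{2^{|H|}}\sum_{v' \in V_H}\hat g(v')$, and the game is then to re-sum this into the asserted shape.

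First I would apply \cref{limit-sms-konv-hilf} once for every $v' \in V_H$, obtaining for each such $v'$ a radius $\epsilon_{v'}>0$, an open neighbourhood $U_{v'}$ of $x$ and a smooth function $G_{v'}$ on $U_{v'}$ with
\[
    \sum_{v_H \in V_H}(-1)^{\angles{v_H,v'}}\Delta_h^{v+v_H}(f)(x')
    = \sum_{v_H \in V_H}(-1)^{\angles{v_H,v'}}\Delta_h^{v+v_H}(G_{v'})(x')
\]
for all $x' \in U_{v'}$ with $d(x')=d(x)$ and $h<\epsilon_{v'}$. Setting $U:=\bigcap_{v'}U_{v'}$ and $\epsilon:=\min_{v'}\epsilon_{v'}$ (a finite intersection and minimum, hence still an open neighbourhood and a positive radius), I would insert these identities into the inversion formula. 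Using that each operator $\Delta_h^{w}$ is linear in its function argument, the double sum rearranges as
\[
    \Delta_h^v(f)(x')
    = \frac{1}{2^{|H|}}\sum_{v' \in V_H}\sum_{v_H \in V_H}(-1)^{\angles{v_H,v'}}\Delta_h^{v+v_H}(G_{v'})(x')
    = \sum_{v_H \in V_H}\Delta_h^{v+v_H}(F_{v_H})(x'),
\]
where I set $F_{v_H}:=\frac{1}{2^{|H|}}\sum_{v' \in V_H}(-1)^{\angles{v_H,v'}}G_{v'}$. Each $F_{v_H}$ is a finite linear combination of the smooth $G_{v'}$ and is therefore smooth on $U$, which is the claimed family.

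The computation is essentially bookkeeping; the only point needing care is the Fourier inversion over $\IF_2^{|H|}$, i.e. that $\frac{1}{2^{|H|}}\sum_{v'}(-1)^{\angles{u_H,v'}}\hat g(v') = g(u_H)$, which rests on the orthogonality relation $\sum_{v' \in V_H}(-1)^{\angles{u_H+w_H,v'}} = 2^{|H|}\delta_{u_H,w_H}$. The genuine content is already discharged by \cref{limit-sms-konv-hilf}, whose proof localises each of the combinations to a single simplex and replaces $f$ there by a smooth continuation; the present corollary merely repackages those simplex-wise smoothings, one per Fourier mode $v'$, into a single family indexed by the shift $v_H$. I expect the main (minor) obstacle to be purely notational: keeping the two $V_H$-indices — the mode $v'$ supplied by \cref{limit-sms-konv-hilf} and the shift $v_H$ appearing in the target identity — cleanly separated while interchanging the order of summation.
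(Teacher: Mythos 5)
Your proposal is correct and follows essentially the same route as the paper: apply \cref{limit-sms-konv-hilf} once per Fourier mode $v'\in V_H$, invert the discrete Fourier transform on $\IF_2^{|H|}$, interchange the sums, and absorb the modes into the smooth functions $F_{v_H}$. (Your normalisation $\frac{1}{2^{|H|}}$ is in fact the correct one; the paper's displayed factor $2^{|H|}$ is a sign/exponent slip.)
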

\begin{proof}
    \begin{par}
        According to \cref{limit-sms-konv-hilf} there is an open neighbourhood $U$
        of $x$, an $\epsilon >0$ and for each $w_H \in V_H$ a smooth function $G_{w_H}$
        on $U$ such that
        \begin{align*}
            \sum_{v_H \in V_H}(-1)^{\angles{v_H,w_H}}\Delta_h^{v+v_H}(f) 
            & = \sum_{v_H \in V_H}(-1)^{\angles{v_H,w_H}}\Delta_h^{v+v_H}(G_{w_H})
        \end{align*}
        holds for each $x' \in U$ with $d(x')=d(x)$ and each $h < \epsilon$.
    \end{par}
    \begin{par}
        By an application of the Fourier transform we get
        \begin{align*}
            \Delta_h^v(f)(x') 
            & = 2^{|H|}\sum_{w_H \in V_H} \left(
                \sum_{v_H \in V_H} (-1)^{\angles{v_H,w_H}}\Delta_h^{v+v_H}(f)
            \right)\\
            & = 2^{|H|}\sum_{w_H \in V_H} \left(
                \sum_{v_H \in V_H} (-1)^{\angles{v_H,w_H}}\Delta_h^{v+v_H}(G_{w_H})
            \right)\\
            & = \sum_{v_H \in V_H} \Delta_h^{v+v_H}\left(
                \sum_{w_H \in V_H} 2^{|H|}(-1)^{\angles{v_H,w_H}}G_{w_H}
            \right)\\
            & = \sum_{v_H \in V_H} 
            \Delta_h^{v+v_H}(F_{v_H}) \\
            \intertext{with}
            F_{v_H}&:= \sum_{w_H \in V_H} 2^{|H|}(-1)^{\angles{v_H, w_H}}G_{w_H}.
        \end{align*}
    \end{par}
\end{proof}

\begin{proof}[Proof of \cref{limit-sms-konvergenz}]
    Denote the partition $d(x)$ by $d(x)=\{A_1, \ldots, A_l\}$.
    From each block $A_j$ we choose an element $j_i \in A_i$ 
    with $v_{j_i} = 0$, in case such an element exists.
    Otherwise we choose an arbitrary element.
    The subset $J:=\{j_1, \ldots, j_l\} \subseteq \{1, \ldots,d \}$
    suffices the conditions of \cref{limit-sms-konv-hilf2}.
    Therefore there is an open neighbourhood $U$ of $x$
    and smooth functions $F_{v_H}$ for each $v_H \in V_H$ such that
    \[ \Delta_n^v(f)(x') = \sum_{v_H \in V_H}\Delta_n^{v+v_H}(F_{v_H}) \]
    holds.
    The minimal value of the set $\{|v+v'|, v' \in V_H\}$ is reached only with
    $v'=w:=\pr_J(v)$. Then $|v+w| = \alpha(x,v)$ and by \cref{limit-smq-konvergenz}
    for each $x' \in U$ with $d(x') = d(x)$ we get
    \begin{align*}
        \lim_{n \to \infty}n^{\alpha(x,v)}\Delta_n^v(f)(x) 
        & = \sum_{v' \in V_H}\lim_{n \to \infty}n^{\alpha(x,v)}\Delta_n^{v+v'}(F_{v'})(x') \\
        & = D^v(F_w)(x')
    \end{align*}
    and thus the proposition.
\end{proof}

\begin{par}
    Motivated by \cref{limit-smq-konvergenz} and \cref{limit-sms-konvergenz} 
    we define the generalised differential:
\end{par}

\begin{defn}
    Let $f \in \sms(\Gamma^d)$ be a continuous function on $|\Gamma^d|$.
    If for $x \in |\Gamma^d|$, $v \in \IF_2^d$ and $\alpha \in \IN$ 
    the limit 
    \[ 
        D_\alpha^v(f)(x):= \lim_{h \to 0} \frac{1}{h^{\alpha}}\Delta_h^v(x) 
    \]
    exists, we call $f$ \emph{differentiable} at $x$ to $v$ in degree $\alpha$
    and $D_\alpha^v$ the \emph{generalised differential} of $f$ to $v$ in degree $\alpha$.
\end{defn}

\begin{par}
    By \cref{limit-sms-konvergenz}, for each $f \in \sms(I^d)$, $v \in \IF_2^d$ and each
    point
    $x \in |I^d|\inner$ the generalised differential 
    in degree $\alpha:=\alpha(d(x),v)$ exists.
    For $d=2$ we can give a connection with the term $\delta(f)$ 
    defined by Zhang in \cite[3.4]{zhang}:
\end{par}

\begin{bsp}
    \label{limit-zhang-delta}
    Let $d=2$ and $f \in \sms(I^2)$. 
    By the diagonal 
    $\diag = \diag(\{\{1,2\}\}) = \{(x_1,x_2) \in (0,1) \mid x_1 = x_2)\}$ 
    the square (\cref{square}) is split into two triangles $S^+,S^- \subseteq |I^2|$,
    on each of which the function $f$ is smooth. 
    By $S^+$ we denote the upper triangle, see \cref{square}.
    \begin{center}
        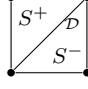
\begin{figure}
        \begin{tikzpicture}
          [place/.style={circle,fill,inner sep=0pt,minimum size=1mm}]
            \node[place] (sw) at (0,0) {};
            \node[place] (se) at (1,0) {};
            \node[place] (nw) at (0,1) {};
            \node[place] (ne) at (1,1) {};
            \node at (0.30,0.75) {\footnotesize$S^+$};
            \node at (0.75,0.25) {\footnotesize$S^-$};
            \draw (nw) -- (sw) -- (se) -- (ne) -- (nw);
            \draw (sw) -- (ne);
            \node at (0.80,0.65) {\tiny$\diag$};
        \end{tikzpicture}
        \caption{Square split into two triangles $S^+,S^-$ by diagonal
        $\diag$.}
        \label{square}
    \end{figure}
    \end{center}
    As in Zhang \cite[3.4]{zhang} let $f^+$ and $f^-$ denote
    a continuation of the smooth function $f\mid_{S^+}$ resp. 
    $f\mid_{S^-}$ above the diagonal. 
    For each point $x \in \diag$ and each $0<h \in \IR$ small enough,
    \cref{limit-sms-konv-hilf} implies
    \begin{align*}
        \Delta_h^{(1,1)}(f)(x) + \Delta_h^{(1,0)}(f)(x)
        & = \Delta_h^{(1,1)}(f^+)(x) + \Delta_h^{(1,0)}(f^+)(x), \\
        \Delta_h^{(1,1)}(f)(x) - \Delta_h^{(1,0)}(f)(x)
        & = \Delta_h^{(1,1)}(f^-)(x) - \Delta_h^{(1,0)}(f^-)(x).
    \end{align*}
    We conclude
    \begin{align*}
        \Delta_h^{(1,1)}(f)(x)
        &= \frac{1}{2}\left( \Delta_h^{(1,1)}(f^+)(x) + \Delta_h^{(1,0)}(f^+)(x)
            +  \Delta_h^{(1,1)}(f^-)(x) - \Delta_h^{(1,0)}(f^-)(x)
        \right) \\
        &= \frac{1}{2}\Delta_h^{(1,1)}(f^+ + f^-)(x) +
        \frac{1}{2}\Delta_h^{(1,0)}(f^+-f^-)(x)
    \end{align*}
    and therefore the following limit converges:
    \begin{align*}
        \lim_{h \to 0} \frac{1}{h}\Delta_h^{(1,1)}(f)(x) 
        &= \lim_{h \to 0} \frac{1}{2h} \Delta_h^{(1,0)}(f^+-f^-)(x) \\
        &= \frac{1}{2} \frac{\partial}{\partial x_1} (f^+ - f^-)(x).
    \end{align*}
    Using the notation of Zhang
    $\delta(f):=\frac{\partial}{\partial x_1} (f^+ - f^-)$ 
    this means for each $x \in \diag$:
    \[ D_1^{(1,1)}(f)(x) = \frac{1}{2} \delta(f)(x). \]
\end{bsp}
\begin{par}
    To conclude this section we study a discretization of the 
    generalised differential.
    Let $n \in \IN$. We subdivide the standard cube into cubes
    with edges of length $\frac{1}{n}$. Let $x \in [0,1]^d$ 
    be a point in the standard cube given by its
    coordinates $x=(x_1, \ldots x_d) \in \IR^d$. Let $\floors{x}$ be the vector
    $(\floors{x_1}, \ldots, \floors{x_d})$, where $\floors{\cdot}$ denotes the 
    usual floor function.
    Then the cube of edge length $\frac{1}{n}$ surrounding the point $x$ has 
    the centre coordinates 
    $\tilde x^{(n)}:= \frac{1}{n} \floors{nx} + \frac{1}{2n}\einsvek$.
\end{par}

\begin{defn}
    \mbox{}
    \begin{enumerate}[(i)]
        \item
            Let $f \in \sms(I^d)$ be a continuous function on $[0,1]^d$ and $x \in
            (0,1)^d$. 
            We call the term
            \[ 
                \tilde \Delta_n^vf(x) := \Delta_{1/2n}^v f(\tilde x^{(n)}) =
                \Delta_{1/2n}^v f\Big(\frac{1}{n}
                \floors{nx} + \frac{1}{2n}\einsvek\Big)
            \]
            the \emph{$n$\nobreakdash-th lattice approximation of the derivative to $v \in \IF_2^d$
                at $x$}.
        \item
            Let $\Gamma$ be a graph and $f \in \sms(\Gamma^d)$ a continuous function on
            $|\Gamma^d|$ and $x \in |\Gamma^d|\inner$ an inner point.
            As in \cref{limit-diskret-fourier} there are unique $\gamma \in \Gamma_1^d$,
            $x' \in (I^d)\inner$, such that $(i_\gamma)_*(x')=x$.
            The \emph{$n$\nobreakdash-th lattice approximation of the derivative to $v \in \IF_2^d$
            at $x$} is defined by
            \[ \tilde \Delta_n^vf(x) := 
                \tilde\Delta_n^v( f \circ (i_\gamma)_*)(\tilde x').
            \]
    \end{enumerate}
\end{defn}
\begin{prop}
    \label{limit-tdelta-konv}
    Let $f \in \smd{\Gamma^d}$ be a function smooth on simplices, $x \in |\Gamma^d|\inner$
    an inner point and $\alpha:=\alpha(\Part(x), v)$ as in \cref{limit-sms-konvergenz},
    then
    \[ \lim_{n \to \infty} (2n)^\alpha\tilde\Delta_n^v(x) = D_{v,\alpha}f(x)\]
    holds.
\end{prop}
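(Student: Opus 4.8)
The strategy is to read $(2n)^\alpha\tilde\Delta_n^v f(x)$ as the rescaled discrete Fourier transform $\tfrac{1}{h^\alpha}\Delta_h^v f$ evaluated at a base point that drifts towards $x$, and to trade this drifting evaluation for the fixed–point limit $D_{v,\alpha}f(x)=\lim_{h\to0}\tfrac{1}{h^\alpha}\Delta_h^vf(x)$ by means of a \emph{uniform} version of \cref{limit-smq-konvergenz} together with the continuity supplied by \cref{limit-sms-konvergenz}. As in the proofs of \cref{limit-smq-konvergenz} and \cref{limit-sms-konvergenz} it suffices to treat $\Gamma=I$: the inner point $x$ factors uniquely as $(i_\gamma)_*(x')$ with $x'\in|I^d|\inner$, and both $\tilde\Delta_n^v$ and $D_{v,\alpha}$ are defined by pulling back along $(i_\gamma)_*$. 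Writing $h_n:=\tfrac{1}{2n}$ and $y_n:=\tilde x^{(n)}=\tfrac1n\floors{nx}+\tfrac{1}{2n}\einsvek$, the definition of the lattice approximation gives
\[
    (2n)^\alpha\tilde\Delta_n^v f(x)=\frac{1}{h_n^{\alpha}}\,\Delta_{h_n}^v f(y_n),\qquad h_n\to0.
\]

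First I would record the behaviour of the base point. From $\tfrac1n\floors{nx_i}\in[x_i-\tfrac1n,\,x_i]$ one gets $|y_{n,i}-x_i|\le\tfrac{1}{2n}$, so $y_n\to x$; in particular the samples $y_n+h_n^w$ stay in the interior for $n$ large, since $x$ is an inner point. Moreover $d(y_n)=d(x)$ for all large $n$: if $x_i=x_j$ then $\floors{nx_i}=\floors{nx_j}$ and hence $y_{n,i}=y_{n,j}$, while if $x_i\ne x_j$ then $n|x_i-x_j|>1$ once $n>1/\min_{x_k\ne x_l}|x_k-x_l|$, which forces $\floors{nx_i}\ne\floors{nx_j}$ and thus $y_{n,i}\ne y_{n,j}$. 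So for $n$ large the moving point $y_n$ lies on the same generalised diagonal as $x$ and inside any prescribed neighbourhood of $x$.

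Next I would produce a uniform estimate near $x$. By \cref{limit-sms-konv-hilf2}, with the index set $J$ chosen one representative per block of $d(x)$ exactly as in the proof of \cref{limit-sms-konvergenz} (so that $\min_{v_H\in V_H}|v+v_H|=\alpha$ is attained uniquely at $v_H=\pr_H(v)$), there are a neighbourhood $U$ of $x$ and smooth functions $(F_{v_H})_{v_H\in V_H}$ on $U$ with
\[
    \Delta_{h}^v f(x')=\sum_{v_H\in V_H}\Delta_{h}^{v+v_H}(F_{v_H})(x')
    \qquad\text{for all }x'\in U\text{ with }d(x')=d(x).
\]
Carrying out the Taylor computation of \cref{limit-smq-konvergenz} for each smooth $F_{v_H}$ on a compact neighbourhood of $x$ — where the remainder is uniform because all derivatives of $F_{v_H}$ are bounded — yields $\tfrac{1}{h^{|v+v_H|}}\Delta_h^{v+v_H}(F_{v_H})=D^{v+v_H}F_{v_H}+\smallo{1}$ uniformly. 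Dividing the displayed identity by $h^{\alpha}$ and using $|v+v_H|\ge\alpha$ with equality only at $v_H=\pr_H(v)$, every summand carries a factor $h^{|v+v_H|-\alpha}$; the terms with $|v+v_H|>\alpha$ tend to $0$ uniformly and only the minimal term survives, so
\[
    \frac{1}{h^{\alpha}}\Delta_h^v f\longrightarrow D^{\,v+\pr_H(v)}F_{\pr_H(v)}
    \qquad\text{uniformly on }\{x'\in U:\ d(x')=d(x)\}\text{ as }h\to0.
\]
Since the pointwise limit of $\tfrac{1}{h^\alpha}\Delta_h^vf(x')$ is by definition $D_{v,\alpha}f(x')$, the uniform limit is the function $D_{v,\alpha}f$, which is continuous on the generalised diagonal by \cref{limit-sms-konvergenz}.

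Finally I would combine the two ingredients. For large $n$ the point $y_n$ lies in $U$ and satisfies $d(y_n)=d(x)$, so
\[
    \bigl|(2n)^\alpha\tilde\Delta_n^vf(x)-D_{v,\alpha}f(x)\bigr|
    \le\sup_{d(x')=d(x)}\Bigl|\tfrac{1}{h_n^{\alpha}}\Delta_{h_n}^vf(x')-D_{v,\alpha}f(x')\Bigr|
    +\bigl|D_{v,\alpha}f(y_n)-D_{v,\alpha}f(x)\bigr|,
\]
and the first summand tends to $0$ by the uniform convergence ($h_n\to0$) while the second tends to $0$ by continuity of $D_{v,\alpha}f$ and $y_n\to x$; this is the assertion. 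I expect the only genuine obstacle to be the bookkeeping of uniformity: one must verify that the Taylor remainders for the smooth $F_{v_H}$ are uniform on a fixed compact neighbourhood of $x$, and — the point that makes the drifting evaluation legitimate — that the local representation of \cref{limit-sms-konv-hilf2} really applies at the moving base point, which is exactly what the floor computation $d(y_n)=d(x)$ guarantees.
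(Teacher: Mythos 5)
Your proof is correct and follows essentially the same route as the paper: reduce to $\Gamma=I$, observe that $\tilde x^{(n)}$ converges to $x$ along the same generalised diagonal, upgrade the convergence of \cref{limit-sms-konvergenz} to uniform convergence on a compact neighbourhood of $x$, and finish with the continuity of $D^v_\alpha(f)$ there. You are in fact somewhat more careful than the paper, which simply asserts that the pointwise convergence is uniform ``since $V$ is compact''; your explicit derivation of the uniformity from the smooth local pieces $F_{v_H}$ of \cref{limit-sms-konv-hilf2} and their uniform Taylor remainders supplies exactly the justification that assertion needs.
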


\begin{proof}
    It suffices to study $I=\Gamma$. Let $N \in \IN$ big enough such that
    the closed line 
    $V:=[ \tilde x^{(N)} - \frac{1}{2N}\einsvek, \tilde x^{(N)} + \frac{1}{2N}\einsvek] $ 
    lies in $I^d$ and $d(x')=d(x)$ holds for all $x \in V$.
    Then for each $x \in V$ and each $n \geq N$ we have
    $\tilde x^{(n)} \in V$. 
    By \cref{limit-sms-konvergenz} there exists
    $D_\alpha^v(f)(x) = \lim_{h \to 0} \frac{1}{h^\alpha}\Delta_h^vf(x)$ 
    for each $x \in V$ and since $V$ is compact this convergence is uniform.
    Again by \cref{limit-sms-konvergenz} the function $D^v_\alpha(f)$ is continuous on $V$
    and thus
    \begin{align*}
        \lim_{n \to \infty} (2n)^\alpha \Delta^v_n(\tilde x^{(n)}) 
        & = \lim_{m \to \infty}\lim_{n \to \infty} (2n)^\alpha \Delta^v_n(\tilde x^{(m)}) \\
        & = \lim_{m \to \infty} D^v_\alpha(f)(\tilde x^{(m)}) = D^v_\alpha(f)(x).
    \end{align*}
\end{proof}

\begin{lem}
    \label{limit-subint-lem}
    Let $n \in \IN$ and $f \in \sms(\Gamma^d)$ be a function with
    $f(x)=f(\tilde x^{(n)})$ for each $x \in |\Gamma^d|$.
    Then for each partition $\Part$ of $\{1, \ldots, d\}$ the equation
    \begin{equation}
        \label{limit-subint-lem-eq}
        \int_{\Gamma^d} f \charfkt_{\{x \mid \Part(\tilde x^{(n)})=\Part\}} = 
        n^{|\Part|-d} \int_{\diag_\Part} f 
    \end{equation}
    holds.
\end{lem}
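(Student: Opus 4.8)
The plan is to reduce at once to the standard graph $\Gamma = I$ and then to treat both sides of \cref{limit-subint-lem-eq} as finite lattice sums, exploiting that the hypothesis forces $f$ to be constant on the sub-cubes of edge length $1/n$. First I would carry out the reduction. By definition both $\int_{\Gamma^d}(\cdot)$ and $\int_{\diag_\Part(\Gamma^d)}(\cdot)$ are sums over the tuples $\gamma \in \Gamma_1^d$ of the corresponding integrals of the pullback along $(i_\gamma)_*$. The centre $\tilde x^{(n)}$, the partition $\Part(\tilde x^{(n)})$, and hence the indicator $\charfkt_{\{\Part(\tilde x^{(n)})=\Part\}}$ are all defined chartwise through the embeddings $(i_\gamma)_*$, so they are compatible with this decomposition; likewise the hypothesis pulls back to the statement that each $f \circ (i_\gamma)_*$ is constant on the sub-cubes of $I^d$. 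It therefore suffices to prove the identity for $\Gamma = I$, where $|I^d| \simeq [0,1]^d$ and $f$ is a genuine step function.

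Next I would rewrite the left-hand side as a lattice sum. Subdivide $[0,1]^d$ into the $n^d$ cubes $Q_{\mathbf{k}} = \prod_i [k_i/n,(k_i+1)/n]$ indexed by $\mathbf{k} \in \{0,\dots,n-1\}^d$. On the interior of $Q_{\mathbf{k}}$ the centre map is constant, $\tilde x^{(n)} = \tfrac1n \mathbf{k} + \tfrac1{2n}\einsvek$, so $f$ equals $f(\tilde x^{(n)})$ there, while $\Part(\tilde x^{(n)})$ is the partition recording which coordinates $k_i$ agree. Since each $Q_{\mathbf{k}}$ has volume $n^{-d}$, the left-hand side equals $n^{-d}$ times the sum of the centre-values $f(\tilde x^{(n)})$ over exactly those cubes $Q_{\mathbf{k}}$ whose index is constant along the blocks of $\Part$.

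For the right-hand side I would pass to the chart $\textrm{c}_\Part \colon \diag_\Part(I^d) \to [0,1]^{|\Part|}$ of \cref{limit-diag-karte}, writing $\int_{\diag_\Part} f = \int_{[0,1]^{|\Part|}} f \circ \textrm{c}_\Part^{-1}\, d\mu$. Because $\textrm{c}_\Part^{-1}(t)$ has its $i$-th coordinate equal to the block-value $t_m$ whenever $i$ lies in the $m$-th block, the composite $f \circ \textrm{c}_\Part^{-1}$ depends only on the floors $\floors{n t_m}$ and is again a step function, now on the $|\Part|$-dimensional lattice of edge $1/n$; on the cell indexed by $\mathbf{c}$ its value is $f$ evaluated at the centre of the cube $Q_{\mathbf{k}}$ with $k_i = c_m$ for $i$ in block $m$. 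The assignment $\mathbf{c} \mapsto \mathbf{k}$ identifies these cells with the block-constant cubes $Q_{\mathbf{k}}$ and matches centres on both sides. Each $|\Part|$-dimensional cell contributes volume $n^{-|\Part|}$, so the right-hand side equals $n^{|\Part|-d}\cdot n^{-|\Part|}$ times the same sum of centre-values that appeared on the left, i.e.\ $n^{-d}$ times that sum, yielding the asserted equality.

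The hard part is the bookkeeping in this last comparison: one must verify that under $\textrm{c}_\Part$ the lattice cells of $[0,1]^{|\Part|}$ correspond to the block-constant cubes of $[0,1]^d$ with matching centres (so that the step-function values agree), and that the measure normalisation produces \emph{exactly} the codimension factor $n^{|\Part|-d}$. The genuinely delicate point is the treatment of the cells whose block-values $t_m$ collide modulo the lattice, that is where several floors $\floors{n t_m}$ coincide although the $t_m$ are distinct: these must be handled consistently on both sides so that the generic diagonal cells (distinct block-values, defining $\diag_\Part$) and the cubes of type exactly $\Part$ are put in bijection. Once this correspondence is pinned down, together with the independence of $\textrm{c}_\Part$ from the chosen block representatives recorded in \cref{limit-diag-karte}, the equality of the two normalised sums is immediate.
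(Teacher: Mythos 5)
Your strategy---reduce to $\Gamma=I$, observe that $f$ is a step function on the $n^d$ sub-cubes $Q_{\mathbf k}$, convert both sides into normalised sums of centre-values and compare them through the chart $\textrm{c}_\Part$ of \cref{limit-diag-karte}---is exactly the route the paper takes, and the reduction to $I$ together with the evaluation of the left-hand side as $n^{-d}$ times the sum of centre-values over the cubes whose index vector has pattern exactly $\Part$ is fine.

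The problem is the point you yourself single out as ``genuinely delicate'' and then wave through. The cells of $[0,1]^{|\Part|}$ on which several floors $\floors{n t_m}$ coincide have \emph{positive} measure; they contribute to $\int_{\diag_\Part} f$ the values of $f$ on cubes $Q_{\mathbf k}$ whose centre-pattern is strictly \emph{coarser} than $\Part$, and these have no counterpart in the left-hand sum. There is no bijection to be ``pinned down'': the right-hand sum simply has extra terms. Concretely, for $d=2$, $\Part=\{\{1\},\{2\}\}$ and $f\equiv 1$, the left-hand side of \cref{limit-subint-lem-eq} equals $1-\tfrac{1}{n}$ (the measure of the off-diagonal cubes) while the right-hand side equals $n^{0}\int_{[0,1]^2}1=1$; so the asserted equality of the two normalised sums, and hence your final step, fails for general cube-constant $f$. (The paper's own proof commits the same slip when it replaces the sum over lattice points of pattern $\Part$ in $[0,1]^d$ by the sum over \emph{all} lattice points of $[0,1]^{|\Part|}$.) The identity does hold, and your cell-by-cell comparison does become a genuine bijection, once $f$ is additionally assumed to vanish on every cube whose centre-pattern is coarser than $\Part$---equivalently, once the indicator $\charfkt_{\{d(\tilde x^{(n)})=\Part\}}$ is also inserted under the integral on the right-hand side. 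That is precisely the form in which the lemma is invoked in the proof of \cref{limit-allg}, where the integrand already carries this indicator; you should either add that hypothesis or restate the conclusion with the indicator on both sides before closing the argument.
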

\begin{proof}
    By definition of the integral it suffices to show this lemma 
    for the standard graph $\Gamma=I$.
    Since $f$ is constant on the set $\{x \in |I|^d \mid \tilde x^{(n)} = y\}$
    for each $y \in \left(\frac{1}{n}\IZ \cap [0,1]\right)^d$,
    we get
    \[ \int_{I^d} f \charfkt_{\{x \mid \Part(\tilde x^{(n)})=\Part\}} 
        = \frac{1}{n^d} \sum_{\substack{
            x \in \left(\frac{1}{n}\IZ \cap[0,1]\right)^d \\
            d(x)=\Part
        }} f(x).
    \]
    Using the chart 
    $c_\Part: \diag(\Part) \to \IR^{|\Part|}$ 
    of the generalised diagonal $\diag(\Part)$ from \cref{limit-diag-karte}
    we conclude
    \begin{align*}
         \int_{I^d} f \charfkt_{\{x \mid \Part(\tilde x^{(n)})=\Part\}}
         &= \frac{1}{n^d} \sum_{
             x \in \left(\frac{1}{n}\IZ \cap[0,1]\right)^{|\Part|}
         } f(c_\Part^{-1}(x)) \\
         &= \frac{1}{n^{d-|\Part|}} \int_{[0,1]^{|\Part|}} f \circ c_\Part^{-1}\mathrm{d}\mu \\
         &= \frac{1}{n^{d-|\Part|}} \int_{\diag(\Part)} f.
    \end{align*}
\end{proof}

\begin{par}
    Let $\sd_n\Gamma$  denote the $n$\nobreakdash-fold subdivision of the graph
    according to \cref{sk-unt-def} and $\sd_n: |\Gamma_n| \to |\Gamma|$ 
    the canonical homeomorphism of the geometric realizations from \cref{sk-unt-kanon}.
    The $d$-th power of this homeomorphism
    yields
    $(\unt_n)^d: |(\Gamma_n)^d| \to |\Gamma^d|$
    and by functoriality there is an homeomorphism
    \[ (\unt_n)^*: \sms(\Gamma^d) \to \sms((\Gamma_n)^d). \]
    This morphism maps all subsets $\sms(\Gamma^d)$ defined so far
    on their counterparts:
\end{par}
\begin{prop}
    \label{schnitt-delta-kommut}
    The homeomorphism $(\unt_n)^*: \sms(\Gamma^d) \to \sms((\Gamma_n)^d)$ satisfies
    \begin{equation}
        \label{limit-unt-lid-eq}
        \begin{split}
            (\unt_n)^* (\smq(\Gamma^d)) &\subseteq \smq(\Gamma_n^d), \\
            (\unt_n)^* (\smd(\Gamma^d)) &\subseteq \smd(\Gamma_n^d), \\
            (\unt_n)^* (\lid(\Gamma^d)) &\subseteq \lid(\Gamma_n^d). \\
        \end{split}
    \end{equation}
    The diagram
    \begin{equation}
        \label{limit-unt-delta-eq}
        \begin{CD}
            \sms(\Gamma^d) @>(\unt_n)^*>> \sms((\Gamma_n)^d) \\
            @V \tilde\Delta_n^vVV     @V\tilde\Delta^v_1VV \\
            \sms(\Gamma^d) @>(\unt_n)^*>> \sms((\Gamma_n)^d) \\
        \end{CD}
    \end{equation}
    commutes and each function $f \in \sms(\Gamma^d)$ 
    satisfies
    \begin{equation}
        \label{limit-unt-int-eq}
        n^d \int_{\Gamma^d} f = \int_{\Gamma_n^d} (\unt_n)^* f.
    \end{equation}
\end{prop}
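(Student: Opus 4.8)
The plan is to reduce all three assertions to the explicit description of the canonical homeomorphism $(\unt_n)^d$ on charts. Recall from \cref{sk-unt-kanon} that on the standard graph the homeomorphism $\unt_n\colon|\unt_n I|\to|I|=[0,1]$ sends the $k$-th sub-edge (for $k\in\{0,\dots,n-1\}$), with its chart $[0,1]$, onto the subinterval $[k/n,(k+1)/n]$ by $y\mapsto(k+y)/n$. Hence every chart $i_{\gamma'}$ of $(\Gamma_n)^d$ determines a tuple $\gamma\in\Gamma_1^d$ together with a multi-index $k\in\{0,\dots,n-1\}^d$, and the composite $i_\gamma^{-1}\circ(\unt_n)^d\circ (i_{\gamma'})_*\colon[0,1]^d\to[0,1]^d$ is the affine contraction $\phi_k\colon y\mapsto\tfrac1n(k+y)$, carrying the unit cube onto the small cube $\prod_j[k_j/n,(k_j+1)/n]$. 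Thus $(\unt_n)^* f$ is, in the chart indexed by $(\gamma,k)$, just $f\circ i_\gamma\circ\phi_k$. Every claim will follow by inspecting what $\phi_k$ does to cubes, simplices, the lattice difference operators, and the Lebesgue measure.

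For \eqref{limit-unt-lid-eq}: since each $\phi_k$ is affine and maps the unit cube into the single big cube $|I^d|$ of the chart $i_\gamma$, precomposition preserves smoothness on the cube, which gives $(\unt_n)^*\smq(\Gamma^d)\subseteq\smq(\Gamma_n^d)$ immediately. For the simplex-classes $\smd$ and $\lid$ I would first establish the combinatorial compatibility that is the crux of the proposition, namely that each closed simplex of $(\Gamma_n)^d$ is carried by $(\unt_n)^d$ into a single closed simplex of $\Gamma^d$. Using the permutation description of the simplices of a cube from \cref{sk-simpl-perm}, a point in the interior of a small simplex sitting in the cube $k$ has coordinates whose integer parts $\floors{nx_j}=k_j$ are fixed and whose fractional parts are ordered by the permutation defining that small simplex; the induced total order on the $x_j$ (compare integer parts first, break ties by fractional parts) is then constant on the small simplex and singles out one big simplex containing it. Granting this, $f$ restricted to a big simplex and smooth (resp.\ affine) there pulls back under the affine $\phi_k$ to a function smooth (resp.\ affine) on the small simplex, yielding the remaining two inclusions.

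For the commuting square \eqref{limit-unt-delta-eq} I would argue pointwise on inner points. Fix a chart $(\gamma,k)$ and an inner point $y\in(0,1)^d$, set $x=\phi_k(y)=(k+y)/n$, so that $\floors{nx}=k$ and the cube-centre is $\tilde x^{(n)}=\tfrac1n(k+\tfrac12\einsvek)$. Writing $f$ for its representative $f\circ i_\gamma$ in the chart and expanding the right-hand operator with \cref{limit-diskret-fourier} gives
\[
\tilde\Delta_1^v\big((\unt_n)^*f\big)(y)=\frac{1}{2^d}\sum_{w\in\IF_2^d}(-1)^{\angles{v,w}}\, f\big(\phi_k(\tfrac12\einsvek+(\tfrac12)^w)\big),
\]
and the $2^d$ arguments satisfy $\phi_k(\tfrac12\einsvek+(\tfrac12)^w)=\tilde x^{(n)}+(\tfrac1{2n})^w$, i.e.\ they are exactly the corners of the small cube $k$. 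These are precisely the sample points of $\Delta_{1/2n}^v f(\tilde x^{(n)})=\tilde\Delta_n^v f(x)=\big((\unt_n)^*\tilde\Delta_n^vf\big)(y)$, appearing with the identical signs $(-1)^{\angles{v,w}}$; hence the two composites agree.

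Finally, \eqref{limit-unt-int-eq} is the change-of-variables formula for these affine charts. Writing the integral over $\Gamma_n^d$ as the sum over all charts $(\gamma,k)$ of $\int_{[0,1]^d} f\circ i_\gamma\circ\phi_k$, the substitution $x=\phi_k(y)$ has Jacobian $n^{-d}$, so each summand equals $n^{d}\int_{[k/n,(k+1)/n]} f\circ i_\gamma$; summing over the $n^d$ values of $k$ reassembles $n^d\int_{|I^d|}f\circ i_\gamma$, and summing over $\gamma\in\Gamma_1^d$ gives $n^d\int_{\Gamma^d}f$. I expect the only genuine difficulty to be the combinatorial compatibility of small and big simplices needed in the first part; once that is in place, the square and the integral identity are direct computations with the explicit chart $\phi_k$.
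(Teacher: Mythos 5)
Your proposal is correct and follows essentially the same route as the paper's (very terse) proof: reduce to the standard chart, observe that each simplex of the subdivision is carried into a single simplex of $\Gamma^d$ (which the paper gets from \cref{sk-unt-kanon-einb} rather than your direct integer-part/fractional-part argument, though both work), note that the vertices of $(\Gamma_n)^d$ land on the $\tfrac1n$-lattice so the sample points of $\tilde\Delta^v_1$ on a sub-cube are exactly those of $\Delta^v_{1/2n}$ at $\tilde x^{(n)}$, and finish the integral identity by change of variables with Jacobian $n^{-d}$. The only difference is that you supply the details the paper leaves implicit.
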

\begin{proof}
    \begin{par}
        It suffices to proof the claim for $\Gamma=I$.
        The $n$\nobreakdash-fold subdivision of $|I^d|$ is in this case a lattice 
        with distance $1/n$ in $[0,1]^d = |I^d|$.
        Since the image of each simplex of $(I_n)^d$ by $(\unt_n)^*$
        is contained in one simplex of $I^d$, the relations
        \cref{limit-unt-lid-eq} result immediately.
    \end{par}
    \begin{par}
        To proof the commutativity of \cref{limit-unt-delta-eq}
        it suffices to note
	that the edges of $(I_n)^d$ are mapped by $\unt_n$ onto
        the points with rational coordinates
        $[0, 1/n, \ldots, n/n]^d$.
    \end{par}
    \begin{par}
        Equation \cref{limit-unt-int-eq} is an easy implication 
        of integration theory.
    \end{par}
\end{proof}


\subsection{The Intersection Pairing} 

\begin{par}
    Let $X$ be a proper regular strict semi-stable $S$\nobreakdash-curve with total ordering $<$ on
    $X^{(0)}$.
    Let its reduction graph $\Gamma(X)$ be without multiple simplices.
    We denote by $W=W(X,<,d)$ the model of $(X_\eta)^d$ constructed in
    \cref{desi-prodkomp}. 
    The norm $\betr{\cdot}$ is chosen such that $|\pi|=1/b$ 
    for a fixed $b \in \IR_{>0}$.
\end{par}
\begin{par}
    Let $\CH^i_{W_s}(W)$ denote the Chow group with codimension $i$ and support $W_s$. 
    By intersection theory there is a intersection product 
    \[
        \cdot: \CH^i_{W_s}(W) \times \CH^j_{W_s}(W) \to \CH^{i+j}_{W_s}(W)_\IQ,
    \]
    and since the special fibre $W_s$ is regular over a field 
    we have also a local degree map
    \[
        \ldeg: \CH^{d+1}_{W_s}(W) \to \IZ.
    \]
    (For details see \cite[Def 4.6]{publ1}).
    We view this local intersection product of divisors with support in $W_s$
    as pairing between affine functions on the reduction set $\lid(\Gamma(X)^d)$
    using the following isomorphism:
\end{par}
\begin{defn}
    Let $X$ be proper regular strict semi-stable curve and $d \geq 2$.
    We denote by $\cadiv_{W_s}(W)_\IR$ the tensor product
    $\cadiv_{W_s}(W) \otimes_{\IZ}\IR$ and by $\phi^{\betr{\cdot}}_1$
    the morphism 
    \[
        \phi^{\betr{\cdot}}_1: \cadiv_{W_s}(W)_\IR \to \lid(\Gamma(X)^d)
    \]
    given by
    \[
        C\otimes r \mapsto r f^{\betr{\cdot}}_C.
    \]
    By \cref{metr-div-fkt} this is an isomorphism.
\end{defn}

\begin{bem}
    Since the intersection product
    \begin{align*}
        \ldeg(\cdot, \ldots, \cdot): \big(\cadiv_{W_s}(W)\big)^{d+1} &\to \IQ, \\
        (D_0, \ldots, D_d) &\mapsto \ldeg(D_0 \cdot \cdots \cdot D_d)
    \end{align*}
    is a multi-linear mapping, we can continue it by linearity
    on $\cadiv_{W_s}(W)_\IR$. We denote this continuation again by $\ldeg$:
    \[
        \ldeg(\cdot, \ldots, \cdot): \big(\cadiv_{W_s}(W)_\IR \big)^{d+1} \to \IR. 
    \]
\end{bem}
\begin{defn}
    \label{limit-schnittpaar-einfach}
    By
    \begin{align*}
        \multiangles_{W,1}: \lid(\Gamma^d)^{d+1} &\to \IR, \\
        (f_0, \ldots, f_d) &\mapsto \ldeg_{W_s}
        ((\phi^{\betr{\cdot}}_1)^{-1}(f_0) \cdots (\phi^{\betr{\cdot}}_1)^{-1}(f_d))
    \end{align*}
    a multi-linear pairing is defined on $\lid(\Gamma^d)$, the set of 
    piecewise affine functions on the reduction set. This pairing is called
    \emph{intersection pairing}.
\end{defn}

\begin{par}
    By the bijection $(\sd_n)^*:\sms(\Gamma^d) \to \sms(\Gamma_n^d)$ 
    of \cref{limit-unt-lid-eq}
    the set $\lid(\Gamma_n^d)$ can be seen as subset of $\sms(\Gamma^d)$
    containing $\lid(\Gamma^d)$.
    We continue the intersection pairing to functions of $\lid(\Gamma_n^d)$.
    Let $K_n/K$ be an algebraic field extension of degree $n$,
    $R_n$ the ring of integers in $K_n$
    and $S_n:=\spec{R_n}$.
    By $X_n$ we denote the regular strict semi-stable model of $X_\eta \times_S S_n$
    defined in \cref{desi-vbw}. 
    Similarly we denote by $W_n:=W(X_n,<, d)$ the model of $(X_\eta)^d \times_S S_n$
    defined in \cref{desi-prodkomp}.
    Since the reduction sets $\RK(X_n)$ and $\RK(W_n)$ are determined combinatorially,
    they are independent of the choice of $K_n$. 
    We denote by $\betr{\cdot}$ also the unique continuation of the valuation
    $\betr{\cdot}: K \to \IR$ to $K_n$.
\end{par}

\begin{par}
    In this situation \cref{metr-div-fkt} yields again 
    an isomorphism of $\IR$\nobreakdash-algebras 
    by \cref{metr-div-fkt},
    \begin{align*}
        \phi^{\betr{\cdot}}_n: \cadiv_{(W_n)_s}(W_n)_\IR &\to \lid((\Gamma(X)_n)^d), 
        \intertext{defined by}
        C\otimes r &\mapsto r f_C^{\betr{\cdot}}. 
    \end{align*}
\end{par}

\begin{defn}
    \label{limit-schnittpaar-bw}
    By
    \begin{align*}
        \multiangles_{W,n}: \lid(\Gamma_n^d)^{d+1} &\to \IR, \\
        (f_0, \ldots, f_d) &\mapsto 1/n \ldeg_{W_s}\Big((\phi^{\betr{\cdot}}_n)^{-1}(f_0) \cdots
        (\phi^{\betr{\cdot}}_n)^{-1}(f_d)\Big)
    \end{align*}
    a multi-linear pairing is defined on $\lid(\Gamma_n^d)$.
    Since local intersection numbers depend only on the structure of $\RK(W_n)$,
    $\multiangles_{W,n}$ is independent on the choice of $K_n$.
\end{defn}
\begin{prop}
    The pairing from \cref{limit-schnittpaar-bw} is a continuation of 
    the pairing from \cref{limit-schnittpaar-einfach}:
    Let
    $f_0, \ldots, f_d \in \lid(\Gamma^d)$ be functions affine on the simplices of
    $\Gamma^d$.
    Then
    \[ \angles{f_0, \ldots, f_d}_{W,n} = \angles{f_0, \ldots, f_d}_{W,1} \]
    holds.
\end{prop}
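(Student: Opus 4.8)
The plan is to realise both pairings through the single morphism $\varphi\colon W_n \to W$ provided by \cref{metr-mod-morph}, and to trace the normalising factor $1/n$ to the degree of $\varphi$. Concretely, set $D_i := \big(\phi_1\big)^{-1}(f_i) \in \cadiv_{W_s}(W)_\IR$, so that $\angles{f_0, \ldots, f_d}_{W,1} = \ldeg_{W_s}(D_0 \cdots D_d)$ by \cref{limit-schnittpaar-einfach}. The first step is to identify the divisor on $W_n$ attached to the image of $f_i$ under the subdivision embedding $(\sd_n)^*$: I claim that $\big(\phi_n\big)^{-1}\big((\sd_n)^* f_i\big) = \varphi^* D_i$. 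The second step is a projection‑formula computation showing that pulling back along $\varphi$ multiplies the local intersection number by $\deg\varphi = n$, which exactly cancels the factor $1/n$ in \cref{limit-schnittpaar-bw}.

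For the first step I use the functoriality of coordinate functions. Since $\varphi$ is dominant, \cref{metr-koordfkt-funktor} gives $f_{D_i}(\varphi(x)) = f_{\varphi^* D_i}(x)$ for every $x \in W_n(\bar K)$. Writing both coordinate functions through their factorisations over the reduction maps (\cref{metr-div-fkt}) and invoking the compatibility square of \cref{metr-mod-vertr}, whose horizontal map $\tau\colon |\RK(W_n)| \to |\RK(W)|$ is exactly the canonical subdivision homeomorphism $|\sd_n\RK(W)| \to |\RK(W)|$, I obtain $\tilde f_{\varphi^* D_i} = \tilde f_{D_i} \circ \tau = f_i \circ \tau$. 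But precomposition with $\tau$ is precisely the embedding $(\sd_n)^*\colon \lid(\Gamma^d) \hookrightarrow \lid(\Gamma_n^d)$, so $\tilde f_{\varphi^* D_i} = (\sd_n)^* f_i$ and hence $\varphi^* D_i = \big(\phi_n\big)^{-1}\big((\sd_n)^* f_i\big)$, as claimed. This reduces the asserted identity to
\[
    \tfrac{1}{n}\,\ldeg_{(W_n)_s}(\varphi^* D_0 \cdots \varphi^* D_d) = \ldeg_{W_s}(D_0 \cdots D_d).
\]

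For the second step, note that $\varphi$ is proper with $\varphi^{-1}(W_s) = (W_n)_s$, and that on generic fibres it is the base change $(W_n)_{\eta_n} = W_\eta \times_K K_n \to W_\eta$ along the degree‑$n$ extension $K_n/K$; here one uses that forming a $d$‑fold product commutes with base change, so the product over $K_n$ is the base change of the product over $K$ and the degree is $n$, not $n^d$. Thus $\varphi$ is generically finite with $\varphi_*[W_n] = n\,[W]$. Applying the projection formula for the local intersection product of \cite{publ1}, together with the compatibility of $\ldeg$ with proper pushforward — which holds because both special fibres live over the same algebraically closed residue field $k$ — yields
\[
    \ldeg_{(W_n)_s}(\varphi^* D_0 \cdots \varphi^* D_d)
    = \ldeg_{W_s}\big(\varphi_*(\varphi^* D_0 \cdots \varphi^* D_d)\big)
    = n\,\ldeg_{W_s}(D_0 \cdots D_d),
\]
and dividing by $n$ gives $\angles{f_0,\ldots,f_d}_{W,n} = \angles{f_0,\ldots,f_d}_{W,1}$. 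I expect the \emph{main obstacle} to be precisely this last step: it rests on having both a projection formula and a pushforward‑compatible degree map for the local Chow groups $\CH^\bullet_{W_s}$, so the real work is to confirm that the intersection theory of \cite{publ1} supplies these, whereas the passage from the affine functions $f_i$ to the pulled‑back divisors $\varphi^* D_i$ is pure functoriality of coordinate functions and reduction maps.
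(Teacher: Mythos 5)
Your proof follows essentially the same route as the paper: reduce by linearity to $f_i = f_{D_i}$, pull the divisors back along the model morphism $W_n \to W$ of \cref{metr-mod-morph}, identify $\varphi^* D_i$ with $(\phi_n)^{-1}\big((\sd_n)^* f_i\big)$ via functoriality of coordinate functions, and compare the two local degrees by a factor of $n=\deg\varphi$; the paper simply cites a ready-made base-change formula from \cite{publ1} where you invoke the projection formula. Note that your orientation of that formula, $\ldeg_{(W_n)_s}(\varphi^* D_0 \cdots \varphi^* D_d) = n\,\ldeg_{W_s}(D_0 \cdots D_d)$, is the one that actually cancels the $1/n$ in \cref{limit-schnittpaar-bw} (the paper's displayed equation places the factor $n$ on the other side, which appears to be a typo, as one checks on a chain of rational curves for $d=1$).
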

\begin{proof}
    By linearity we may assume that there are Cartier divisors 
    $D_0', \ldots D_d' \in \cadiv_{W_s}(W)$, 
    such that $f_i = f_{D'_i}^{\betr{\cdot}}$ for each $i$.
    By \cref{metr-mod-morph} there is a morphism $g: W_n \to W$ between the models.
    The pull-back divisors $g^* D_i$ satisfy
    $\phi^{\betr{\cdot}}_n(g^* D_i) = f_i$ by \cref{metr-koordfkt-additiv},
    thus $\angles{f_0, \ldots, f_d}_{W,n} = \ldeg_{W_n}(g^*(D_0) \cdot\cdots\cdot
    g^*(D_d))$.
    As $g: W_n \to W$ is generic flat and 
    $(W_n)_\eta = W_\eta \times_{S_\eta} (S_n)_\eta$ we may apply 
    \cite[schnitt-deg-bw]{publ1}:
    \[ 
        \ldeg_W(D_0 \cdot\cdots\cdot D_d) = n \ldeg_{W_n}(g^*(D_0) \cdot\cdots\cdot g^*(D_d)). 
    \]
    This proofs the proposition.
\end{proof}

\begin{par}
    We may also apply \cref{limit-schnittpaar-einfach} directly 
    to a model $X_n$ on $S_n$.
    The pairing defined in this way is denoted by
    $\multiangles_{W_n,1}$ and coincides with 
    \cref{limit-schnittpaar-bw} up to a constant factor:
\end{par}
\begin{prop}
    \label{limit-schnittpaar-bw-wohldef}
    For $f_0, \ldots, f_d \in \lid(\Gamma_n^d)$ 
    the equation
    \[ \angles{f_0, \ldots, f_d}_{W,n} = 
        n^d \angles{f_0, \ldots, f_d}_{W_n,1} 
    \]
    holds.
\end{prop}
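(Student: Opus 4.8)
The two pairings $\angles{\cdot}_{W,n}$ and $\angles{\cdot}_{W_n,1}$ are built from one and the same object, the local degree $\ldeg_{W_n}$ of a product of $d+1$ vertical Cartier divisors on $W_n$; they differ only in how the functions $f_0,\dots,f_d\in\lid(\Gamma_n^d)$ are translated back into divisors and by the explicit prefactor $1/n$ appearing in \cref{limit-schnittpaar-bw}. The plan is therefore to compare the two identifications $\phi_n$ and $\phi_{n,1}$ of $\cadiv_{(W_n)_s}(W_n)_\IR$ with $\lid(\Gamma_n^d)$, and then to read off the constant by multilinearity of $\ldeg_{W_n}$.

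First I would pin down the difference between $\phi_n$ and $\phi_{n,1}$. Both are the isomorphism of \cref{metr-div-fkt} for the \emph{same} scheme $W_n$, so each sends a component $C\in\RK(W_n)_0$ to its coordinate function $f_C$; the only discrepancy is the normalisation of the absolute value entering \cref{metr-koord-fkt}. For $\phi_n$ one uses the continuation of $\betr{\cdot}$ from $K$, while for $\phi_{n,1}$ one uses the absolute value normalised on $K_n$, so that $\betr{\pi_n}=1/b$ for a uniformiser $\pi_n$ of $R_n$. Because the residue field $k$ is algebraically closed, $K_n/K$ is totally ramified of degree $n$, hence the ramification index is $e=n$ and the unique continuation satisfies $\betr{\pi_n}=\betr{\pi}^{1/n}=b^{-1/n}$. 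Comparing the two normalisations gives $\betr{\cdot}_{\mathrm{native}}=\betr{\cdot}_{\mathrm{cont}}^{\,n}$.

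Next I would transport this to the coordinate functions. By \cref{metr-koord-lokal} the value $f_C(x)$ equals $-\log_b\betr{x^{\#}(f)}$ for a local equation $f$ of $C$, and this local equation is the same for both normalisations since $W_n$, the divisor $C$, and the point $x$ are fixed and only $\betr{\cdot}$ changes. Raising the absolute value to the $n$-th power therefore multiplies every coordinate function by $n$, i.e. $\phi_{n,1}(C)=n\,\phi_n(C)$ for each component, and by linearity $\phi_{n,1}=n\,\phi_n$. Inverting gives $(\phi_{n,1})^{-1}(f)=\tfrac1n(\phi_n)^{-1}(f)$ for every $f\in\lid(\Gamma_n^d)$.

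Finally I would combine these, using multilinearity of $\ldeg_{W_n}$ over the $d+1$ slots:
\begin{align*}
    \angles{f_0,\dots,f_d}_{W_n,1}
    &=\ldeg_{W_n}\Big(\prod_{i=0}^d(\phi_{n,1})^{-1}(f_i)\Big)
     =\frac{1}{n^{d+1}}\,\ldeg_{W_n}\Big(\prod_{i=0}^d(\phi_n)^{-1}(f_i)\Big)\\
    &=\frac{1}{n^{d+1}}\cdot n\,\angles{f_0,\dots,f_d}_{W,n}
     =\frac{1}{n^{d}}\,\angles{f_0,\dots,f_d}_{W,n},
\end{align*}
where the penultimate equality uses the prefactor $1/n$ from \cref{limit-schnittpaar-bw}; rearranging yields the asserted identity. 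The main obstacle is the middle step: one must be certain that changing only the normalisation of $\betr{\cdot}$ rescales each coordinate function by exactly the ramification index $n$ and by nothing else. This rests on checking that the local equation of $C$ used in \cref{metr-koord-lokal} is literally the same for the two metrics, together with the totally ramified computation $e=n$; both are elementary once the set-up is untangled, but they are the only places where geometric input enters.
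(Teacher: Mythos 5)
Your proof is correct and follows essentially the same route as the paper's: both identify the discrepancy between the two pairings as the change of normalisation of the absolute value on $K_n$, which rescales every coordinate function $f_C$ by the factor $n$ (so that $(\phi_{n,1})^{-1}(f)=\tfrac1n(\phi_n)^{-1}(f)$), and then conclude by multilinearity of $\ldeg_{W_n}$ over the $d+1$ slots combined with the prefactor $1/n$ in the definition of $\multiangles_{W,n}$. Your explicit justification that $K_n/K$ is totally ramified of degree $n$ (because the residue field is algebraically closed), so that the continued valuation satisfies $\betr{\pi_n}=b^{-1/n}$, is a detail the paper leaves implicit but is exactly the geometric input needed.
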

\begin{proof}
    Let $\pi_n$ denote a uniformizer of the discrete valuation ring $R_n$.
    To define $\multiangles_{W_n,1}$ one has to use
    the valuation $\betr{\cdot}_n$ on $R_n$ which is normalised
    by $\betr{\pi_n} = 1/e$.
    Therefore we have $\betr{\cdot}_n = (\betr{\cdot})^n$
    and thus for each Cartier divisor $C \in \cadiv_{(W_n)_s}(W_n)_\IR$:
    \[ f_C^{\betr{\cdot}_n} = nf_C^{\betr{\cdot}}. \]
    We have 
    \[ (\phi_n^{\betr{\cdot}})^{-1}(f_i) = D_i 
        = (\phi_1^{\betr{\cdot}_n})^{-1}(nf_i). \]
    This implies
    \begin{align*}
       \angles{f_0, \ldots, f_d}_{W,n} & = \frac{1}{n}\ldeg_{W_n}(D_0, \ldots, D_d)
       = \frac{1}{n} \angles{nf_0, \ldots, nf_d}_{W_{n, 1}} \\
       & = n^d \angles{f_0, \ldots, f_d}_{W_{n, 1}}. 
    \end{align*}
\end{proof}
\begin{par}
    For our further calculation we need a more explicite description of the Chow ring
    $\CH^1_{W_s}(W)$. We use the combinatorial Chow ring of \cite[Def 4.12]{publ1} defined as:
\end{par}
\begin{defn}
    \label{schnitt-chw}
    \begin{par}
        Let $\Gamma$ be a finite graph without multiple simplices 
        and $\Gamma^d$ be the $d$-fold product.
        We denote with $Z(\Gamma^d)$ the polynomial ring
        $Z(\Gamma^d):= \IZ[C \mid C \in (\Gamma^d)_0]$ 
        generated by the 0-simplices. 
        It is supplied with the usual grading, which
        gives all generators $C \in (\Gamma^d)_0$ the degree 1.
    \end{par}
    \begin{par}
        We define a graded ideal $\mathrm{Rat}(\Gamma^d)$ on $Z(\Gamma^d)$ 
        generated by the polynomials
        \begin{align}
            \label{schnitt-chw1}
            C_1 \cdot \cdots \cdot C_k & \textrm{ for } \{C_1, \ldots, C_k\} \not \in
            (\Gamma^d)_S, \\
            \label{schnitt-chw2}
            \Big(\sum_{C' \in (\Gamma^d)_0} C'\Big) C_1 &, \\
            \label{schnitt-chw3}
            \sum_{\substack{C' \in (\Gamma^d)_0\\ \pr_i(C')=\pr_i(C_2)}} C_1 C_2 C'
            & \textrm{ for } i \in \{1, \ldots, d\} \textrm{ with } 
            \pr_i(C_1) \neq pr_i(C_2).
        \end{align}
        We call $\mathrm{Rat}(\Gamma^d)$ the ideal of cycles rational equivalent to zero.
    \end{par}
    \begin{par}
        The graded ring
        \[ \KC(\Gamma^d) := Z(\Gamma^d) / \mathrm{Rat}(\Gamma^d) \]
        is called \emph{combinatorial Chow ring}.
    \end{par}
\end{defn}
\begin{par}
    The combinatorial Chow ring has the following properties:
\end{par}
\begin{satz}
    \label{schnitt-comb-props}
    \begin{enumerate}
        \item
            There exists a morphism of $\IZ$-modules
            \[
                \ldeg_{\Gamma^d}: \KC(\Gamma^d) \to \IZ
            \]
            such that for each set $\{C_0, \ldots, C_d\}$ of $d+1$ distinct vertices of
            $\Gamma^d$ the degree $\ldeg_{\Gamma^d}(C_0 \cdot \ldots \cdot C_d) = 1$.
        \item
            The local degree map can be calculated locally:
            By \cref{schnitt-graph-zerleg} we associate 
            to each $\gamma=(\gamma_1, \ldots, \gamma_d)$ with 
            $\gamma_1, \ldots, \gamma_d \in \Gamma(X)^1$ 
            an embedding $i_\gamma: I^d \to \Gamma^d$. 
            This gives a covering of $\Gamma^d$ and the local degree
            satisfies
            \[
                \ldeg_{\Gamma^d}(\alpha) = \sum_{\gamma \in (\Gamma_1)^d}
                \ldeg_{I^d}(i_\gamma^* \alpha),
            \]
            where $i_\gamma^*: \KC(\Gamma^d) \to \KC(I^d)$ denotes the morphism
            given by functoriality of $\KC$.
    \end{enumerate}
\end{satz}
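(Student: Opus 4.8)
The plan is to define $\ldeg_{\Gamma^d}$ intrinsically and then read off both assertions, rather than to take the formula in (2) as a definition. Concretely, I would transport the geometric local degree map $\ldeg_{W_s}\colon \CH^{d+1}_{W_s}(W)\to\IZ$ of \cite[Def 4.6]{publ1} along the identification $\CH^{*}_{W_s}(W)\cong\KC(\Gamma^d)$ coming from $\RK(W)=\Gamma^d$ in \cite{publ1}; this produces the $\IZ$-linear map $\ldeg_{\Gamma^d}$ and reduces assertion (1) to a geometric transversality statement. Indeed, if $\{C_0,\dots,C_d\}$ does not span a simplex of $\Gamma^d$ then the corresponding components have empty common intersection and the product $C_0\cdots C_d$ already vanishes by \cref{schnitt-chw1}. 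If they do span a $d$-simplex, then (since $\Gamma$ has no multiple simplices this simplex is unique) the $d+1$ distinct components of the strict normal crossings fibre $W_s$ meet properly and with multiplicity $1$ in a single $k$-rational point, by the regular strict semi-stability of $W$ (cf. \cite[Prop 4.8]{publ1}); hence $\ldeg_{\Gamma^d}(C_0\cdots C_d)=1$.

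For assertion (2) I would first make the local comparison map precise. Each edge tuple $\gamma\in(\Gamma_1)^d$ gives the embedding $i_\gamma\colon I^d\to\Gamma^d$ of \cref{schnitt-graph-zerleg}, and on the polynomial level the assignment sending a vertex $C\in(\Gamma^d)_0$ to the unique vertex of $I^d$ lying over it, and to $0$ if $C$ is not a vertex of the $\gamma$-cube, defines a graded ring homomorphism $Z(\Gamma^d)\to Z(I^d)$. The essential point is that this homomorphism carries $\mathrm{Rat}(\Gamma^d)$ into $\mathrm{Rat}(I^d)$, so that it descends to $i_\gamma^{*}\colon\KC(\Gamma^d)\to\KC(I^d)$: for \cref{schnitt-chw1} this is immediate, for \cref{schnitt-chw2} the global sum $\sum_{C'\in(\Gamma^d)_0}C'$ restricts to the sum over the vertices of the cube because every other vertex is sent to $0$, and for \cref{schnitt-chw3} one must check that the fibre sum over $\{C'\mid \pr_i(C')=\pr_i(C_2)\}$ restricts exactly to the corresponding fibre sum inside the cube. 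This compatibility with \cref{schnitt-chw3} is the main technical obstacle, since it requires tracking precisely which vertices of the $\gamma$-cube survive under the projection $\pr_i$. On the local model itself I would use the standard scheme $L^d$ with $L=\Spec R[x_0,x_1]/(x_0x_1-\pi)$, whose reduction set is $I^d$, to obtain $\ldeg_{I^d}$ as the transported geometric degree; its well-definedness on $\KC(I^d)$ is the local case just discussed (e.g.\ \cref{schnitt-chw2} expresses that $\sum_{C'}C'=\Div(\pi)$ restricts trivially to any complete curve in $(L^d)_s$).

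With $i_\gamma^{*}$ and $\ldeg_{I^d}$ in hand, the locality formula (2) becomes a consequence of the localization argument \cite[Prop 4.23]{publ1}: the global intersection number $\ldeg_{W_s}$ decomposes as a sum of local contributions over an open cover of $W$ adapted to the strata, where each chart is isomorphic to an open of the standard scheme $L^d$ and the restriction of a class $\alpha$ to the $\gamma$-chart is precisely $i_\gamma^{*}\alpha$. Summing the local degrees over the cubes then yields
\[
    \ldeg_{\Gamma^d}(\alpha)=\sum_{\gamma\in(\Gamma_1)^d}\ldeg_{I^d}(i_\gamma^{*}\alpha),
\]
which is exactly (2). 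I expect the hard part to be twofold and essentially the same difficulty viewed combinatorially and geometrically: verifying that $i_\gamma^{*}$ respects \cref{schnitt-chw3} on the combinatorial side, and matching $i_\gamma^{*}$ with the geometric restriction to the $\gamma$-chart so that the localization of \cite[Prop 4.23]{publ1} reassembles the cubes without over- or under-counting common strata. Once these are settled, (1) and (2) follow as above.
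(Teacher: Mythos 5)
The paper itself offers no proof here: the entire argument is the citation \cite[4.4]{publ1}, and the surrounding text (the ``simplicial calculus'' of \cite[4.3]{publ1}, the remark that the coefficients $\ldeg_{I^d}(\prod_i F_{v_i})$ are independent of $X$) indicates that the degree map on $\KC(\Gamma^d)$ is constructed and the locality formula proved there combinatorially, with the geometric comparison morphism $\varphi$ of \cref{schnitt-grad-vergleich} coming only afterwards. Your route inverts this order --- define $\ldeg_{\Gamma^d}$ as $\ldeg_{W_s}\circ\varphi$ and deduce both assertions from the geometry of $W$ --- which is a legitimate alternative, but as written it has three gaps. First, \cref{schnitt-grad-vergleich} provides $\varphi$ only as a ring homomorphism that is an isomorphism in degree $1$, and its degree-compatibility statement already presupposes the combinatorial degree map; so you cannot ``transport along the identification $\CH^{*}_{W_s}(W)\cong\KC(\Gamma^d)$''. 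You may only \emph{define} $\ldeg_{\Gamma^d}:=\ldeg_{W_s}\circ\varphi$, and then you must check that this composite depends only on the abstract graph $\Gamma$ and not on the auxiliary curve $X$ realizing it --- the theorem is a statement about an arbitrary finite graph. (Independence does follow once formula (2) is proved for one fixed $X$, but that logical order needs to be made explicit.) Second, the compatibility of your vertex-killing map with relation \cref{schnitt-chw3}, which you correctly identify as the crux of constructing $i_\gamma^{*}$, is asserted but not verified; it does hold --- the fibre $\{C'\mid \pr_i(C')=\pr_i(C_2)\}$ meets the $\gamma$-cube in exactly the $2^{d-1}$ vertices forming the corresponding fibre in $I^d$, and every other summand is sent to $0$ --- but without this check the locality formula is unproved. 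Third, in assertion (1) the claim that $d+1$ distinct components spanning a $d$-simplex meet in a \emph{single} reduced $k$-rational point is precisely where the standing hypothesis that the reduction set has no multiple simplices enters; without it the zero-dimensional intersection may consist of several points and the degree exceeds $1$, so this hypothesis should be invoked explicitly. With these points filled in, your argument goes through and is, if anything, more self-contained than the paper's bare citation.
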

\begin{proof}
    \cite[4.4]{publ1}
\end{proof}
\begin{par}
    In the local situation $\Gamma=I$ we can describe the situation more precisely:
\end{par}
\begin{par}
    Let the vertices of $I = \Delta[1]$ be denoted by $C_0$ and $C_1$ 
    with the ordering $C_0 < C_1$.
    Then the vertices of $I^d$ can be described using vectors $v \in \IF_2^d$:
    For each vector $v=(v_1, \ldots, v_d)$ let $C_v \in (I^d)_0$ denote the vertex with
    $\pr_i(C_v) = C_{v_i}$.
    The set $\{C_v \mid v \in \IF_2^d\}$ is a generating set for $\KC(I^d)_\IQ$.
    Using fourier transforms we get yet another generating set:
    \[
        F_v := \sum_{w \in \IF_2^d}(-1)^{\angles{v,w}} C_w.
    \]
    It turns out that the generating set $\{F_v \mid v \in \IF_2^d\}$ of $\KC(I^d)$
    is appropriate for the further calculations.
    We first note the following isomorphism:
\end{par}

\begin{satz}
    \label{schnitt-lok-psi}
    Consider $\KC(I^d)$ with the generating sets $\{C_v \mid v \in \IF_2^d\}$ 
    and $\{F_v \mid v \in \IF_2^d\}$ as defined above.
    There is an isomorphism of graded rings
    \[ \psi: \KC(I^d) \xrightarrow{\sim} \KC(I^d), \]
    which is uniquely determined by
    $\psi(C_v) = C_{v+(1,\ldots, 1)}$.
    The equation
    \[ \psi(F_v) = (-1)^{\angles{v,(1,\ldots, 1)}} F_v. \]
    holds and $\psi$ is compatible with the local degree $\ldeg_{I^d}$.
\end{satz}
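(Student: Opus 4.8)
The plan is to realise $\psi$ as the ring automorphism induced by the central flip of the cube, to check that this flip descends to the combinatorial Chow ring, and then to read off both the eigenvalue formula for the $F_v$ and the compatibility with $\ldeg_{I^d}$. First I would observe that $v \mapsto v + (1,\ldots,1)$ is the $d$\nobreakdash-fold power of the unique non-trivial simplicial automorphism $\sigma$ of $I=\Delta[1]$, namely the one interchanging the vertices $C_0 \leftrightarrow C_1$. Writing $\tau := \sigma \times \cdots \times \sigma$ for the resulting automorphism of $I^d$, each vertex $C_v$ is carried to $C_{v+(1,\ldots,1)}$. Since $\{C_v \mid v \in \IF_2^d\}$ is a generating set of the polynomial ring $Z(I^d)$, the assignment $\psi(C_v) = C_{v+(1,\ldots,1)}$ extends uniquely to a graded ring endomorphism of $Z(I^d)$; because $v \mapsto v+(1,\ldots,1)$ is an involution of $\IF_2^d$, this endomorphism satisfies $\psi^2 = \mathrm{id}$ and is therefore an automorphism.

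Next I would check that $\psi$ preserves the ideal $\mathrm{Rat}(I^d)$, so that it descends to $\KC(I^d)$; by the involution property it then descends to an automorphism. It suffices to send each of the three generating families \cref{schnitt-chw1}, \cref{schnitt-chw2}, \cref{schnitt-chw3} into $\mathrm{Rat}(I^d)$. For \cref{schnitt-chw1} this is precisely the statement that $\tau$ is a simplicial automorphism: a set $\{C_{v_1}, \ldots, C_{v_k}\}$ lies in $(I^d)_S$ if and only if its image $\{C_{v_1+(1,\ldots,1)}, \ldots, C_{v_k+(1,\ldots,1)}\}$ does. For \cref{schnitt-chw2} the total sum $\sum_{C'} C'$ is fixed by $\psi$ (it merely permutes the summands), so the generator is carried to another generator of the same shape. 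For \cref{schnitt-chw3} I would use that $\tau$ is a product over the factors, whence $\pr_i \circ \tau = \sigma \circ \pr_i$; since $\sigma$ is a bijection of $I_0$, the conditions $\pr_i(C_1)\neq\pr_i(C_2)$ and $\pr_i(C')=\pr_i(C_2)$ are transported to the same conditions after replacing $C_1,C_2,C'$ by their images, so the relation indexed by $(C_1,C_2,i)$ is mapped to the relation indexed by $(C_{1+(1,\ldots,1)},C_{2+(1,\ldots,1)},i)$. This third family is the fiddliest bookkeeping, but it presents no real obstacle. (Equivalently, one may simply invoke functoriality of $\KC$ applied to $\tau$, identifying $\psi$ with $\tau^*$, so that well-definedness is automatic.)

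The eigenvalue formula is then a one-line Fourier computation: re-indexing by $w \mapsto w+(1,\ldots,1)$ and using bilinearity of $\angles{\cdot,\cdot}$ over $\IF_2$ gives
\[
  \psi(F_v) = \sum_{w \in \IF_2^d} (-1)^{\angles{v,w}} C_{w+(1,\ldots,1)}
            = \sum_{w \in \IF_2^d} (-1)^{\angles{v,w+(1,\ldots,1)}} C_{w}
            = (-1)^{\angles{v,(1,\ldots,1)}} F_v.
\]
Finally, for compatibility with the local degree I would argue that $\ldeg_{I^d}$ is pinned down by its normalisation in \cref{schnitt-comb-props}, the value $1$ on each product of $d+1$ distinct vertices. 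As $\psi$ is a bijection on vertices preserving distinctness and is graded, it permutes exactly these top\nobreakdash-degree monomials among themselves and sends lower-degree terms to lower-degree terms; hence $\ldeg_{I^d}\circ\psi$ and $\ldeg_{I^d}$ agree on this spanning set, and therefore everywhere. I expect this last step to be the only point requiring genuine care, since it hinges on the precise normalisation of $\ldeg_{I^d}$; once one knows that $\psi$ merely reshuffles the spanning products and introduces nothing extra in the relevant degree, the equality $\ldeg_{I^d}\circ\psi = \ldeg_{I^d}$ is forced.
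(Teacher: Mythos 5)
The paper does not actually prove this theorem in the text — it is quoted verbatim from \cite[Prop 4.30]{publ1} — so there is no in-paper argument to compare yours against. Taken on its own terms, your proposal follows the natural route (define $\psi$ on the polynomial ring $Z(I^d)$, check the three generating families of $\mathrm{Rat}(I^d)$ are preserved, re-index the Fourier sum, deduce degree compatibility from the normalisation of $\ldeg_{I^d}$), and the computation $\psi(F_v)=(-1)^{\angles{v,(1,\ldots,1)}}F_v$ together with your treatment of \cref{schnitt-chw2} and \cref{schnitt-chw3} is correct.

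Two points need repair. First, $\Delta[1]$ has \emph{no} non-trivial automorphism in the category of simplicial sets: morphisms in $\Delta$ are order-preserving, so the swap $C_0\leftrightarrow C_1$ is not a simplicial map, your ``unique non-trivial simplicial automorphism $\sigma$ of $I$'' does not exist, and the parenthetical shortcut of invoking functoriality of $\KC$ applied to $\tau$ is not available. What actually saves the argument for \cref{schnitt-chw1} is the combinatorial description in \cref{sk-simpl-perm}: a set of vertices $\{C_{v_1},\ldots,C_{v_k}\}$ lies in $(I^d)_S$ precisely when $\{v_1,\ldots,v_k\}$ is a chain in the product order on $\IF_2^d$, and the complement map $v\mapsto v+(1,\ldots,1)$ is order-\emph{reversing}, hence carries chains bijectively to chains. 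You should argue this directly rather than appeal to a non-existent automorphism. Second, the closing step ``agree on this spanning set, and therefore everywhere'' presupposes that the products of $d+1$ distinct vertices span the degree-$(d+1)$ component of $\KC(I^d)$, equivalently that the normalisation in \cref{schnitt-comb-props} determines $\ldeg_{I^d}$ uniquely. This is true — relations \cref{schnitt-chw2} and \cref{schnitt-chw3} let one rewrite any monomial containing a repeated vertex as a combination of monomials with fewer repetitions — but it is exactly the point you identify as delicate and then pass over; it needs a sentence of justification before the equality $\ldeg_{I^d}\circ\psi=\ldeg_{I^d}$ is ``forced.''
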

\begin{proof}
    \cite[Prop 4.30]{publ1}
\end{proof}
\begin{par}
    The combinatorial Chow ring can be compared with the Chow ring of $W$:
\end{par}
\begin{thm}[({\cite[Prop 4.14, Prop 4.23]{publ1}})]
    \label{schnitt-grad-vergleich}
    Let $d \in \IN$, $X$ be a proper regular strict semi-stable curve over $S$ with 
    a fixed ordering of $X_s^{(0)}$. Let $X$ be the Gross-Schoen desingularization of
    $X^d$ by \cref{desi-prodkomp}.
    Then there exists a morphism or graded rings
    \[
        \varphi: \KC(\RK(W)) \to \CH_{W_s}(W)_\IQ
    \]
    such that
    $\varphi$ is an isomorphism in degree $1$ and the equation
    \[
        \ldeg_W(\varphi(\alpha)) = \ldeg_{\RK(W)}(\alpha)
    \]
    holds for each $\alpha \in \KC(\RK(W))^{d+1}$.
\end{thm}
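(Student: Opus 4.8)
The plan is to construct $\varphi$ on the polynomial generators, extend it multiplicatively, verify that it annihilates the defining relations of $\mathrm{Rat}(\RK(W))$, and then establish the two assertions — isomorphy in degree $1$ and compatibility of the degree maps — separately, the latter by reduction to the standard local chart. First I would set $\varphi(C) := [C] \in \CH^1_{W_s}(W)_\IQ$ for every vertex $C \in \RK(W)_0$, using that $W$ is regular strict semi-stable so that each irreducible component of $W_s$ is an effective Cartier divisor and defines a class. Since $Z(\RK(W))$ is the free polynomial ring on these vertices and $\CH_{W_s}(W)_\IQ$ carries the intersection product, this prescription determines a unique morphism of graded rings $\tilde\varphi\colon Z(\RK(W)) \to \CH_{W_s}(W)_\IQ$, and the real content is to show that $\tilde\varphi$ factors through $\mathrm{Rat}(\RK(W))$.

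So I must check that $\tilde\varphi$ maps each of the three families \cref{schnitt-chw1}, \cref{schnitt-chw2}, \cref{schnitt-chw3} to zero. For \cref{schnitt-chw1} the image of a product indexed by a non-simplex vanishes because, by $\RK(W) = \Gamma(X)^d$, incidence in the reduction set records exactly which components of $W_s$ meet; a set that is not a simplex corresponds to components with empty common intersection, hence to a zero cycle. For \cref{schnitt-chw2} I would use that $\sum_{C'} C' = W_s = \Div(\pi)$ is a principal divisor, so $\Oo_W(W_s) \cong \Oo_W$ and its class acts as $c_1(\Oo_W(W_s)) = 0$ on any cycle, killing $(\sum_{C'}C')\,C_1$.

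The subtle family is \cref{schnitt-chw3}, where $\sum_{\pr_i(C')=\pr_i(C_2)} C' = \pr_i^*\big(\pr_i(C_2)\big)$ is the pull-back of a single component of $X_s$ along $\pr_i\colon W \to X$, and I must show $[C_1]\cdot[C_2]\cdot\pr_i^*\big(\pr_i(C_2)\big) = 0$ whenever $\pr_i(C_1) \neq \pr_i(C_2)$. Here I would argue locally: in the standard chart $X = \Spec R[x_0,x_1]/(x_0x_1-\pi)$ the component $\pr_i(C_2)$ is cut out by one of the coordinates, its pull-back is the Cartier divisor of the pulled-back coordinate, and the hypothesis $\pr_i(C_1) \neq \pr_i(C_2)$ forces $C_1\cap C_2$ to lie over the node $x_0 = x_1 = 0$, so that the refined intersection with $\pr_i^*\big(\pr_i(C_2)\big)$ degenerates. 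Granting these three vanishings, $\tilde\varphi$ descends to the desired $\varphi\colon \KC(\RK(W)) \to \CH_{W_s}(W)_\IQ$.

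For the degree-$1$ statement I would observe that all generators of $\mathrm{Rat}(\RK(W))$ have degree at least $2$, so $\KC(\RK(W))_1$ is free on the vertices, while $\CH^1_{W_s}(W)_\IQ$ is free on the components of $W_s$; the assignment $C \mapsto [C]$ then matches these two bases and is a bijection. For the identity $\ldeg_W(\varphi(\alpha)) = \ldeg_{\RK(W)}(\alpha)$ in top degree $d+1$ I would invoke the localisation of the combinatorial degree from \cref{schnitt-comb-props}, writing both sides as a sum over $\gamma \in (\Gamma_1)^d$ of contributions supported on the standard cube $I^d$; this reduces the claim to the model $\big(\Spec R[x_0,x_1]/(x_0x_1-\pi)\big)^d$ of \cref{desi-prodkomp}, where, using the explicit $F_v$-description of \cref{schnitt-lok-psi}, the numbers $\ldeg_W(C_{v_0}\cdots C_{v_d})$ can be computed directly and matched with the normalisation $\ldeg_{\RK(W)}(C_0\cdots C_d)=1$ for distinct vertices of a simplex. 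I expect the main obstacle to be precisely the pair formed by relation \cref{schnitt-chw3} and this final local computation: both demand a genuinely explicit hold on the iterated blow-ups of \cref{desi-prodkomp} and on how the projections $\pr_i$ act on the components of $W_s$, rather than purely formal manipulation inside the combinatorial ring.
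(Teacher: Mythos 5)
First, a point of comparison: this paper does not prove the statement at all — it is imported verbatim from the companion paper as \cite[Prop 4.14, Prop 4.23]{publ1}, so there is no in-text proof to measure your attempt against. Your outline is nevertheless the natural one and, judging from how the result is used here (the reduction to $\left(\Spec{R[x_0,x_1]/(x_0x_1-\pi)}\right)^d$ advertised in the introduction), it follows the same architecture as the cited source: define $\varphi$ on vertices, kill the three families of relations, and localise the degree identity to the standard cube. The treatment of \cref{schnitt-chw1} and \cref{schnitt-chw2} and of the degree-$1$ isomorphism (relations generated in degree $\geq 2$ on one side, $\CH^1_{W_s}(W)_\IQ$ free on the components of $W_s$ on the other) is correct as stated.

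However, the two steps that carry the mathematical weight are plans rather than arguments. For relation \cref{schnitt-chw3}, ``the refined intersection degenerates'' is not a proof. The actual mechanism is: writing $E=\pr_i(C_2)$, the sum $\sum_{\pr_i(C')=E}C'$ equals $\pr_i^*E$ with all multiplicities one (because $W_s$ is reduced and $\pr_i^*\Div(\pi)=\Div(\pi)$), so the relation is $C_1\cdot C_2\cdot c_1\bigl(\pr_i^*\Oo_X(E)\bigr)$; since $\pr_i(C_1)\neq E$, the cycle $C_1\cdot C_2$ maps under $\pr_i$ into the zero-dimensional scheme $\pr_i(C_1)\cap E$, so $\pr_i^*\Oo_X(E)$ restricted to its support is pulled back from finitely many points, hence trivial, and its first Chern class annihilates $C_1\cdot C_2$ in the Chow group with supports. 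For the top-degree identity, \cref{schnitt-comb-props}(2) localises only the \emph{combinatorial} degree $\ldeg_{\RK(W)}$; what you must additionally establish is the matching geometric statement, namely that $\ldeg_W$ of a product of $d+1$ vertical divisors decomposes as a sum of local contributions indexed by $\gamma\in(\Gamma_1)^d$, each computed on a scheme \'etale-locally isomorphic to the standard model. That decomposition is precisely the substance of \cite[Def 4.22, Prop 4.23]{publ1}; it does not follow formally from anything in your outline, and without it the reduction to $I^d$ and the final comparison of normalisations is unjustified. You correctly flag both points as the obstacles, but as written they remain genuine gaps.
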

\begin{par}
    We may now formulate the intersection pairing in analytical terms
    using the $n$-th lattice approximation of the functions 
    $f_0, \ldots, f_d$.
\end{par}
\begin{prop}
    \label{limit-lin-int}
    Let $n \in \IN$ and
    $f_0, \ldots, f_d \in \lid{(\Gamma_n)^d}$. Then
    \begin{equation}
        \label{limit-lin-int-eq}
        \angles{f_0, \ldots, f_d}_{W,n} = 
        n^{2d} \sum_{v_0, \ldots, v_d \in \IF_2^d}
        \ldeg_{I^d}(\prod_{i=0}^d F_{v_i}) 
        \int_{\Gamma^d} 
        \prod_{i=0}^d \tilde\Delta^v_n(f_i)
    \end{equation}
    holds.
\end{prop}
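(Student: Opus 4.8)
The plan is to compute the pairing directly from \cref{limit-schnittpaar-bw}, transport the intersection product into the combinatorial Chow ring, localise it to the standard cubes, and then recognise the Fourier expansion of the divisors as the lattice approximations $\tilde\Delta_n^v$. Each of these moves contributes an explicit power of $n$, and the crux of the computation is to see these powers combine to $n^{2d}$.

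First I would unwind $(\phi_n^{\betr{\cdot}})^{-1}$. By \cref{metr-div-fkt} a vertical divisor is recovered from its coordinate function; however, since the special fibre of $W_n$ is $\operatorname{div}(\pi_n)$ and the continued valuation satisfies $\betr{\pi_n}=b^{-1/n}$, the functions $f_C^{\betr{\cdot}}$ sum to $1/n$ on $\RK(W_n)$ (equivalently $f_C^{\betr{\cdot}}=\tfrac1n f_C^{\betr{\cdot}_n}$, cf. \cref{limit-schnittpaar-bw-wohldef}). Consequently $(\phi_n^{\betr{\cdot}})^{-1}(f_i)=n\sum_{C\in\RK(W_n)_0}f_i(C)[C]$, so the product of the $d+1$ divisors carries an overall factor $n^{d+1}$. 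Writing $\alpha_i:=\sum_C f_i(C)\cdot C\in\KC(\RK(W_n))$ and invoking \cref{schnitt-grad-vergleich} I replace the geometric local degree by the combinatorial one:
\[
\ldeg_{(W_n)_s}\Big(\prod_i(\phi_n^{\betr{\cdot}})^{-1}(f_i)\Big)=n^{d+1}\,\ldeg_{\RK(W_n)}\Big(\prod_i\alpha_i\Big).
\]

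Next I would localise. By \cref{schnitt-comb-props}, applied to the graph $\Gamma_n$, the combinatorial degree decomposes as $\ldeg_{\Gamma_n^d}(\prod_i\alpha_i)=\sum_{\gamma}\ldeg_{I^d}(\prod_i i_\gamma^*\alpha_i)$, the sum running over the cubes $\gamma\in((\Gamma_n)_1)^d$. By functoriality $i_\gamma^*$ sends the $2^d$ vertices of $\gamma$ to the generators $C_w$ and kills every other vertex, so $i_\gamma^*\alpha_i=\sum_{w\in\IF_2^d}f_i(\mathrm{corner}_w)\,C_w$. Here I pass to the Fourier basis $F_v=\sum_w(-1)^{\angles{v,w}}C_w$ of \cref{schnitt-lok-psi}: inverting gives $C_w=2^{-d}\sum_v(-1)^{\angles{v,w}}F_v$, whence
\[
i_\gamma^*\alpha_i=\sum_{v\in\IF_2^d}\Big(2^{-d}\sum_{w}(-1)^{\angles{v,w}}f_i(\mathrm{corner}_w)\Big)F_v=\sum_{v}\tilde\Delta_n^v f_i(x_\gamma)\,F_v
\]
for any $x_\gamma$ in the open cube, the bracket being exactly the discrete Fourier transform defining $\tilde\Delta_n^v$. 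The one point needing care is to align the indexing of the vertices $C_w$ of $I^d$ with the corners $\tilde x^{(n)}+(1/2n)^{w}$ used in $\tilde\Delta_n^v$, i.e.\ to check that the two orientation conventions agree; this is where I expect the main bookkeeping effort.

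Finally I would expand the product and use multilinearity of $\ldeg_{I^d}$ together with the fact that $\ldeg_{I^d}(\prod_iF_{v_i})$ is a constant independent of $\gamma$:
\[
\ldeg_{\Gamma_n^d}\Big(\prod_i\alpha_i\Big)=\sum_{v_0,\ldots,v_d}\ldeg_{I^d}\Big(\prod_iF_{v_i}\Big)\sum_{\gamma}\prod_i\tilde\Delta_n^{v_i}f_i(x_\gamma).
\]
Since each $\tilde\Delta_n^{v_i}f_i$ is constant on every cube of $\Gamma_n^d$, and such a cube has volume $n^{-d}$ for the integral on $\Gamma^d$, the inner sum equals $n^d\int_{\Gamma^d}\prod_i\tilde\Delta_n^{v_i}f_i$. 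Collecting the three powers of $n$ — the factor $n^{d+1}$ from the coordinate functions, the factor $n^d$ from the passage from sum to integral, and the factor $1/n$ from \cref{limit-schnittpaar-bw} — yields $n^{d+1}\cdot n^{d}\cdot n^{-1}=n^{2d}$, which is precisely \cref{limit-lin-int-eq}. The principal obstacle is thus not any single step but the disciplined tracking of these normalisation factors — especially the easily overlooked $1/n$ in the coordinate functions on the ramified model — and the matching of the two Fourier conventions in the middle step.
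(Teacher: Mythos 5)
Your proposal is in substance the paper's own argument — transport to the combinatorial Chow ring via \cref{schnitt-grad-vergleich}, localisation to the standard cubes via \cref{schnitt-comb-props}, passage to the Fourier basis $F_v$, and identification of the coefficients with $\tilde\Delta_n^v(f_i)$ — only organised differently: the paper first proves the case $n=1$ and then reduces $n>1$ to it using \cref{limit-schnittpaar-bw-wohldef} and \cref{schnitt-delta-kommut}, so its $n^{2d}$ arises as $n^d\cdot n^d$, whereas you work directly on $W_n$ and obtain it as $n^{d+1}\cdot n^d\cdot n^{-1}$; both bookkeepings are correct, and your computation of the factor $n$ hidden in $(\phi_n^{\betr{\cdot}})^{-1}$ is valid. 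The one point you flag but leave open deserves a sharper resolution than ``check that the two orientation conventions agree'': they do \emph{not} agree. With the paper's conventions the generator $C_w$ sits at the corner $\tilde x^{(n)}+\tfrac{1}{2n}(-1)^{w+(1,\ldots,1)}$, so your Fourier coefficient of $F_v$ is $(-1)^{\angles{v,(1,\ldots,1)}}\tilde\Delta_n^v f_i$ rather than $\tilde\Delta_n^v f_i$, and the product over $i=0,\ldots,d$ acquires the sign $(-1)^{\sum_i\angles{v_i,(1,\ldots,1)}}$. This discrepancy is harmless only because of \cref{schnitt-lok-psi}: the involution $\psi$ preserves $\ldeg_{I^d}$ and satisfies $\psi(F_v)=(-1)^{\angles{v,(1,\ldots,1)}}F_v$, so $\ldeg_{I^d}(\prod_i F_{v_i})$ vanishes whenever that sign is $-1$. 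You should cite this explicitly; without it the middle step of your computation is off by a sign on a priori nonzero terms.
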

\begin{proof}
    \mbox{}
    \paragraph{\textbf{Case 1: $n=1$.}}
    \begin{par}
        Denote by $\varphi_W: \KC(\Gamma(X)^d) \to \CH_{W_s}(W)$ the 
        canonical morphism of the combinatorial Chow ring to 
        $\CH_{W_s}(W)$ of \cref{schnitt-grad-vergleich}.
        By linearity we may assume $f_i = f_{D_i}^{\betr{\cdot}}$ 
        for Cartier divisors $D'_0, \ldots D'_d \in \cadiv_{W_s}(W)$.
        These divisors are in the image of $\varphi_W$ and we choose
        pre-images $D_0, \ldots D_d \in \KC(\Gamma(X)^d)$ such that
        $D'_i = \varphi_W(D_i)$.
        By \cref{schnitt-grad-vergleich} we may calculate the local degree
        in $\KC(\Gamma(X)^d)$ and get together with \cref{schnitt-comb-props}(2)
        \[ \ldeg_{W}(\varphi_W(D_0 \cdot\cdots\cdot D_d)) = 
            \sum_{\gamma \in \Gamma_1(X)^d} \ldeg_{I^d} \circ i_\gamma^*(D_0 \cdot\cdots\cdot D_d).
        \]
        Therefore both sides of \cref{limit-lin-int-eq} are additive on cubes and 
        it suffices to deal with the case $\RK(X)=I$.
    \end{par}
    \begin{par}
        Let $\Gamma(X)=I$. The centre of the standard cube 
        $|\Gamma(X)^d|=|I^d|=[0,1]^d$ 
        is denoted by $x_M:=1/2 (1,\ldots, 1)$.
        We describe the vertices of $I^d$ as usual by 
        $\{ C_v \mid v \in \IF_2^d\}$ 
        and study for each function $f \in \sms(I^d)$
        the associated divisor by \cref{metr-div-fkt}:
        \[ D_f := \phi_1^{-1}(f) = \sum_{v \in \IF_2^d} f_i(C_v) C_v. \]
        The coordinates of $C_v$ as point in $I^d$ are given by
        $x_M - 1/2(-1)^v$, 
        where $(-1)^v = ((-1)^{v_1}, \ldots, (-1)^{v_d})$ for $v = (v_1, \ldots, v_d)$.
        We have
        $x_M - 1/2(-1)^v = x_M + (1/2)(-1)^{v+(1,\ldots,1)}$
        and thus
        $f(C_v) = f_{x_M}^{1/2}(v+(1, \ldots, 1))$. 
        Using the morphism
        $\psi: \KC(I^d) \to \KC(I^d), C_v \mapsto C_{v+(1, \ldots, 1)}$ 
        from \cref{schnitt-lok-psi}
        we calculate:
        \begin{align*}
            D_f & = \sum_{v \in \IF_2^d} f_{x_M}^{1/2}(v+(1,\ldots, 1))C_v 
            = \sum_{v \in \IF_2^d} f_{x_M}^{1/2}(v) \psi(C_v) \\
            &= \psi\left(
            \sum_{v \in \IF_2^d} \frac{1}{2^d} f_{x_M}^{1/2}(v) C_v
            \right) \\
            &= \psi\left(\sum_{v \in \IF_2^d}
                \Big(\frac{1}{2^d} \sum_{w \in \IF_2^d} 
                (-1)^{\angles{v,w}} f_{x_M}^{1/2}(w)
                \Big)
                \Big(
                \sum_{w \in \IF_2^d}(-1)^{\angles{v,w}}C_w
                \Big)
            \right)\\
            & = \sum_{v \in \IF_2^d} \Delta_{1/2}^v(f)(x_M) \psi(F_v).
        \end{align*}
        Putting this into the definition of the intersection paring we get
        \begin{align*}
            \angles{f_0, \ldots, f_d} 
            & = \ldeg_{I^d}\left(\sum_{v_0, \ldots, v_d \in \IF_2^d}\left(
                    \prod_{i=0}^d \Delta^{v_i}_{1/2}(f)(x_M) \psi(F_{v_i})
            \right)\right) \\
            & = \sum_{v_0,\ldots, v_d \in \IF_2^d} 
            \ldeg_{I^d}(\prod_{i=0}^d F_{v_i})
            \prod_{i=0}^d \Delta^{v_i}_{1/2}(f_i)(x_M).
        \end{align*}
        We used hereby that the degree is invariant under $\psi$ (\cref{schnitt-lok-psi}).
        By the definition of the lattice approximation $\tilde\Delta$ we finally get
        \[ \prod_{i=0}^d \Delta^{v_i}_{\frac{1}{2}}(f_i)(x_M) 
            = \int_{\Gamma_1^d} \prod_{i=0}^d \tilde\Delta^{v_i}_1(f_i), \]
        the claim in the case $n=1$.
    \end{par}
    \paragraph{\textbf{Case 2: $n>1$}}
    \begin{par}
        By \cref{limit-schnittpaar-bw-wohldef} we have
        \[ 
            \angles{f_0, \ldots, f_d}_{W,n} 
            = n^d \angles{f_0, \ldots, f_d}_{W_n,1} 
        \]
        and by \cref{limit-unt-int-eq} 
        for each function $f \in \sms(\Gamma^d)$
        \[ 
            n^d \int_{|\Gamma^d|} f = \int_{|\Gamma_n^d|} f
        \]
        holds.
        Using \cref{schnitt-delta-kommut} and the claim in the case $n=1$ 
        we conclude
        \begin{align*}
            \angles{f_0, \ldots, f_d}_{W,n} 
            & = n^d \angles{f_0, \ldots, f_d}_{W_n,1} \\
            & = n^d \sum_{v_0, \ldots, v_d \in \IF_2^d}\ldeg_{I^d}(\prod_{i=0}^d F_{v_i})
            \int_{\Gamma_n^d}\prod_{i=0}^d \tilde\Delta^{v_i}_1(f_i) \\
            & = n^{2d} \sum_{v_0, \ldots, v_d \in \IF_2^d}\ldeg_{I^d}(\prod_{i=0}^d F_{v_i})
            \int_{\Gamma^d}\prod_{i=0}^d \tilde\Delta^{v_i}_n(f_i).
        \end{align*}
    \end{par}
\end{proof}

\begin{par}
    This description of the intersection pairing is used to generalise 
    onto a bigger set of functions. For this purpose we use the following
    approximation of continuous functions by piecewise affine functions.
\end{par}
\begin{defn}
    \label{limit-approx-def}
    Let $f \in \smd(\Gamma^d)$ and $n \in \IN$. 
    The function $f^{(n)} \in \lid(\Gamma_n^d)$, which is uniquely defined by
    $f^{(n)}(p) = f(p)$ for each $p \in (\gamma_n^d)_0$,
    is called the 
    \emph{$n$\nobreakdash-th standard approximation} of $f$.
\end{defn}
\begin{bem}
    \label{limit-approx-delta}
    To calculate $\tilde\Delta^v_n(f)$ we need only the values
    of $f$ 
    on the vertices of the $n$\nobreakdash-th subdivision.
    Thus we have for each $v \in \IF_2^d$
    \[ \tilde\Delta^v_n(f) = \tilde\Delta^v_n(f^{(n)}). \]
\end{bem}

\begin{par}
    We can now formulate a general convergence result 
    for the standard approximation of functions in 
    $\sms(\Gamma^d)$.
    As precondition for all results we need the 
    vanishing condition from \cite[4.7]{publ1}:
\end{par}
\begin{defn}
    \label{limit-konv-bed}
    Let $d \in \IN$. 
    We say $d$ satisfies the \emph{vanishing condition},
    if for each partition $\Part$
    of the set $\{1, \ldots, d\}$ and
    each $v_0, \ldots, v_d \in \IF_2^d$ 
    the relation
    $\sum_{i=0}^d \alpha(\Part,v_i) < d+|\Part|$ implies 
    the equation
    \begin{equation}
        \label{limit-allg-bed}
        \ldeg_{I^d}(\prod_{i=0}^d F_{v_i}) = 0
    \end{equation}
    in the combinatorial Chow ring $\KC(I^d)$.
\end{defn}
\begin{par}
    By explicite calculations of the intersection numbers 
    we have already shown in \cite[Cor 4.36]{publ1}:
\end{par}
\begin{lem}
    \label{limit-vaco-23}
    For $d=2$ and $d=3$ the vanishing condition \cref{limit-konv-bed} is satisfied.
\end{lem}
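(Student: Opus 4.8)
The plan is to reduce the statement to a finite computation of the local degrees $\ldeg_{I^d}(\prod_{i=0}^d F_{v_i})$ in the combinatorial Chow ring $\KC(I^d)$ of \cref{schnitt-chw}, and to verify the vanishing condition of \cref{limit-konv-bed} case by case for $d=2,3$. A first reduction is immediate: if some $v_i=(0,\ldots,0)$, then $F_{v_i}=\sum_{C'\in(I^d)_0}C'$ is exactly the generator appearing in relation \eqref{schnitt-chw2}, so the whole product vanishes in $\KC(I^d)$; on the combinatorial side $\alpha(\Part,v_i)=0$ in this case, so the hypothesis $\sum_i\alpha(\Part,v_i)<d+|\Part|$ is only made easier by dropping that factor. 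Hence I may assume $v_i\neq 0$, i.e. $J_i:=\supp(v_i)\neq\emptyset$, for all $i$.

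Second, I would exploit the structural fact that $F_{v_i}=\pr_{J_i}^*F_{v_i|_{J_i}}$ is pulled back along the projection forgetting the coordinates outside $J_i$; this follows from $\pr_i^*(C)=\sum_{\pr_i(C')=C}C'$ together with the definition of $F_v$. For any index set $T\subseteq\{0,\ldots,d\}$ the partial product $\prod_{i\in T}F_{v_i}$ is then pulled back from $\KC(I^U)$ with $U=\bigcup_{i\in T}J_i$, and $\KC(I^U)$ has top degree $|U|+1$; thus $\ldeg_{I^d}(\prod_i F_{v_i})\neq0$ forces $|T|\le|U|+1$ for every $T$. Rewriting the combinatorial condition via $\sum_i\alpha(\Part,v_i)=\sum_{m}n_m$ and $d+|\Part|=\sum_m(|A_m|+1)$, where $\Part=\{A_1,\ldots,A_k\}$ and $n_m=\#\{i:J_i\cap A_m\neq\emptyset\}$, the coarse bound applied to $T_m=\{i:J_i\subseteq A_m^{\,c}\}$ gives $(d+1)-n_m\le(d-|A_m|)+1$, i.e. $n_m\ge|A_m|$ for each block. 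This already disposes of the single-block partition and, together with the $S_d$-symmetry permuting coordinates and the symmetry $\psi$ of \cref{schnitt-lok-psi} (under which $\ldeg_{I^d}$ is invariant), cuts the list of tuples to be examined down to a manageable size.

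The main obstacle is that this pullback estimate yields only $\sum_m n_m\ge d$, whereas the vanishing condition demands the strictly stronger $\sum_m n_m\ge d+|\Part|$; the missing slack of $|\Part|$ cannot be recovered formally, because a partial product of the shape $\pr_U^*(\text{top class})$ may still pair nontrivially with the remaining divisors along its positive-dimensional fibre. Closing this gap is precisely what forces an explicit evaluation of the residual intersection numbers. For the remaining tuples I would reduce each monomial $C_{w_0}\cdots C_{w_d}$ modulo the ideal of \cref{schnitt-chw}: relation \eqref{schnitt-chw1} kills every product whose vertex set is not a simplex of $I^d$ (a maximal chain in $\IF_2^d$), while relations \eqref{schnitt-chw2} and \eqref{schnitt-chw3} rewrite the monomials with repeated factors, the self-intersections, in terms of the squarefree simplex monomials, on which $\ldeg_{I^d}$ equals $1$ by \cref{schnitt-comb-props}. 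Carrying out this bookkeeping for $d=2$ and $d=3$ produces a complete table of the numbers $\ldeg_{I^d}(\prod_{i=0}^d F_{v_i})$, and a direct inspection of that table against every partition $\Part$ confirms that $\ldeg_{I^d}(\prod_i F_{v_i})=0$ whenever $\sum_i\alpha(\Part,v_i)<d+|\Part|$. The laborious part is exactly this finite but sizeable analysis of the repeated-vertex monomials, and it is also the reason the argument does not extend verbatim to large $d$, where the analogous statement remains the Vanishing Conjecture.
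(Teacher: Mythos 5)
Your proposal is correct and takes essentially the same route as the paper, whose proof of this lemma is just a citation of the explicit tables of the degrees $\ldeg_{I^d}(\prod_{i}F_{v_i})$ computed in \cite[Cor 4.36]{publ1} (the $d=2$ table is recalled later in the proof of \cref{limit-formel-zhang}, the $d=3$ table in \cref{limit-spez-3}); your preliminary reductions (discarding $v_i=0$ via the relation \cref{schnitt-chw2}, the pullback bound $n_m\geq|A_m|$, and the $S_d$- and $\psi$-symmetries) are sound preprocessing for exactly that finite verification. The decisive tables are asserted rather than exhibited in your write-up, but that matches the level of detail of the paper's own one-line proof, and you correctly identify both why the coarse bound cannot close the gap of $|\Part|$ and why the argument does not extend to general $d$.
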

\begin{satz}
    \label{limit-allg}
    If $d \in \IN$ satisfies the vanishing condition \cref{limit-konv-bed},
    then for each of the functions
    $f_0, \ldots, f_d \in \smd(\Gamma^d)$ the limit
    \[ \angles{f_0, \ldots, f_d} := 
        \lim_{n \to \infty} \angles{f_0^{(n)}, \ldots, f_n^{(n)}}_{W,n} \]
    exists.
    It can be calculated by
    \[ 
        \angles{f_0, \ldots, f_d} = 
        \sum_{\Part \textrm{ Partition}}
        \frac{1}{2^{d+|\Part|}}
        \sum_{\substack{
                v_0, \ldots, v_d \in \IF_2^d,\\ 
                \sum \alpha(v_i, \Part) = d+|\Part|
            }}
        \ldeg_{I^d}(\prod_{i=0}^d F_{v_i})
        \int_{\Delta_{\Part}} \prod_{i=0}^d D^{v_i}_{\alpha(v_i,\Part)}(f_i).
    \]
\end{satz}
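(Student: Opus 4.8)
The plan is to start from the explicit formula for piecewise affine functions established in \cref{limit-lin-int} and to take the limit $n \to \infty$ term by term, the vanishing condition being used only to discard the terms that would otherwise diverge. First I would apply \cref{limit-lin-int} to the standard approximations and use \cref{limit-approx-delta} to replace $\tilde\Delta_n^{v_i}(f_i^{(n)})$ by $\tilde\Delta_n^{v_i}(f_i)$, obtaining
\[
\angles{f_0^{(n)}, \ldots, f_d^{(n)}}_{W,n} = n^{2d} \sum_{v_0, \ldots, v_d \in \IF_2^d} \ldeg_{I^d}\Big(\prod_{i=0}^d F_{v_i}\Big) \int_{\Gamma^d} \prod_{i=0}^d \tilde\Delta_n^{v_i}(f_i).
\]
The integrand depends on $x$ only through the cube centre $\tilde x^{(n)}$, so I would split the integral over $\Gamma^d$ according to the partition type $\Part(\tilde x^{(n)})$ and apply \cref{limit-subint-lem} to each piece. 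This converts $\int_{\Gamma^d}(\cdots)\charfkt_{\{\Part(\tilde x^{(n)})=\Part\}}$ into $n^{|\Part|-d}\int_{\diag_\Part}(\cdots)$, so that the global prefactor $n^{2d}$ becomes $n^{d+|\Part|}$ for the $\Part$-contribution.

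On the generalised diagonal $\diag_\Part$ every point $x$ has $d(x)=\Part$, so \cref{limit-tdelta-konv} applies with $\alpha = \alpha(v_i,\Part)$ and yields $(2n)^{\alpha(v_i,\Part)}\tilde\Delta_n^{v_i}(f_i) \to D^{v_i}_{\alpha(v_i,\Part)}(f_i)$; by \cref{limit-sms-konvergenz} the limits are continuous on $\diag_\Part$ and the convergence is uniform on the compact diagonal, so I may interchange limit and integral. Factoring
\[
\prod_{i=0}^d \tilde\Delta_n^{v_i}(f_i) = \frac{1}{(2n)^{\sum_i \alpha(v_i,\Part)}} \prod_{i=0}^d \Big((2n)^{\alpha(v_i,\Part)}\tilde\Delta_n^{v_i}(f_i)\Big)
\]
isolates the scalar prefactor $n^{d+|\Part|}/(2n)^{\sum_i\alpha(v_i,\Part)} = 2^{-\sum_i\alpha(v_i,\Part)}\,n^{\,d+|\Part|-\sum_i\alpha(v_i,\Part)}$, while the remaining integral tends to $\int_{\diag_\Part}\prod_i D^{v_i}_{\alpha(v_i,\Part)}(f_i)$.

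The fate of each $(\Part,v_0,\ldots,v_d)$-term is then dictated by the sign of $d+|\Part|-\sum_i\alpha(v_i,\Part)$. When $\sum_i\alpha(v_i,\Part) > d+|\Part|$ the prefactor tends to $0$ and the term disappears; when equality holds, the prefactor equals the constant $2^{-(d+|\Part|)}$ and the term contributes exactly the summand in the asserted formula. The only obstruction is the case $\sum_i\alpha(v_i,\Part) < d+|\Part|$, where the prefactor diverges --- and this is precisely where I would invoke the hypothesis: by the vanishing condition \cref{limit-konv-bed} one has $\ldeg_{I^d}(\prod_i F_{v_i})=0$ throughout this range, so these terms vanish identically for every $n$. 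Summing the surviving equality terms over all partitions $\Part$ gives both the existence of the limit and the closed formula. The heart of the argument, and the sole place the assumption on $d$ enters, is this last cancellation: without \cref{limit-konv-bed} (which holds for $d\in\{2,3\}$ by \cref{limit-vaco-23}) the divergent prefactors would destroy convergence, so restricting to such $d$ is exactly what is needed.
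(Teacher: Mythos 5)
Your proposal follows essentially the same route as the paper's proof: apply \cref{limit-lin-int}, decompose the integral along the pixelated diagonals via \cref{limit-subint-lem}, use \cref{limit-tdelta-konv} for the pointwise limits of the rescaled lattice differences, and invoke the vanishing condition precisely to eliminate the terms whose prefactor $n^{d+|\Part|-\sum_i\alpha(v_i,\Part)}$ would diverge, with the same bookkeeping producing the coefficient $2^{-(d+|\Part|)}$. The only minor difference is that the paper justifies the interchange of limit and integral by dominated convergence (global boundedness of the terms $T_n$) rather than your appeal to uniform convergence on $\diag_\Part$, which is not in fact compact; this is a cosmetic variation, not a different argument.
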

\begin{proof}
    \begin{par}
        We use the description of the intersection pairing from \cref{limit-lin-int},
        \begin{equation}
            \label{limit-allg-eq1}
            \angles{f_0^{(n)}, \ldots, f_d^{(n)}} = 
            n^{2d} \sum_{v_0, \ldots, v_d} \ldeg_{I^d}(\prod_{i=0}^d F_{v_i})
            \int_{\Gamma^d} \prod_{i=0}^d \tilde\Delta_n^{v_i}(f_i).
        \end{equation}
        Since the sum of the characteristic functions
        $\sum\limits_{\Part\textrm{ Partition}} \charfkt_{\{x\in|\Gamma|^d \mid \diag(\tilde
                x^{n})=\Part\}}$ 
        is the constant function $\charfkt_{\Gamma^d}$,
        we may split the integral of \cref{limit-allg-eq1} 
        into components along the different ``pixelated diagonals'' (\cref{pixelated})
        and apply \cref{limit-subint-lem}:
        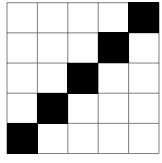
\begin{figure}[h]
            \begin{center}
                \begin{tikzpicture}[scale=2]
                    \draw[step=0.2 cm, help lines] (0,0) grid (1,1);
                    \foreach \x in {0,0.2,0.4,0.6,0.8}
                      \fill[xshift=\x cm, yshift=\x cm] (0,0) rectangle (0.2,0.2);
                \end{tikzpicture}\\
                \caption{
                The ``pixelated diagonal'' $\{x \mid d(\tilde x^{(n)}) = \Part\}$ 
                with $n=5$ and $\Part=\{\{1,2\}\}$
                }
                \label{pixelated}
            \end{center}
        \end{figure}
        \begin{align*}
            n^{2d} \int_{\Gamma^d}\prod_{i=0}^d \tilde\Delta_n^{v_i}(f_i) 
            & = \sum_{\Part \textrm{ Partition}}
            n^{2d} \int_{\Gamma^d} \charfkt_{\{d(\tilde x^{(n)}) = \Part\}}
            \prod_{i=0}^d \tilde\Delta_n^{v_i}(f_i) \\
            & = \sum_{\Part \textrm{ Partition}} 
            n^{d+|\Part|}\int_{\diag_{\Part}} 
            \charfkt_{\{d(\tilde x^{(n)}) = \Part\}}
            \prod_{i=0}^d\tilde\Delta_n^{v_i}(f_i).
        \end{align*}
        Together with \cref{limit-allg-eq1} this implies
        \begin{gather*}
            \angles{f_0^{(n)}, \ldots, f_d^{(n)}} 
            = \sum_{\Part\textrm{ Partition}}\frac{1}{2^{d+|\Part|}}
            \sum_{v_0, \ldots, v_d \in \IF_2^d}
            \ldeg_{I^d}(\prod_{i=0}^d F_{v_i})
            \int_{\diag_\Part}
            T_n(\Part, v_0, \ldots, v_d) \\
            \intertext{with}
            T_n(\Part, v_0, \ldots, v_d) 
            := (2n)^{d+|\Part|}\charfkt_{\{d(\tilde x^{(n)})=\Part\}} \prod_{i=0}^d \tilde\Delta_n^{v_i}(f_i).
        \end{gather*}
        We study the convergence of the terms $T_n(\ldots)$:
        By the precondition \cref{limit-allg-bed} we only have to deal with terms where
        $\sum {\alpha(\Part,v_i)} \geq d+|\Part|$.
        By dominated convergence it suffices to show that all 
        $T_n(\Part, v_0, \ldots, v_d)$ are globally bounded and converge to 
        \[ T(\Part, v_0, \ldots, v_d):= \begin{cases}
                0 & \textrm{if } \sum \alpha(v_i, \Part) > d+|\Part|, \\
                \prod_{i=0}^d D^{v_i}_{\alpha(v_i, \Part)}(f_i) 
                & \textrm{if } \sum \alpha(v_i, \Part) = d+|\Part|.\\
            \end{cases}
        \]
    \end{par}
    \begin{par}
        For this purpose we rewrite $T_n$ as
        \[ T_n(\Part, v_0, \ldots, v_d) = 
            \charfkt_{\{d(\tilde x^{(n)})=\Part\}} 
            \cdot
            \Big(
            (2n)^{d+|\Part|-\sum \alpha(v_i,\Part)}
            \Big)
            \cdot
            \Big(
            \prod_{i=0}^d
            (2n)^{\alpha(v_i,\Part)}
            \tilde\Delta^{v_i}_n(f_i)
            \Big)
        \]
        and discuss each part individually.
        By \cref{limit-tdelta-konv} the function 
        $(2n)^{\alpha(v_i,\Part)} \Delta^{v_i}_n(f_i)$ is bounded
        and converges to
        $D^{v_i}_{\alpha(\Part,v_i)}$.
        The characteristic function 
        $\charfkt_{\{\Part(\tilde x^{(n)})=\Part\}}$
        is obviously bounded and converges to
        $\charfkt_{\{x \mid \diag(x) = \Part\}} = \charfkt_{\diag(\Part)}$.
        Finally the behaviour of the term 
        $n^{d+|\Part|-\sum\alpha(v_i,\Part)}$
        yields the convergence
        $\lim\limits_{n \to \infty} T_n(\Part, v_0, \ldots, v_d) = T(\Part, v_0, \ldots, v_d)$.
    \end{par}
\end{proof}

\begin{par}
    By \cref{limit-vaco-23} we are able to give a formulation of the intersection pairing
    without precodition for $d=2$ and $d=3$.
    The case $d=2$ yields the result of Zhang \cite[Prop 3.3.1, Prop 3.4.1]{zhang}:
\end{par}
\begin{kor}
    \label{limit-formel-zhang}
    Let $d=2$ and $f_0,f_1,f_2 \in \smd(\Gamma^2)$ continuous functions
    which are smooth on simplices. Let $f_i^{(n)}$ be the standard approximation
    of $f_i$.
    Then the limit of the triple pairing 
    $\lim_{n \to \infty} \angles{f^{(n)}_0, \ldots, f^{(n)}_2}$
    exists and can be calculated by
    \[
        \lim_{n \to \infty} \angles{f^{(n)}_0, \ldots, f^{(n)}_2}
        = \angles{f_0,f_1,f_2}_{\mathrm{sm}}
        + \angles{f_0,f_1,f_2}_{\mathrm{sing}}
    \]
    where 
    \begin{align*}
        \angles{f_0,f_1,f_2}_{\mathrm{sm}}
        &= \sum_{\substack{
                v_0,v_1,v_2 \in \IF_2^2 \\
                \{v_0,v_1,v_2\}=\{(1,0),(0,1),(1,1)\}
            }}
        \int_{|\Gamma^2|}
        D^{v_0}_{|v_0|}(f_0)
        D^{v_1}_{|v_1|}(f_1)
        D^{v_2}_{|v_2|}(f_2),
        \\
        \angles{f_0,f_1,f_2}_{\mathrm{sing}}
        &= \sum_{\substack{
                v_0,v_1,v_2 \in \IF_2^2 \\
                {\scriptscriptstyle\{v_0,v_1,v_2\}=\{(1,0),(0,1),(1,1)\}}
            }}
        2\int_{\diag}
        D^{v_0}_1(f_0)
        D^{v_1}_1(f_1)
        D^{v_2}_1(f_2)\\
        & \quad - 4\int_{\diag}
        D^{\scriptscriptstyle(1,1)}_1(f_0)
        D^{\scriptscriptstyle(1,1)}_1(f_1)
        D^{\scriptscriptstyle(1,1)}_1(f_2).
    \end{align*}
\end{kor}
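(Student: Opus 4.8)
The plan is to read off \cref{limit-formel-zhang} as the special case $d=2$ of \cref{limit-allg}. Since $d=2$ satisfies the vanishing condition by \cref{limit-vaco-23}, \cref{limit-allg} applies verbatim and already delivers both the existence of the limit and the master formula
\[
\angles{f_0,f_1,f_2} = \sum_{\Part} \frac{1}{2^{2+|\Part|}} \sum_{\substack{v_0,v_1,v_2\in\IF_2^2\\ \sum_i\alpha(v_i,\Part)=2+|\Part|}} \ldeg_{I^2}\Big(\prod_{i=0}^2 F_{v_i}\Big)\int_{\diag_\Part}\prod_{i=0}^2 D^{v_i}_{\alpha(v_i,\Part)}(f_i).
\]
The only remaining work is to evaluate this finite sum. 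The set $\{1,2\}$ has exactly two partitions: the discrete partition $\Part_1=\{\{1\},\{2\}\}$, for which $\diag_{\Part_1}(\Gamma^2)$ exhausts $|\Gamma^2|$ up to a null set and $\alpha(v,\Part_1)=|v|$; and the coarse partition $\Part_2=\{\{1,2\}\}$, for which $\diag_{\Part_2}(\Gamma^2)=\diag$ and $\alpha(v,\Part_2)=1$ for $v\neq 0$ (and $=0$ for $v=0$). I expect $\Part_1$ to produce the summand $\angles{f_0,f_1,f_2}_{\mathrm{sm}}$ and $\Part_2$ to produce $\angles{f_0,f_1,f_2}_{\mathrm{sing}}$.

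Next I would cut down the index tuples using two structural facts about $\KC(I^2)$. First, $F_{(0,0)}=\sum_{w}C_w$, so by the relation \cref{schnitt-chw2} the product $F_{(0,0)}\cdot\alpha$ vanishes for every $\alpha$ of positive degree; hence any tuple containing the zero vector has $\ldeg_{I^2}(\prod F_{v_i})=0$. Second, the automorphism $\psi$ of \cref{schnitt-lok-psi} satisfies $\psi(F_v)=(-1)^{\angles{v,(1,\dots,1)}}F_v$ and preserves the local degree, so $\ldeg_{I^2}(\prod F_{v_i})=0$ unless $\sum_i\angles{v_i,(1,1)}\equiv 0 \pmod 2$. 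For $\Part_1$ the constraint $\sum|v_i|=4$ leaves only the multisets $\{(1,1),(1,1),(0,0)\}$ (killed by the first fact) and $\{(1,1),(1,0),(0,1)\}$, $\{(1,1),(1,0),(1,0)\}$, $\{(1,1),(0,1),(0,1)\}$; for $\Part_2$ the constraint $\sum\alpha=3$ forces all three $v_i$ nonzero, i.e.\ $v_i\in\{(1,0),(0,1),(1,1)\}$.

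The technical heart is then the explicit computation of the surviving numbers $\ldeg_{I^2}(\prod F_{v_i})$ by the simplicial calculus of \cref{schnitt-comb-props}. Expanding each $F_v$ in the vertex basis and reducing the degree-$3$ monomials with the defining relations \cref{schnitt-chw}, one finds
\[
\ldeg_{I^2}(F_{(1,0)}F_{(0,1)}F_{(1,1)})=16,\quad \ldeg_{I^2}(F_{(1,0)}^2 F_{(1,1)})=\ldeg_{I^2}(F_{(0,1)}^2 F_{(1,1)})=0,\quad \ldeg_{I^2}(F_{(1,1)}^3)=-32.
\]
The vanishing of the mixed squares collapses both partitions onto index sets equal to $\{(1,0),(0,1),(1,1)\}$ (together with, for $\Part_2$, the single tuple $((1,1),(1,1),(1,1))$). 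Substituting into the master formula gives, for $\Part_1$, the prefactor $\tfrac1{2^4}\cdot 16=1$ on the six orderings of $\{(1,0),(0,1),(1,1)\}$ integrated over $|\Gamma^2|$, which is precisely $\angles{f_0,f_1,f_2}_{\mathrm{sm}}$; and for $\Part_2$, the prefactor $\tfrac1{2^3}\cdot 16=2$ on those six orderings integrated over $\diag$, and $\tfrac1{2^3}\cdot(-32)=-4$ on the diagonal tuple, which is precisely $\angles{f_0,f_1,f_2}_{\mathrm{sing}}$. The main obstacle is the honest bookkeeping of this combinatorial degree computation, and in particular verifying $\ldeg_{I^2}(F_{(1,0)}^2F_{(1,1)})=0$, since that is exactly what forces the smooth part to see only distinct index sets.
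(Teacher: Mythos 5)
Your proposal is correct and follows essentially the same route as the paper: both derive the corollary by specializing \cref{limit-allg} to $d=2$ (justified by \cref{limit-vaco-23}), enumerating the two partitions of $\{1,2\}$, and inserting the intersection numbers $\ldeg(F_{(1,0)}F_{(0,1)}F_{(1,1)})=16$, $\ldeg(F_{(1,1)}^3)=-32$ and the vanishing of all other triple products, with the prefactors $\tfrac1{16}\cdot 16=1$ and $\tfrac18\cdot 16=2$, $\tfrac18\cdot(-32)=-4$ exactly as you compute. The only difference is that the paper simply cites these degree values from \cite[Thm 4.32]{publ1} rather than recomputing them from the relations of $\KC(I^2)$, and is correspondingly terser about why the multisets $\{(1,1),(1,0),(1,0)\}$ and $\{(1,1),(0,1),(0,1)\}$ drop out of the smooth part --- a point your bookkeeping makes explicit.
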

\begin{proof}
    \begin{par}
        Recall the intersection numbers calculated in \cite[Thm 4.32]{publ1},
        \[
            \ldeg(F_{v_1}F_{v_2} F_{v_3}) = \begin{cases}
                -32 & \textrm{ if } v_1=v_2=v_3 = (1,1), \\
                16  & \textrm{ if } \{v_1,v_2,v_3\} = \{(1,0),(0,1),(1,1)\}, \\
                0   & \textrm{ otherwise}.
            \end{cases}
        \]
    \end{par}
    \begin{par}
        The result is a direct consequence of \cref{limit-allg}:
        The summand with $\Part=\{\{1\},\{2\}\}$ yields the nonsingular part
        $\multiangles_{\mathrm{sm}}$. 
        Since for $v_0=v_1=v_2=(1,1)$ the equation
        \[ 
            \sum_{i=0}{2}\alpha(v_i,\Part) = \sum_{i=0}^2|v_i|= 6 > 4 = d+|\Part|
        \]
        holds, we only have to deal with elements $v_0,v_1,v_2 \in \IF_2^d$
        where $\{v_0,v_1,v_2\} = \{(1,0),(0,1),(1,1)\}$.
    \end{par}
    \begin{par}
        For the summand with $\Part=\{\{1,2\}\}$ we must take all 
        non-trivial intersections $F_{v_0}F_{v_1}F_{v_2}$ into account.
        The resulting term gives exactly the singular part 
        $\multiangles_{\mathrm{sing}}$ 
        in above formula.
    \end{par}
\end{proof}
\begin{bem}
    The exact formulation of \cite{zhang} is obtained 
    using \cref{limit-zhang-delta}, i.e., 
    by identifying
    \begin{align*}
         D_1^{(1,0)}(f)(x) &= \frac{\partial f}{\partial x_1}(x), \\
         D_1^{(0,1)}(f)(x) &= \frac{\partial f}{\partial x_2}(x), \\
         D_2^{(1,1)}(f)(x) &= \frac{\partial^2 f}{\partial x_1 \partial x_2}(x)
         \quad \textrm{if }x \not \in \diag, \\
         D_1^{(1,1)}(f)(x) &= \frac{1}{2} \frac{\partial}{\partial x_1}(f^+ - f^-)(x) =
         \frac{1}{2}\delta(f)(x)
         \quad \textrm{if }x \in \diag.
    \end{align*}
    Then the formula for the smooth resp. singular part
    becomes
    \begin{align*}
        \angles{f_0,f_1,f_2}_{\mathrm{sm}} 
        & = \int_{\Gamma^2 \setminus \diag} 
        \frac{\partial f_0}{\partial x_1}(x)
        \frac{\partial f_1}{\partial x_1}(x)
        \frac{\partial^2 f_2}{\partial x_1 \partial x_2}(x)
        \textrm{+ permutations}, \\
        \angles{f_0,f_1,f_2}_{\mathrm{sing}}
        & = \int_{\diag}\left(
            \frac{\partial f_0}{\partial x_1}(x)
            \frac{\partial f_1}{\partial x_2}(x)
            \delta(f_2)(x)
            \textrm{+ permutations} \right) \\
            & \quad 
            - \int_{\diag}\left(\frac{1}{2}\delta(f_0)(x)\delta(f_1)(x)\delta(f_2)(x)
        \right).
    \end{align*}
\end{bem}
\begin{par}
    A similar formula is deducible from \cref{limit-allg} in the case $d=3$. 
    For clarity reasons we only calculate the non-singular part of the pairing.
    This is achieved by calculating the intersection pairing
    only of functions $f \in \smq(\Gamma^3)$, i.e., functions smooth on cubes.
    For these functions the singular part vanishes.
\end{par}

\begin{satz}
    \label{limit-spez-3}
    Let $f_0, \ldots, f_3 \in \smq(\Gamma^3)$ be functions smooth on cubes.
    Then the limit of the quadruple pairing $\angles{f_0,f_1,f_2,f_3}$ 
    exists and can be calculated as
    \begin{align*}
        \lim_{n \to \infty} \angles{f_0^{(n)}, \ldots, f_3^{(n)}} 
        & = \int_{\Gamma^3} 
        \sum_{\substack{
                v_0,v_1,v_2,v_3 \in \IF_2 \\
                \{v_0,v_1,v_2,v_3\} \in B
            }}
        \prod_{i=0}^dD^{v_i}_{|v_i|}(f_i),
    \end{align*}
    where the set $B \subset \mathcal{P}(\IF_2^3)$ is defined as follows
    \begin{align*}
        B:= \Big\{
            &\{(1,0,0), (0,1,0), (0,0,1), (1,1,1)\}, \\
            &\{(1,0,0), (0,1,0), (1,0,1), (0,1,1)\}, \\
            &\{(1,0,0), (0,0,1), (1,1,0), (0,1,1)\}, \\
            &\{(0,1,0), (0,0,1), (1,1,0), (1,0,1)\} \Big\}.
    \end{align*}
\end{satz}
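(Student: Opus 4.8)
The plan is to specialise the general formula of \cref{limit-allg} to $d=3$ and to exploit the extra regularity of functions smooth on cubes. Since $d=3$ satisfies the vanishing condition by \cref{limit-vaco-23}, \cref{limit-allg} applies and writes the limit as a sum over partitions $\Part$ of $\{1,2,3\}$ of $\tfrac{1}{2^{3+|\Part|}}\sum_{\sum\alpha(v_i,\Part)=3+|\Part|}\ldeg_{I^3}(\prod_i F_{v_i})\int_{\diag_\Part}\prod_i D^{v_i}_{\alpha(v_i,\Part)}(f_i)$. For $f_i\in\smq(\Gamma^3)$ the generalised differentials collapse: each $f_i$ is smooth on every closed cube, so \cref{limit-smq-konvergenz} gives $\tfrac{1}{h^{|v|}}\Delta_h^{v}f_i\to D^v f_i$ with $D^v f_i$ the ordinary iterated partial derivative, whence $D^v_\alpha(f_i)=0$ for $\alpha<|v|$ and $D^v_{|v|}(f_i)=D^v f_i$. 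As $\alpha(v,\Part)\le|v|$ always, only tuples with $\alpha(v_i,\Part)=|v_i|$ for every $i$ survive, and the degree constraint becomes $\sum_i|v_i|=3+|\Part|$.

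I would then cut the surviving tuples down by symmetry. The automorphism $\psi$ of \cref{schnitt-lok-psi} is the coordinate reflection $C_v\mapsto C_{v+(1,1,1)}$; by the same reasoning one gets for every $t\in\IF_2^3$ a graded automorphism $\sigma_t$ of $\KC(I^3)$ with $C_v\mapsto C_{v+t}$, compatible with $\ldeg_{I^3}$, and a Fourier computation yields $\sigma_t(F_v)=(-1)^{\angles{v,t}}F_v$. Applying $\sigma_t$ to $\prod_i F_{v_i}$ and using $\ldeg_{I^3}\circ\sigma_t=\ldeg_{I^3}$ forces $\ldeg_{I^3}(\prod_i F_{v_i})=(-1)^{\angles{\sum_i v_i,\,t}}\ldeg_{I^3}(\prod_i F_{v_i})$ for all $t$, so the intersection number vanishes unless $\sum_i v_i=0$ in $\IF_2^3$. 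In particular $\sum_i|v_i|$ must be even; together with $\sum_i|v_i|=3+|\Part|$ this excludes every partition with $|\Part|=2$, leaving only the discrete partition ($|\Part|=3$) and the coarsest one ($|\Part|=1$).

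To eliminate the coarsest partition I would use a factorisation property of $\ldeg_{I^3}$: if $\{1,2,3\}$ splits as $A\sqcup A^c$ with both parts nonempty and every $\supp(v_i)$ contained in $A$ or in $A^c$, then by functoriality of $\KC$ under the projections $I^3\to I^{|A|}$ and $I^3\to I^{|A^c|}$ the class $\prod_i F_{v_i}$ is a product of pull-backs from the two sub-cubes; by the Künneth behaviour of the local degree this has nonzero degree only if each pulled-back factor is top-dimensional on its sub-cube, forcing $|A|{+}1$ and $|A^c|{+}1$ factors, i.e.\ $d+2$ in total, against $d+1$. Hence $\ldeg_{I^3}(\prod_i F_{v_i})=0$ whenever the supports split. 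For $|\Part|=1$ the condition $\alpha(v_i,\Part)=|v_i|$ forces each $|v_i|\le1$, and $\sum_i|v_i|=4$ over four vectors makes all $v_i$ of weight one; if they miss a coordinate the supports split and the degree vanishes, while if they cover all three the multiset is $\{e_1,e_2,e_3,e_j\}$ with $\sum_i v_i=(1,1,1)+e_j\neq0$, killed by the symmetry of the previous step. Thus the coarsest partition contributes nothing and the singular part vanishes.

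It remains to treat the discrete partition, where $\diag_\Part$ is the full-measure stratum so $\int_{\diag_\Part}=\int_{\Gamma^3}$, where $\alpha(v_i,\Part)=|v_i|$, and where $\sum_i|v_i|=6$. Here I would classify the admissible tuples: a zero factor gives $F_0\cdot(\text{positive degree})=0$ by the relations of \cref{schnitt-chw}, so all $v_i\neq0$; with $\sum_i v_i=0$ each coordinate is covered an even number of times, and non-splitting supports force each coordinate to be covered, hence (since $6=2+2+2$) exactly twice. A short enumeration then shows the only such four-element sets are precisely the members of $B$ --- the weight pattern $(1,1,1,3)$ giving the first and $(1,1,2,2)$ the other three --- every repeated-vector solution having splitting supports and therefore degree zero. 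The final input is the explicit evaluation $\ldeg_{I^3}(\prod_{v\in T}F_v)=64$ for each $T\in B$, carried out with the local simplicial calculus of \cite[4.3]{publ1}; together with the prefactor $\tfrac1{2^{3+3}}=\tfrac1{64}$ this yields coefficient $1$ in front of $\int_{\Gamma^3}\prod_i D^{v_i}_{|v_i|}(f_i)$, which is the asserted formula. The main obstacle is this last intersection-number computation together with establishing the Künneth factorisation of $\ldeg_{I^3}$; the symmetry and support reductions are exactly what make the problem finite, collapsing the sum to the four configurations of $B$.
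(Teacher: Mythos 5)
Your proposal is correct in outline and arrives at the right formula, but it takes a genuinely different route through the combinatorial core. The paper's proof is short because it invokes \cite[Thm 4.33]{publ1} directly: that theorem already says that for $\sum_i|v_i|=6$ the degree $\ldeg_{I^3}(\prod_i F_{v_i})$ is nonzero exactly for the four sets in $B$, with common value $2^6$, which cancels the prefactor $1/2^{3+3}$; the remaining partitions are dismissed by the analytic vanishing $D^{v}_{\alpha}(f)=0$ for $\alpha<|v|$ on functions smooth on cubes. You instead re-derive the classification of $B$ from two structural principles about $\KC(I^3)$ --- the reflection symmetries $\sigma_t(C_v)=C_{v+t}$ forcing $\sum_i v_i=0$, and a K\"unneth-type factorisation of $\ldeg_{I^3}$ over splittings of the coordinate set --- followed by an enumeration. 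The symmetry argument is a routine extension of \cref{schnitt-lok-psi} via functoriality, and your combined use of it with the analytic vanishing actually repairs a weak point in the paper's own argument: the paper's claim that for every non-discrete partition some $\alpha(v_i,\Part)<|v_i|$ occurs is literally false (take the coarsest partition and all $v_i$ equal to one basis vector), and it is precisely your support-splitting/K\"unneth vanishing that kills those residual terms. What your approach buys is a conceptual explanation of \emph{why} $B$ consists of exactly those four configurations; what it costs is two inputs you do not supply: the K\"unneth factorisation of $\ldeg_{I^3}$ (a genuine additional lemma, not available in this paper, which you correctly flag as the main obstacle) and the explicit evaluation $\ldeg_{I^3}(\prod_{v\in T}F_v)=64$ for $T\in B$, both of which the paper obtains in one stroke from the cited computation. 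As written, the argument is a sound and arguably more robust skeleton, but it is not complete until those two facts are proved.
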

\begin{proof}
    \begin{par}
        Let $v_0, \ldots, v_3 \in \IF_2^3$.
        By \cite[Thm 4.33]{publ1} $\{v_0,v_1,v_2,v_3\} \in B$
        holds iff 
        $\ldeg_{\KC(I^3)}\left(\prod_{i=0}^3 F_{v_i}\right) \neq 0$
        and $\sum_{i=0}^3 |v_i| = 6$ hold.
        For these elements we have
        $\ldeg_{\KC(I^3)}(\left(\prod_{i=0}^3 F_{v_i}\right) = 2^6$
        and thus the term in \cref{limit-allg}
        belonging to $\Part=\{\{1\},\{2\},\{3\}\}$ is given by
        \begin{equation}
            \label{limit-expl-3-gl}
            \sum_{\substack{
                    v_0,v_1,v_2,v_3 \in \IF_2 \\
                    \{v_0,v_1,v_2,v_3\} \in B
                }}
            \prod_{i=0}^d D^{v_i}_{|v_i|}(f_i).
        \end{equation}
    \end{par}
    \begin{par}
        If $\Part$ is another partition of $\{1,2,3\}$, then there exists 
        for each $v_0, \ldots, v_3 \in \IF_2^3$ at least one $i \in \{0,1,2,3\}$
        such that
        $\alpha(v_i, \Part) < |v_i|$.
        This implies
        $D_{\alpha(v_i,\Part)}^{v_i}(f_i) = 0$ 
        since the functions are smooth.
    \end{par}
    \begin{par}
        Furthermore all functions are defined on $|\Gamma(X)^3|$ 
        and $|\Gamma(X)^3| \setminus \diag_{\{\{1\},\{2\},\{3\}\}}$
        is a zero-set, thus the claim is proven.
    \end{par}
\end{proof}
\begin{par}
    By \cref{limit-allg} one could think that the intersection pairing
    converges also for a broader set of functions. Then there is 
    a meaningful definition of positivity needed (cp. \cite{zsmpt}).
    Convergence without conditions can not be expected, 
    as the following example shows.
\end{par}
\begin{bsp}
    Let $X = \Proj{R[x_0,x_1,z]/ (x_0x_1 - z^2 \pi)}$ be the projective completion
    of the standard scheme $L$ with the usual ordering $<$ of the components of $L_s$
    and $W=W(X,<,2)$ the product model according to \cref{desi-prodkomp}.
    We identify as always $|\RK(W)| = |\Gamma(X)|^2 = [0,1]^2$
    and set
    \begin{align*}
        \varphi_n: [0,1] &\to \IR,\\
        x &\mapsto \frac{1}{2}\sum_{i=0}^n (-1)^i \max(0, \frac{1}{n} - |x-\frac{i}{n}|).
    \end{align*}
    The function $\varphi_n$ describes a triangle wave with amplitude $1$ and length
    $\frac{2}{n}$. 
    We define the following sequence of functions
    \begin{align*}
        f_{0,n} &:= \varphi_n(x), \\
        f_{1,n} &:= \varphi_n(y), \\
        f_{2,n} &:= \varphi_n(x-y).
    \end{align*}
    They are bounded and lie in $\lid(\unt_n(\Gamma))$. 
    We have however
    \[ 
        \angles{f_{0,n},f_{1,n},f_{2,n}} = n.
    \]
    By introducing a factor $f'_{i,n}:=n^{1/3}f_{i,n}$ we get functions
    which converge uniformly to $0$, but whose triple pairing is constant:
    \[ 
        \angles{f'_{0,n},f'_{1,n},f'_{2,n}} = 1.
    \]
\end{bsp}
\begin{proof}
    \begin{par}
        Obviously the differential of the function $\varphi_n$ satisfies
        \[ \varphi'_n(x) = \begin{cases}
                1  & \textrm{ if } x \in (\frac{2i}{n}, \frac{2i+1}{n}), \\
                -1 & \textrm{ if } x \in (\frac{2i+1}{n}, \frac{2i+2}{n}). 
            \end{cases}
        \]
        This gives for the generalised differentials
        \begin{align*}
            D^{(1,0)}_1(f_{0,n})(x,y) &= \varphi'_n(x), \\
            D^{(0,1)}_1(f_{1,n})(x,y) &= \varphi'_n(y), \\
            D^{(1,0)}_1(f_{2,n})(x,y) &= \varphi'_n(x-y), \\
            D^{(0,1)}_1(f_{2,n})(x,y) &= -\varphi'_n(x-y).
        \end{align*}
        Furthermore 
        \[ D^{(0,1)}_1(f_{0,n}) = D^{(1,0)}_1(f_{1,n}) = D^{(1,1)}_2(f_{i,n}) = 0. \]
        The diagonals of the $n$\nobreakdash-fold subdivision are given by the points $(x,y) \in
        [0,1]^2$ with $|x-y| = \frac{i}{n}$.
        At this point only $f_{2,n}$ has a singularity and we get
        \[ D^{(1,1)}_1(f_{2,n})(x,y) = (-1)^i \quad \text{for } |x-y| = \frac{i}{n}. \]
    \end{par}
    \begin{par}
        We may now apply the formula from \cref{limit-formel-zhang}:
        Since $D^{(1,1)}_2(f_{i,n}) = 0$ outside of the diagonal for each $i=0,1,2$,
        it suffices to calculate the singular part.
        This is given by
        \begin{align*}
            \angles{f_{0,n},f_{1,n},f_{2,n}} &= \int_{\diag}
            D^{(1,0)}_1(f_{0,n})D^{(0,1)}_1(f_{1,n})D^{(1,1)}_1(f_{2,n}) \\
            & = \int_{\diag} 1 = n.
        \end{align*}
    \end{par}
\end{proof}

    \appendix

\section{The geometric realization of simplicial sets and their subdivision}
\begin{par}
    As in \cite{publ1} we need some basic facts about simplicial sets. 
    To recall the definition and notation of 
    \emph{simplicial sets}, 
    the \emph{standard-$n$-simplex}, \emph{degenerate simplices}
    please see the appendix of \cite[Appendix A]{publ1}.
\end{par}
\begin{defn}
    \label{sk-einfach-kompl}
    Let $k \in \IN$.
    With $s_i$, for each $i \in \{0,\ldots,k\}$, the morphism
    \[ s_i: [0] \to [k], 0 \mapsto i \]
    is denoted.
    A simplicial set $\RK$ is called \emph{simplicial set without multiple simplices},
    if the map
    \[ \varphi: \coprod_{k=0}^\infty K^{\mathrm{nd}}_k \to \mathcal{P}(K_0), 
        t\in K^{\mathrm{nd}}_k \mapsto \{K(s_0)(t), \ldots, K(s_k)(t)\} \]
    is a monomorphism.
    If this is true, we denote the image of $\varphi$ by
    \[
        \RK_S := \im(\varphi) \subseteq \mathcal{P}(\RK_0).
    \]
\end{defn}
\begin{bem}
    \label{schnitt-graph-zerleg}
    Let $\Gamma$ be a graph without multiple simplices.
    By functoriality
    we identify the 1-simplices $\gamma_1 \in \Gamma_1$ with
    morphisms $i_{\gamma_1}: \Delta[1] \to \Gamma$.
    Since $\Gamma$ is without multiple simplices, the $i_{\gamma_1}$ are injective 
    for each non-degenerate 1-simplex $\gamma_1$.
    Let now $\gamma:=(\gamma_1, \ldots \gamma_d) \in (\Gamma_1^{\mathrm{nd}})^d$ 
    be a $d$-tuple of 1-simplices.
    The product 
    \[
        i_\gamma := (i_{\gamma_1} \times \cdots \times \cdots \times i_{\gamma_d}):
        I^d \to \Gamma^d
    \]
    is injective as well and denoted by $i_\gamma$. 
    By \cite[Prop 4.18]{publ1} the set of all $i_\gamma$ gives a covering of $\Gamma^d$.
\end{bem}
\begin{defn}
    Let $n \in \IN_0$.
    Then $|\Delta[n]|$ denotes the topological standard-$n$-simplex,
    i.e., the space
    \[ 
        \{(t_0, \ldots, t_n)\in \IR^{n+1} \mid \sum_{i}t_i = 1, t_i \geq 0 \}
        \subseteq \IR^{n+1}.
    \]
    Let $n, m \in \IN_0$ and $\varphi: [n] \to [m]$ be a
    morphism in the category $\Delta$. Then $\varphi$
    induces a continuous morphism
    \begin{align*}
        |\Delta[\varphi]|: |\Delta[n]| &\to |\Delta[m]|, \\
        (t_0, \ldots, t_n) &\mapsto (t'_0, \ldots t'_m)
    \end{align*}
    where $t'_j:= \sum_{\varphi(i)=j} t_i$.
    This makes $|\Delta[\cdot]|: \Delta \to \mathrm{Top}$ a covariant functor.
    For each simplicial set $\RK_\cdot$ we call the topological space
    \[
        |\RK| := \colim_{\Delta \RK_\cdot} |\Delta[n]|
    \]
    \emph{geometric realization} of $\RK_\cdot$.
    As colimit this construction is functorial in $\RK_\cdot$.
\end{defn}
\begin{par}
    For simplicial sets without multiple simplices we have the following
    more explicit description of the geometric realization:
\end{par}
\begin{prop}
    \label{sk-geo-real-koord}
    \begin{par}
        Let $\RK_\cdot$ be a simplicial set without multiple simplices.
        The geometric realization is the subspace
        $|K| \subseteq \hom_{\mathrm{set}}(\RK_0, \IR)$ consisting of the probability
        distributions on $\RK_0$ 
        (with respect to the counting measure)
        with support in a simplex of $\RK_\cdot$.
        An element of $|K|$ is therefore a function $f:\RK_0 \to [0,1]$
        with
        \begin{equation}
            \label{sk-geo-real-koord-eq}
            \sum_{v \in \RK_0}f(v) =1 \textrm{ and } 
            \supp(f) \in \RK_S 
        \end{equation}
        with $\RK_S$ defined as in \cref{sk-einfach-kompl}.
    \end{par}
    \begin{par}
        If $\varphi: \RK_\cdot \to \RK'_\cdot$ is a morphism of simplicial sets
        without multiple simplices, the induced morphism
        $\varphi_*: |K| \to |K'|$
        is given on probability distributions as follows:
        Let $f \in |K|$ be as in \cref{sk-geo-real-koord-eq}.
        Then $f'=\varphi_*(f)$ is given by the map $f':\RK'_0 \to \IR$
        with
        \[ f'(s') = \sum_{s \in \varphi_0^{-1}(s')} f(s). \]
    \end{par}
\end{prop}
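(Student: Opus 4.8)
The plan is to construct an explicit homeomorphism $\Phi$ from $|\RK|$ onto the set of probability distributions on $\RK_0$ with support in $\RK_S$, and then to read the functoriality statement off its definition. Recall that the geometric realization may be computed as the quotient
\[
    |\RK| = \Big(\coprod_{n} \RK_n \times |\Delta[n]|\Big)\big/\!\sim,
\]
and that, by the standard normal-form (Eilenberg--Zilber) lemma available from the simplicial preliminaries of \cite{publ1}, every point of $|\RK|$ has a unique representative $(x,u)$ with $x\in\RK_k^{\mathrm{nd}}$ a non-degenerate $k$-simplex and $u=(u_0,\ldots,u_k)$ an \emph{interior} point of $|\Delta[k]|$ (all $u_i>0$). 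This normal form is the structural input I would take as known. Throughout I write $v_i:=K(s_i)(x)$ for the $i$-th vertex of $x$ and keep $\varphi$ for the vertex-set map of \cref{sk-einfach-kompl}.

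The crux — and the only place the hypothesis is used — is the following lemma: if $\RK$ has no multiple simplices, then every non-degenerate $k$-simplex has $k+1$ pairwise distinct vertices. This is exactly what makes the asserted description correct; without it the coordinate-summing map below would collapse distinct interior points. To prove it I would argue by contradiction: suppose $k\ge 1$ and $v_a=v_b$ with $a<b$. Write the two faces in normal form, $d_a x=\eta_a^*z_a$ and $d_b x=\eta_b^*z_b$ with $z_a,z_b$ non-degenerate. Since applying a degeneracy only repeats vertices and leaves the underlying vertex \emph{set} unchanged, one has $\varphi(z_a)=\varphi(d_a x)$ and $\varphi(z_b)=\varphi(d_b x)$; and because $v_a=v_b$ with $b\neq a$, each of the faces $d_a x$ and $d_b x$ still contains every vertex of $x$, so $\varphi(z_a)=\varphi(x)=\varphi(z_b)$. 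Injectivity of $\varphi$ from \cref{sk-einfach-kompl} forces $z_a=z_b=:z$, but $\dim z\le k-1<k=\dim x$, so $z$ and $x$ are two distinct non-degenerate simplices with the same vertex set, contradicting that $\varphi$ is a monomorphism. Granting distinct vertices, $\varphi$ restricts to a dimension-preserving bijection between non-degenerate $k$-simplices and the $(k+1)$-element members of $\RK_S$.

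With the lemma in hand I would define $\Phi$ on normal forms by $\Phi(x,u)(w):=u_i$ if $w=v_i$ and $\Phi(x,u)(w):=0$ otherwise, which is unambiguous by distinctness of the $v_i$. Then $\sum_w\Phi(x,u)(w)=\sum_i u_i=1$, all values are $\ge 0$, and $\supp\Phi(x,u)=\{v_0,\ldots,v_k\}=\varphi(x)\in\RK_S$, so $\Phi$ lands in the claimed subspace. It is injective because the support of $\Phi(x,u)$ recovers $\varphi(x)$, hence $x$ (again by \cref{sk-einfach-kompl}), after which the weights $u_i$ are recovered as the values on the vertices; it is surjective because a distribution $f$ with $\supp(f)\in\RK_S$ has $\supp(f)=\varphi(x)$ for a unique non-degenerate $x$, whose interior barycentric coordinates are read off from the values of $f$. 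For the topology, the composite $|\Delta[k]|\to|\RK|\xrightarrow{\Phi}\hom_{\mathrm{set}}(\RK_0,\IR)$ is the affine map placing barycentric coordinates at the vertices and is therefore continuous, so $\Phi$ is continuous out of the colimit; since the reduction sets of this paper are finite, $|\RK|$ is compact and the target is Hausdorff, whence the continuous bijection $\Phi$ is automatically a homeomorphism (in general one matches the weak topologies cell by cell).

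For the functoriality statement I would use that geometric realization is a colimit and hence determined simplexwise. Given a simplicial morphism $g\colon\RK\to\RK'$ (the map denoted $\varphi$ in the statement) and a point $(x,u)$, the induced map sends it to the class of $(g_k(x),u)$; reducing $g_k(x)$ to normal form is carried out by the coordinate-summation rule $t'_j=\sum_{\eta(i)=j}t_i$ for the realization of simplicial operators introduced earlier in the excerpt. Tracing this through $\Phi$ and $\Phi'$, the $i$-th vertex $v_i$ of $x$ is sent to $g_0(v_i)$ and its weight $u_i$ is added there, which is precisely the push-forward formula $f'(s')=\sum_{s\in g_0^{-1}(s')}f(s)$. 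I expect the main obstacle to be the distinct-vertices lemma, since it is the sole point where ``no multiple simplices'' is used and is exactly what renders the clean probability-distribution description valid; the upgrade of $\Phi$ from a bijection to a homeomorphism is the secondary technical point, dispatched by compactness in the finite setting relevant here.
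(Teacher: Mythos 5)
Your proposal is correct and takes essentially the same approach as the paper: both construct the comparison map cellwise through the colimit and invert it by sending a distribution $f$ to the unique non-degenerate simplex $s$ with vertex set $\supp(f)$, weighted by its barycentric coordinates. You additionally spell out two points the paper leaves implicit --- the distinct-vertices lemma for non-degenerate simplices (correctly deduced from injectivity of the vertex-set map of \cref{sk-einfach-kompl}) and the upgrade from continuous bijection to homeomorphism, which you settle by compactness in the finite case where the paper instead asserts that the map is open.
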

\begin{proof}
    Denote the set of probability distributions by $\WV(K)$. 
    For the standard simplices $\Delta[n]$ and morphisms of standard simplices 
    the isomorphism is obvious.
    We therefore get a continuous map $|K| \to \WV(K)$. It is easy to see
    that this map is open. Therefore it is enough to show that the map
    is also bijective.
    We calculate the inverse:
    Let $f: \RK_0 \to \IR$ be a probability distribution as in \cref{sk-geo-real-koord-eq}.
    Since the simplicial set $\RK_\cdot$ has no multiple simplices,
    there is an unique non-degenerate simplex $s: \Delta[j] \to \RK_\cdot$
    with $\supp(f) = \im(s)$.
    Then there is a morphism $f': \Delta[j]_0 \to \IR$ with $f=s_*(f')$ 
    and we map $f$ onto the point $|s|(f')$.
\end{proof}
\begin{bsp}
    For the standard-1-simplex $\Delta[1]$ we have $\Delta[1]_0 \simeq \{0,1\}$
    and $\Delta[1]_S \simeq \mathcal{P}(\{0,1\})$. Thus there is a canonical
    isomorphism between the geometric realization $|\Delta[1]|$ and the 
    interval $[0,1]$.
\end{bsp}
\begin{prop}
    \label{sk-simpl-perm}
    Let $I$ denote the standard-1-simplex $I:=\Delta[1]$
    and $S_d$ the symmetric group of degree $d$.
    Then there is a canonical bijection
    \[ \psi: S_d \to (I^d)^{\mathrm{nd}}_d \]
    between non-degenerate $d$-simplices in the product $I^d$
    and $S_d$.
    The geometric realization of the simplex $\psi(\sigma)$
    is given in $I^d = [0,1]^d$ by
    \[ 
        \{ x=(x_1, \ldots, x_d) \in [0,1]^d \mid 
            x_{\sigma(1)} \leq x_{\sigma(2)} \leq \cdots \leq x_{\sigma(d)} \} .
    \]
\end{prop}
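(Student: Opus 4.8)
The plan is to make the $n$-simplices of $I^d$ completely explicit and to read off both assertions from a short counting argument. First I would recall that, since $I=\Delta[1]$, an $n$-simplex of $I$ is an order-preserving map $f\colon[n]\to[1]$, equivalently a threshold $t(f)\in\{0,\dots,n+1\}$ with $f(i)=1\iff i\ge t(f)$; hence an $n$-simplex of the product $I^d$ is a $d$-tuple $(f_1,\dots,f_d)$ of such maps, and at $n=0$ one recovers the vertex set $(I^d)_0=\{0,1\}^d$. (Equivalently $I^d$ is the nerve of the Boolean poset $[1]^d$, so that $I^d$ has no multiple simplices and \cref{sk-geo-real-koord} applies.)

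The combinatorial heart is the non-degeneracy criterion. In the product $I^d$ a simplex lies in the image of the degeneracy $s_i$ exactly when each component does, and $f_j$ lies in the image of $s_i$ iff $f_j(i)=f_j(i+1)$; therefore $(f_1,\dots,f_d)\in(I^d)_d$ is non-degenerate iff for every level $i\in\{0,\dots,d-1\}$ at least one coordinate jumps between $i$ and $i+1$. Since each monotone $f_j\colon[d]\to[1]$ has at most one jump, there are at most $d$ jumps in total, while non-degeneracy demands at least one jump at each of the $d$ levels; so there are exactly $d$ jumps, and the assignment sending each level to the unique coordinate jumping there is a \emph{bijection} $\{1,\dots,d\}\to\{1,\dots,d\}$. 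This is the permutation attached to the simplex, and conversely every permutation prescribes such a tuple. I would fix $\psi\colon S_d\to(I^d)^{\mathrm{nd}}_d$ so that the coordinate $\sigma(d-k+1)$ is the one jumping between levels $k-1$ and $k$; injectivity and surjectivity are then immediate, and $\#(I^d)^{\mathrm{nd}}_d=d!=\#S_d$ confirms the bijection.

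To identify the geometric realisation I would use that, for a simplicial set without multiple simplices, the realisation of a non-degenerate simplex is the convex hull of the images of its vertices in $|I^d|=[0,1]^d$ (\cref{sk-geo-real-koord}), each vertex $(f_1(k),\dots,f_d(k))\in\{0,1\}^d$ being sent to the same point of the cube. With the chosen convention the $k$-th vertex $w_k$ of $\psi(\sigma)$ is the indicator vector of $\{\sigma(d-k+1),\dots,\sigma(d)\}$, so $w_0=(0,\dots,0)$, $w_d=(1,\dots,1)$, and coordinate $\sigma(i)$ of $w_k$ equals $1$ iff $k\ge d-i+1$. Writing a hull point as $\sum_k\lambda_k w_k$ gives $x_{\sigma(i)}=\sum_{k\ge d-i+1}\lambda_k$, whence $x_{\sigma(1)}\le\cdots\le x_{\sigma(d)}$; conversely every $x$ with this ordering comes from the telescoping weights $\lambda_0=1-x_{\sigma(d)}$, $\lambda_k=x_{\sigma(d-k+1)}-x_{\sigma(d-k)}$ for $1\le k\le d-1$, and $\lambda_d=x_{\sigma(1)}$, which are non-negative and sum to $1$. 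This produces exactly the region in the statement.

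The monotone-map bookkeeping and the convex-hull computation are routine; the main obstacle is the non-degeneracy step together with pinning down the order convention. Concretely, I must verify that degeneracy in the product is detected by a single common level $i$, so that the ``at most $d$, at least $d$, hence exactly $d$ jumps'' argument is valid, and then orient the jump order of the maximal chain so that $\psi(\sigma)$ realises to $\{x_{\sigma(1)}\le\cdots\le x_{\sigma(d)}\}$ rather than to its reverse. Getting this alignment right is the only genuinely delicate point.
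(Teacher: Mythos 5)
Your proof is correct and follows essentially the same route as the paper: both identify $(I^d)_d$ with monotone maps $[d]\to[1]^d$, characterise the non-degenerate $d$-simplices as maximal chains in the Boolean lattice $\{0,1\}^d$ (your at-most-$d$/at-least-$d$ jump count is just the explicit form of the paper's observation that a strictly increasing chain $\varphi(0)<\cdots<\varphi(d)$ must add exactly one coordinate per step, giving a unique $\sigma\in S_d$), and then describe the realisation as the convex hull of the vertex images via \cref{sk-geo-real-koord}. You merely make explicit two points the paper leaves implicit, namely that degeneracy in the product is detected at a single common level $i$ and the telescoping barycentric weights proving the converse inclusion of the region $\{x_{\sigma(1)}\leq\cdots\leq x_{\sigma(d)}\}$, both of which check out.
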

\begin{proof}
    \begin{par}
        Since the product $I^d$ is defined component by component, we have
        \[
            I^d([n]) \simeq \prod_d \hom_{\poset}([n], [1])
            \simeq \hom_{\poset}([n],[1]^d),
        \]
        where the product $[1]^d$ is calculated in the category of sets with partial order
        (see \cite[Cor A.7]{publ1}). 
        To identify the $d$-simplices of $I^d$ we use
        \[
            (I^d)_d \simeq \hom([d], [1]^d).
        \]
        An element $\varphi \in \hom([d], [1]^d)$ is non-degenerate iff 
        $\varphi(0) < \varphi(1) < \cdots < \varphi(d)$ holds 
        with $<$ being the product ordering on $[1]^d$.
        Then there exists a unique permutation $\sigma \in S_d$ such that
        \[
            \varphi(i) = (0,\ldots,0,\underbrace{1,\ldots,1}_i)^\sigma
        \]
        holds for each $i \in \{0, \ldots, d\}$.
    \end{par}
    \begin{par}
        In the geometric realization $[0,1]^d$ the vertices of the simplex $\varphi$ 
        are given by the values of $\varphi(i)$.
        Each point of this simplex  
        is a convex combination of these points, 
        thus $x=(x_1, \ldots, x_d) \in |I^d|$ is in the simplex $\varphi$ iff
        \[
            x_{\sigma(1)} \leq \cdots \leq x_{\sigma(d)}
        \]
        holds.
    \end{par}
\end{proof}
\begin{par}
    For the description of ramified base-change we need a subdivision of simplicial sets.
    This can also described completely categorical:
\end{par}
\begin{defn}
    \mbox{}
    \label{sk-unt-def}
    \begin{enumerate}[(i)]
        \item
            Let $k \in \IN$. We denote by $\tilde \unt_k$ the functor
            \[
                \tilde\unt_k: \Delta \to \Delta 
            \]
            given on objects by
            \[ [n] \mapsto [(n+1)\cdot k - 1] \]
            and on morphisms by
            \[ \Hom_\sset([n],[m]) \ni \varphi \mapsto 
                \left( ak+b \mapsto ak+\varphi(b)
                    \textrm{ for } 0 \leq b < k
                \right).
            \]
        \item
            The functor induces by $\tilde\unt_k$
            \[ \unt_k: \sset \to \sset, \RK_\cdot \mapsto \tilde\unt_k \circ \RK_\cdot \]
            is called the \emph{$k$-fold subdivision functor}.
    \end{enumerate}
\end{defn}
\begin{prop}
    \label{sk-unt-kanon}
    For each simplicial set $X_\cdot$ there is a canonical isomorphism
    \[ \Unt_n: |\unt_n(X)| \simeq |X|. \]
    For $X_\cdot = I_\cdot = \Delta[1]$
    and with the description of the geometric realization
    of \cref{sk-geo-real-koord}
    this isomorphism is given by the mapping
    \[ \left[f: (\unt_n(I))_0 \to \IR\right] \mapsto \left[f': I_0 \to \IR \right]\]
    where
    \[ f'(0) = \sum_{i=0}^{n} \frac{1}{n} f(\varphi^k_i). \]
\end{prop}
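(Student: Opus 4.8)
The plan is to exhibit $\Unt_n$ as a natural isomorphism between two colimit-preserving functors $\sset \to \mathrm{Top}$ and thereby reduce the entire statement to the case of representable simplicial sets. First I would record that both $|{-}|$ and $|\unt_n({-})|$ preserve colimits: the realization $|{-}|$ is a colimit (equivalently a left adjoint), and the subdivision is computed levelwise by $\unt_n(X)_j = X_{(j+1)n-1}$, so it commutes with the colimits of $\sset$, which are themselves levelwise. Since every simplicial set is the canonical colimit $X = \colim_{\Delta[m]\to X}\Delta[m]$ of its representables, it then suffices to produce a homeomorphism $\Unt_n^{[m]}\colon |\unt_n(\Delta[m])| \to |\Delta[m]|$ for each $[m]$, natural in $[m]\in\Delta$; the general $\Unt_n$ is forced by functoriality.

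Second I would build $\Unt_n^{[m]}$ by an averaging rule on vertices. The vertices of $\unt_n(\Delta[m])$ are $\unt_n(\Delta[m])_0 = \Delta[m]_{n-1} = \hom_\Delta([n-1],[m])$, i.e. the monotone chains $0 \le a_0 \le \cdots \le a_{n-1}\le m$. I send such a vertex $\sigma$ to the barycentre $\tfrac1n\sum_{j=0}^{n-1} e_{\sigma(j)} \in |\Delta[m]|$ of its labels and extend affinely over each simplex; in the probability-distribution model of \cref{sk-geo-real-koord} this is $f \mapsto f'$ with $f'(w) = \tfrac1n\sum_\sigma f(\sigma)\,\#\{\,j : \sigma(j)=w\,\}$. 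For $X = I = \Delta[1]$ the $n+1$ vertices are the chains $\varphi_i$ taking the value $0$ exactly $i$ times, which land at the subdivision points $t_0 = i/n$; this is the content of the displayed formula and shows $\unt_n(I)$ is the interval cut into $n$ equal edges. Naturality in $[m]$ is immediate: a monotone $\alpha\colon[m]\to[m']$ sends the vertex $\sigma$ to $\alpha\circ\sigma$ and $e_a$ to $e_{\alpha(a)}$, so both routes around the naturality square agree on vertices, hence everywhere by affineness.

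Third I would prove that each $\Unt_n^{[m]}$ is a homeomorphism. It is continuous, being affine on every simplex of the finite complex $\unt_n(\Delta[m])$, and its source and target are compact Hausdorff, so it is enough to check bijectivity. Here I would use cumulative coordinates: writing a point of $|\Delta[m]|$ as $(t_0,\dots,t_m)$ with partial sums $0 = s_{-1}\le s_0 \le \cdots \le s_{m-1}\le s_m = 1$, the grid values $i/n$ cut the staircase $(s_0,\dots,s_{m-1})$ into exactly one cell of the edgewise subdivision, and reading off this cell produces a continuous two-sided inverse. Equivalently, the top nondegenerate $m$-simplices of $\unt_n(\Delta[m])$ have images with pairwise disjoint interiors tiling $|\Delta[m]|$, so $\Unt_n^{[m]}$ is a bijection of open top cells, hence a continuous bijection.

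I expect the bijectivity in the last step to be the main obstacle: the colimit-preservation, the co-Yoneda reduction, and naturality are all formal, whereas injectivity of the averaging map is exactly the assertion that the edgewise subdivision genuinely tiles the simplex. I would settle it with the cumulative-coordinate inverse above, which makes the staircase combinatorics explicit and simultaneously confirms that $\unt_n(\Delta[m])$ carries no multiple simplices, so that \cref{sk-geo-real-koord} applies and the probability-distribution description of $\Unt_n$ is literally the one recorded for $X=I$.
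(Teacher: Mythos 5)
Your proposal is correct, but it takes a genuinely different route from the paper: the paper does not prove this proposition at all, it simply cites \cite[Lemma 1.1]{boek} (the edgewise-subdivision lemma of B\"okstedt--Hsiang--Madsen), whereas you reconstruct that proof in full. Your argument is in substance the standard one from the cited source: reduce by co-Yoneda and colimit-preservation of both $|{-}|$ and $|\unt_n({-})|$ (the latter because $\unt_n$ is levelwise, $(\unt_n X)_j = X_{(j+1)n-1}$) to the representables; define $\Unt_n^{[m]}$ on a vertex $\sigma\colon[n-1]\to[m]$ as the barycentre $\tfrac1n\sum_j e_{\sigma(j)}$, which is exactly the restriction to vertices of the diagonal map $|\Delta[j]|\to|\Delta[(j+1)n-1]|$, $(t_0,\dots,t_j)\mapsto\tfrac1n(t_0,\dots,t_j,\dots,t_0,\dots,t_j)$, underlying the reference's homeomorphism; and verify bijectivity via cumulative coordinates, which is the Edelsbrunner--Grayson staircase argument that edgewise subdivision genuinely tiles the simplex. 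What your route buys is self-containedness and an explicit inverse; what the paper's citation buys is brevity and naturality statements already packaged in the literature. Two small points in your favour: your computation for $X=I$ (vertex $\varphi_i$ taking the value $0$ exactly $i$ times lands at $t_0=i/n$, so $f'(0)=\sum_i \tfrac{i}{n}f(\varphi_i)$) silently corrects what is evidently a typo in the paper's displayed formula, where the weight is printed as $\tfrac1n$ rather than $\tfrac{i}{n}$ and the notation $\varphi^k_i$ is left undefined. One point to tighten: your alternative phrasing ``bijection of open top cells, hence a continuous bijection'' is too quick, since bijectivity on the open $m$-cells says nothing about the lower-dimensional strata; but this does not damage the proof, because your primary argument --- the cumulative-coordinate two-sided inverse, combined with compact Hausdorff source and target --- already settles bijectivity (indeed homeomorphy) on the nose, boundaries included.
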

\begin{proof}
    The construction of this mapping is given
    in \cite[Lemma 1.1]{boek}.
\end{proof}
\begin{prop}
    \label{sk-unt-kanon-einb}
    Let $\RK_\cdot$ be a simplicial set and
    $t: \Delta \to \unt_n(K)$ a simplex of $\unt_n(K)$.
    Then there is a simplex $s: \Delta \to \RK_\cdot$ in $\RK_\cdot$
    such that the image of $|t|$ under the canonical morphism
    \[ |\unt_n(K)| \simeq |K| \]
    lies completely in $\im(|s|)$.
\end{prop}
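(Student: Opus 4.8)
The plan is to exploit the fact that a simplex of $\unt_n(K)$ is, by the very definition of the subdivision functor, already a higher-dimensional simplex of $K$, and that the canonical homeomorphism of \cref{sk-unt-kanon} is natural in $K$; the desired $s$ will then simply be that higher-dimensional simplex. First I would unwind \cref{sk-unt-def}: since $\unt_n(K) = K \circ \tilde\unt_n^{\,\mathrm{op}}$, an $m$-simplex of $\unt_n(K)$ is literally an element of $K_{(m+1)n-1}$. Writing $N := (m+1)n-1$, the given simplex $t\colon \Delta[m] \to \unt_n(K)$ corresponds under Yoneda to the element $\hat t := t_m(\mathrm{id}_{[m]}) \in (\unt_n K)_m = K_N$, i.e.\ to an $N$-simplex $\hat t\colon \Delta[N] \to K$ of $\RK_\cdot$. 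I claim this $\hat t$ is the simplex $s$ we seek.

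The next step is a clean factorization of $t$. Let $\iota\colon \Delta[m] \to \unt_n(\Delta[N])$ be the tautological simplex corresponding to $\mathrm{id}_{[N]} \in (\Delta[N])_N = (\unt_n \Delta[N])_m$. Evaluating on $\mathrm{id}_{[m]}$ and using that $\unt_n$ sends the morphism $\hat t$ to $\hat t_N$ in simplicial degree $m$, one checks $t = \unt_n(\hat t) \circ \iota$, and hence after realization $|t| = |\unt_n(\hat t)| \circ |\iota|$. Now I would invoke naturality of the canonical isomorphism of \cref{sk-unt-kanon}, writing $\Unt_n^K$ and $\Unt_n^{\Delta[N]}$ for its components: applied to $\hat t\colon \Delta[N] \to K$ this gives $\Unt_n^K \circ |\unt_n(\hat t)| = |\hat t| \circ \Unt_n^{\Delta[N]}$, whence $\Unt_n^K \circ |t| = |\hat t| \circ \Unt_n^{\Delta[N]} \circ |\iota|$. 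Since $\Unt_n^{\Delta[N]} \circ |\iota|$ takes values in $|\Delta[N]|$, the image of $\Unt_n^K \circ |t|$ lies inside $|\hat t|\bigl(|\Delta[N]|\bigr) = \im(|\hat t|) = \im(|s|)$, which is exactly the claim. Note that no coordinate computation on higher standard simplices is required: once the factorization and naturality are in place the argument is purely formal, and the fact that $|\Delta[N]|$ is itself a single closed simplex is what makes the target simplex $s=\hat t$ available for free.

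The one genuine point of care, and the step I expect to be the main obstacle, is justifying that the homeomorphism of \cref{sk-unt-kanon} is \emph{natural} in $K$: the excerpt states it objectwise and spells it out explicitly only for $\Delta[1]$. I would read naturality off its construction in \cite[Lemma 1.1]{boek}, where the isomorphism arises from a fixed natural transformation of cosimplicial spaces and is therefore natural after passing to the colimit defining $|\cdot|$; alternatively, if a self-contained argument is preferred, I would verify the commuting naturality square on the representable simplicial sets $\Delta[j]$ (using the explicit formula already available for the building block $\Delta[1]$ together with the product/functorial description) and extend to arbitrary $K$ by functoriality of geometric realization. With naturality secured, the proof is immediate from the two paragraphs above.
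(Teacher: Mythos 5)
Your proof is correct and is essentially the paper's own argument: the paper likewise takes $s$ to be the image of $t$ under the isomorphism $(\unt_n(K))_m \simeq \RK_{(m+1)n-1}$, factors $t$ through the tautological simplex $\Delta[m] \to \unt_n(\Delta[N])$ (your $\iota$, the paper's $\tilde t$), and concludes from the commuting square expressing naturality of $\Unt_n$ applied to $\hat t$. The only difference is one of care: you explicitly flag and justify the naturality of the canonical isomorphism, which the paper asserts via its final diagram without comment (and in which the middle term $\unt_k(\Delta[i])$ is evidently a typo for $\unt_k(\Delta[(i+1)k-1])$, as your formulation makes clear).
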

\begin{proof}
    Let $i \in \IN$ such, that $t \in (\unt_k(K))_i$ holds. 
    By the definition of the subdivision we have 
    $(\unt_k(K))_i \simeq \RK_{(i+1)k - 1}$ 
    and we choose $s \in \RK_{(i+1)k - 1}$ as image of $t$ 
    under this isomorphism.
    Then $t$ allows a factorisation of the form
    \[ 
        t: \Delta[i] \xrightarrow{\tilde t} \unt_k(\Delta[i]) \xrightarrow{\unt_k(s)} \unt_k(K) 
    \] 
    and therefore the following diagram commutes:
    \[
        \begin{CD}
            |\Delta[i]| @. \\
            @V|\tilde t| VV @. \\
            |\unt_k(\Delta[i])| @>\simeq >> |\Delta[(i+1)k-1]| \\
            @V |\unt_k(s)| VV @V |s| VV  \\
            |\unt_k(K)| @>\simeq>> |K|.
        \end{CD}
    \]
    This finishes the proof.
\end{proof}

    \bibliography{literatur}
\end{document}